\documentclass{amsart}
\usepackage{graphicx} % Required for inserting images
\usepackage{amsfonts, amsmath, amssymb, amsthm}
\usepackage{xcolor}
\usepackage{mathrsfs}
\usepackage{tcolorbox}
\usepackage{hyperref}
\usepackage{stix}
\usepackage{soul}
\usepackage[headheight=125pt,top=1.4in, bottom=1.4in, left=1.2in, right=1.2in]{geometry}
\title[Characterization of Fourier transform]{Characterization of Fourier transform of Hilbert space Valued functions}
\author[Achar, Biswas, and Manna]{Satyajyoti Achar, Md Hasan Ali Biswas and Ramesh Manna}
\address{School of Mathematical Sciences, National Institute of Science Education and
Research, Bhubaneswar, An OCC of Homi Bhabha National Institute, Jatni 752050, India.}
\email{satyajyoti.achar@niser.ac.in} 
\address{Department of Mathematics, Indian Institute of Science, Bangalore - 560012, India}
\email{mdhasanalibiswas4@gmail.com}
\address{School of Mathematical Sciences, National Institute of Science Education and
Research, Bhubaneswar, An OCC of Homi Bhabha National Institute, Jatni 752050, India.}
\email{rameshmanna@niser.ac.in}
\subjclass[2020]{Primary 46F10, 42B10; Secondary 46E40}

\date{\today}

\keywords{Derivations, Hilbert space valued Fourier transform, Schwartz space of rapidly decreasing functions, Tempered distributions, Separable Hilbert space}
\date{\today}
\begin{document}
\newcommand{\R}{\mathbb{R}}
\newcommand{\Hsn}{\mathbb{H}}
\newcommand{\Sl}{\mathcal{L}}
\newcommand{\A}{\mathcal{A}}
\newcommand{\F}{\mathscr{F}}
\newcommand{\N}{\mathbb{N}}
\newcommand{\Hi}{\mathcal{H}}
\newcommand*{\bigchi}{\mbox{\Large$\chi$}}
\newcommand{\Hm}{\mathrm{H}}
\newcommand{\blp}{\big (}
\newcommand{\brp}{\big )}
\newcommand{\Z}{\mathbb{Z}}
\newcommand{\Za}{\mathscr{Z}}
\newcommand{\T}{\mathbb{T}}
\newcommand{\J}{\mathbf{J}}
\newcommand{\C}{\mathbb{C}}
\newcommand{\diff}{\mathrm{d}}
\newcommand{\Df}{{\mathcal{F}_T}}
\newcommand{\D}{\mathcal{D}}
\newcommand{\B}{\mathcal{B}}
\newcommand{\M}{\mathcal{M}}
\newcommand{\G}{\mathcal{G}}
\newcommand{\nc}{\circledast_A}
\newcommand{\tc}{\otimes_\theta^\lambda}
\newcommand{\im}{\mathrm{Im}}
\newcommand{\re}{\mathrm{Re}}
\newcommand{\ch}{\mathcal{C_\theta}}
\newcommand{\nch}{\mathcal{C_{-\theta}}}
\newcommand{\f}{\mathcal{F}}
\def\Msp{{\boldsymbol M}}
\newtheorem{theorem}{Theorem}[section]
\newtheorem{lemma}[theorem]{Lemma}

\theoremstyle{definition}
\newtheorem{definition}[theorem]{Definition}
\newtheorem{example}[theorem]{Example}
\newtheorem{OP}[theorem]{Open problem}
\newtheorem{AP}[theorem]{Appendix}
\newtheorem{corollary}[theorem]{Corollary}
\newtheorem{xca}[theorem]{Exercise}
\newtheorem{propn}[theorem]{Proposition}
\theoremstyle{remark}
\newtheorem{remark}[theorem]{Remark}

\numberwithin{equation}{section}

\begin{abstract}
    The aim of this paper is to obtain the characterization of Fourier transform for functions taking values in a separable Hilbert space $H$, defined on the groups $G=\R^n,\,G=\Z/n\Z$. In the first part of the paper, we consider the Fourier transform as a bijective mapping on $S_H$, the space of all $H$-valued smooth rapidly decaying functions on $\R^n$, which naturally extends bijectively to the dual space $S_H'$. Within this framework, we characterize the Fourier transform in terms of the convolution theorem, supplemented by additional structural properties of the Fourier transform. In the second part of the paper, we consider the discrete setting, where the Fourier transform is an operator on $L^1(\Z/n\Z, H)$, and we provide corresponding characterization based on its essential properties. 
    In contrast to the classical setting, a key assumption in our framework involves the interplay between two distinct convolution operations. As a consequence, the structure of our proof diverges notably from the classical case and introduces additional complexities.
\end{abstract}
\maketitle

\section{Introduction and Preliminaries}
Fourier transform is the central object in Harmonic Analysis. Its profound influence extends across numerous domains, including differential equations, number theory, and quantum mechanics, where its applications continue to shape and inspire new developments. There are a group of arguments by many mathematicians to explain why the Fourier transform is so important. 
The reason is that the Fourier transform is the only operator that meets the specified set of properties. This approach is referred to as ``Axiomatic'', see \cite{krukowski}. The Fourier transform possesses a rich collection of remarkable properties. This raises a natural and intriguing question: Are there certain properties that determine the Fourier transform uniquely among all operators?

\par It is well known that the Fourier transform $\F$ is a bijection on $S(\R^n)$, the Schwartz space of rapidly decreasing functions on $\R^n$, and it has a bijective extension on $S^\prime(\R^n)$, the space of all tempered distributions on $\R^n$. We refer to \cite{gel1964fand} for a study of the theory of distributions on $\R^n$. It is well known that 
\begin{align}
\label{eq: conv pointwise}\F(f\ast g)&=\F(f)\cdot\F(g)\\
\label{eq: pointwise conv}\F(f\cdot g)&=\F(f)\ast \F(g),
\end{align}
holds for all $f,g\in S(\R^n)$, where $f\ast g(x)=\int_{\R^n} f(x-y)g(y)dy$. The authors in \cite{Alesker_paper} (see also \cite{Alesker_BC}) proved that the bijectivity of the Fourier transform on $S(\R^n)$, $S^\prime(\R^n)$ and \eqref{eq: conv pointwise}, \eqref{eq: pointwise conv}, determines the Fourier transform upto some extent. We provide the precise statement below.
\begin{theorem}[\cite{Alesker_paper, Alesker_BC}]\label{th: alesker}
 Let $T:S(\R^n)\to S(\R^n)$ be a bijective mapping which admits a bijective extension $T^\prime:S^\prime(\R^n)\to S^\prime(\R^n)$ such that
 \begin{align*}
     T^\prime(f\ast g)&=T^\prime(f)\cdot T(g),~~\, \text{ for all }f\in S^\prime(\R^n), g\in S(\R^n),\\
      T^\prime(f\cdot g)&=T^\prime(f)\ast T(g),~~\, \text{ for all }f\in S^\prime(\R^n), g\in S(\R^n).
 \end{align*}
 Then, either $T(f)=\F(f\circ B)$ or $T(f)=\overline{\F(f\circ B)}$, holds for all $f\in S(\R^n)$, for some $B\in GL_n(\R)$ with $|det(B)|=1$.
\end{theorem}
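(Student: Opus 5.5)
The plan is to conjugate away the Fourier transform and classify what is left. Set $S=\F^{-1}\circ T$ on $S(\R^n)$ and $S'=\F^{-1}\circ T'$ on $S'(\R^n)$. Both are bijections, and $S'$ extends $S$.

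\textbf{Step 1: translate the hypotheses.} Using \eqref{eq: conv pointwise} and \eqref{eq: pointwise conv}, the two identities become
\[
S'(f\ast g)=S'(f)\ast S(g),\qquad S'(f\cdot g)=S'(f)\cdot S(g),
\]
for $f\in S'(\R^n)$ and $g\in S(\R^n)$. So $S$ respects both products, at least when one factor is a Schwartz function.

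\textbf{Step 2: show $S$ is multiplicative and unital on $C^\infty$.} Take $f=1$ in the second identity. Then $S(g)=S'(1)\cdot S(g)$ for every $g$. Since $S$ is onto $S(\R^n)$, the smooth function $S'(1)$ equals $1$ wherever some Schwartz function is nonzero, so $S'(1)=1$.

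Similarly, take $f=\delta$ in the first identity. Then $S(g)=S'(\delta)\ast S(g)$ for all $g$, and surjectivity gives $S'(\delta)=\delta$.

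Next, compute $S'(e^{i\langle\xi,\cdot\rangle})$. The $e^{i\langle\xi,x\rangle}$ are exactly the nonzero $h$ with $h\ast g = \widehat g(\xi)\,h$ (up to normalization) for all $g$. From this I will show $S'$ permutes exponentials up to scalars. Combining with the multiplicative identity, the scalars form characters, and one gets
\[
S'(e^{i\langle\xi,\cdot\rangle})=e^{i\langle\phi(\xi),\cdot\rangle}
\]
for a bijection $\phi$ of $\R^n$ that is additive. Here I use that $e_\xi\cdot e_\eta=e_{\xi+\eta}$, approximating exponentials by Schwartz functions times cutoffs.

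Dually, $S'(\delta_a)=c(a)\delta_{\psi(a)}$ with $\psi$ a bijection. The deltas are characterized by $\delta_a\cdot g=g(a)\delta_a$.

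\textbf{Step 3 (main obstacle): pin down $\psi$ and $\phi$.} I expect to derive
\[
S(g)(\psi(a))\,c(a)=g(a)\,c(a),
\]
so $S(g)=\lambda(g\circ\psi^{-1})$, where $\lambda$ is a field automorphism of $\C$. This is forced by the multiplicativity together with $S'(1)=1$, which give a ring automorphism acting on constants, and by additivity.

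Smoothness and continuity then force two conclusions. First, $\lambda$ is either the identity or complex conjugation, because $S$ maps Schwartz functions to Schwartz functions and constants times bumps must stay smooth. Second, $\psi$ is a diffeomorphism preserving the algebra $S(\R^n)$.

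Finally, apply the convolution identity. Composition with $\psi^{-1}$ commuting with convolution, $(g\circ\psi^{-1})\ast(h\circ\psi^{-1})=(g\ast h)\circ\psi^{-1}$, forces $\psi$ to be linear with $|\det|=1$, by testing translations via deltas: $S'(\delta_a\ast\delta_b)=S'(\delta_{a+b})$. That gives additivity of $\psi$, and hence linearity by continuity. Unwinding the conjugation yields $T(f)=\F(f\circ B)$ or its conjugate.

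The hardest part is ruling out wild field automorphisms. I would do this by showing $S$ is real-linear on constants through $S'(c)=\lambda(c)$ and the continuity of $c\mapsto\lambda(c)$. Continuity should come from closedness of the graph, since $S'(c\cdot g)=\lambda(c)S(g)$ must stay tempered.
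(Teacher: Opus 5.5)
Your Step 1 is exactly the reduction the paper indicates: Theorem~\ref{th: alesker} follows from Theorem~\ref{th: 1.2} by composing with $\F$. Your constants/deltas/scalar-map outline is also the scheme the paper follows in proving Theorem~\ref{th: main th r 1}. The genuine gap is the scalar map.

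You assert it is a field automorphism ``by additivity'', but $S$ is not assumed additive, and nothing you derive makes it additive. What you can actually prove is weaker:
\begin{itemize}
\item $S'$ sends constants to constants. Write $c=1\ast(cg)$ with $\int g=1$, so $S'(c)=1\ast S(cg)$ is a constant $\beta(c)$.
\item $S(cg)=\beta(c)S(g)$ makes $\beta$ a multiplicative bijection of $\C$ with $\beta(1)=1$.
\item Evaluating $S'(\delta_a\cdot g)$ directly, and again via $\delta_a\cdot(g(a)\phi)$ with $\phi(a)=1$, gives $S(g)(\psi(a))=\beta(g(a))$.
\end{itemize}
Your displayed relation $S(g)(\psi(a))c(a)=g(a)c(a)$ is wrong. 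It is also inconsistent with the $\lambda$ you then introduce.

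The continuous multiplicative bijections of $\C$ are exactly $c\mapsto (c/|c|)^{\pm1}|c|^{\alpha}$ with $\mathrm{Re}\,\alpha>0$. For each of these, $g\mapsto\beta\circ g\circ B^{-1}$ is multiplicative. So the real obstruction is this continuous non-additive family, not wild automorphisms, and your arguments do not touch it:
\begin{itemize}
\item ``Constants times bumps stay smooth'' is vacuous, since $\beta(c)S(g)$ is smooth whatever $\beta(c)$ is.
\item ``$\lambda(c)S(g)$ stays tempered'' gives no continuity in $c$.
\end{itemize}
Continuity of $\beta$ should come from $S(g)\circ\psi=\beta\circ g$ restricted to lines, once $\psi$ is known to be linear (as in Step 8 of the paper). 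Then $\alpha=1$ must be extracted from the convolution identity, which you use only for $\psi$. With $S(g)=\beta\circ g\circ B^{-1}$ it gives $\beta\circ(g\ast h)=|\det B|\,(\beta\circ g)\ast(\beta\circ h)$. Dilating nonnegative $g,h$ then yields $\lambda^{n\alpha}=\lambda^n$ for all $\lambda>0$, so $\alpha=1$ and then $|\det B|=1$. The paper's Step 9 is an integral comparison of the same kind. A genuine two-sided smoothness argument, applying $S$ and $S^{-1}$ to functions vanishing to first order, would also work, but that is not the argument you give.

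The delta step has the same defect. Applying $S'$ to $\delta_a\cdot g=g(a)\delta_a$, or to $e_\xi\ast g=\widehat g(\xi)e_\xi$, only gives $S'(\delta_a)\cdot S(g)=S'(g(a)\delta_a)$. Without linearity you have no relation between $S'(g(a)\delta_a)$ and $S'(\delta_a)$, so these eigen-characterizations do not transfer. What works (the paper's Steps 3--6) is the following.
\begin{itemize}
\item Show $\operatorname{supp}S'(\delta_a)$ is a single point. Two points would give $g_y,g_z$ with disjoint supports and $S'(\delta_a)\cdot g_y\neq0\neq S'(\delta_a)\cdot g_z$. Pulling back gives $S^{-1}g_y(a)\neq0\neq S^{-1}g_z(a)$, hence $g_yg_z=S(S^{-1}g_y\,S^{-1}g_z)\neq0$, a contradiction.
\item Then $S'(\delta_a)$ is a finite combination of derivatives of $\delta_{\psi(a)}$, and you must still kill the derivative terms, which you never address. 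One way is to write $\delta_a$ as the $N$-fold convolution of $\delta_{a/N}$ with $N$ larger than the order of $S'(\delta_a)$.
\item Both this and additivity of $\psi$ need $S'(\delta_a\ast\delta_b)=S'(\delta_a)\ast S'(\delta_b)$. This is not a hypothesis, since $\delta_b\notin S(\R^n)$. It follows from $S'(\delta_{a+b})\ast S(g)=S'(\delta_a)\ast S(\delta_b\ast g)=S'(\delta_a)\ast S'(\delta_b)\ast S(g)$ together with surjectivity of $S$.
\item Linearity of $\psi$ then comes from additivity plus local boundedness: $|S\phi|=1$ on $\psi(W)$ when $\phi=1$ on $W$. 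It does not come from your diffeomorphism claim, which is unjustified.
\end{itemize}
The exponential computation is never used and can be dropped.
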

The above theorem is obtained as a consequence of the following
    \begin{theorem}[\cite{Alesker_paper,Alesker_BC}]\label{th: 1.2}
    Let $U:S(\R^n)\to S(\R^n)$ be a bijective mapping having a bijective extension $U^\prime:S'(\R^n)\to S'(\R^n) $ such that
    \begin{align*}
    U'(f\ast g)=U'(f)\ast U(g)~~~\text{ and }~ U'(f\cdot g)&=U'(f)\cdot U(g),  
\end{align*}     
for all $f\in S'(\R^n),\,g\in S(\R^n)$. Then, either $ T(f)=f\circ B$ or $ T(f)= \overline{f\circ B}$, for some $B\in GL_n(\R)$ with $|det(B)|=1$. 
    \end{theorem}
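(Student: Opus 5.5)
We prove Theorem \ref{th: 1.2} (where $T$ in the conclusion means $U$) by first showing that $U$ is an automorphism of the pointwise algebra, then of the convolution algebra, and then that it comes from a measure-preserving linear change of variables, possibly followed by complex conjugation.

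\emph{Additivity.} We first show that $U'$ is additive. Take $f_1,f_2\in S'(\R^n)$. By surjectivity of $U$ choose $g\in S(\R^n)$ with $U(g)$ equal to a fixed test function $\varphi$. Then
\[
U'(f_1\cdot g)+U'(f_2\cdot g)=\bigl(U'(f_1)+U'(f_2)\bigr)\cdot\varphi .
\]
We compare this with $U'((f_1+f_2)\cdot g)=U'(f_1+f_2)\cdot\varphi$. The difficulty is that additivity of $U'$ is exactly what is being proved, so this comparison is circular as it stands. Instead, we use the two identities to show that $U'$ maps the multiplicative identity $1$ to $1$ and the convolution identity $\delta$ to $\delta$. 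From $U'(1\cdot g)=U'(1)U(g)$ we get $U(g)=U'(1)U(g)$ for all $g$, and since $U(S(\R^n))=S(\R^n)$ this forces $U'(1)=1$. In the same way, $U'(\delta)\ast\varphi=\varphi$ for all $\varphi$ gives $U'(\delta)=\delta$. We then characterize sums intrinsically: $h=f_1+f_2$ exactly when $h\cdot g=f_1 g+f_2 g$ for all $g$. Since this only involves products, we still need additivity on $S$ to transport it. The way out is that $U$ restricted to $S(\R^n)$ is a bijective ring map for both products, $U(fg)=U(f)U(g)$ and $U(f\ast g)=U(f)\ast U(g)$, provided additivity is known. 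So additivity is the key first step, and for it we follow Alesker--Artstein-Avidan--Faifman--Milman. The pointwise multiplicative structure of $S(\R^n)$ determines points via maximal ideals or idempotent-like zero sets. Hence $U(f)(x)=\Phi(x,f(\Phi^{-1}\ldots))$, that is, $U(f)(x)=\psi_x(f(\beta(x)))$ for a bijection $\beta$ of $\R^n$ and multiplicative bijections $\psi_x$ of $\C$.

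\emph{Localization and the multiplicative step.} Concretely, define the support of $f$ multiplicatively: $f\cdot g=0$ iff the supports are disjoint. $U$ preserves the relation $f g=0$, hence preserves the lattice of zero sets, which yields a homeomorphism $\beta$ with $U(f)(x)$ depending only on $f(\beta(x))$. The identity $U(fg)=U(f)U(g)$ then gives $U(f)(x)=\psi_x(f(\beta(x)))$ with $\psi_x:\C\to\C$ multiplicative and bijective. We verify this first on $S$ and then extend it to $S'$ using $U'(f\cdot g)=U'(f)U(g)$ with $g$ ranging over $S$.

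\emph{Convolution step, and the main obstacle.} Next we use $U(f\ast g)=U(f)\ast U(g)$. We test it with Gaussians and with approximate identities $g_\epsilon\to\delta$, whose images converge to $U'(\delta)=\delta$. The convolution identity forces $\psi_x$ to be additive, since convolution mixes values by integration. It also forces $\psi_x$ to be continuous, because $U$ sends $S$ to $S$, so $\psi_x$ is the identity or complex conjugation, and this choice is constant in $x$ by continuity. It further forces $\beta$ to intertwine translations: $\beta(x+y)-\beta(y)$ is independent of $y$, so $\beta$ is affine; preserving $\delta$ and the identity makes $\beta=B$ linear. Finally, the Jacobian factor coming from a change of variables in $\int f(Bx-By)g(By)\,dy$ must equal $1$, giving $|\det B|=1$. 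The main obstacle is extracting additivity and continuity of $\psi_x$ from the convolution identity without assuming linearity. For this we first plug in $f,g$ with $g$ a bump concentrated near a point, which turns the convolution into a near-pointwise identity, and then vary the bumps to get a Cauchy-type functional equation for $\psi_x$.
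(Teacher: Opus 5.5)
\textbf{There is a genuine gap.} As written, the proposal is an outline whose load-bearing steps are either asserted or argued by limits the hypotheses do not allow. The paper does not reprove this cited result. It only notes that Alesker et al.\ work with the images of $0$, $1$ and the point masses $\delta_x$, which its own Steps 1--6 of Theorem \ref{th: main th r 1} imitate.

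\textbf{Where the argument breaks.}

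First, the additivity paragraph is circular, as you notice, and it is also unnecessary. Additivity of $U$ is a conclusion, never an input.

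Second, the pointwise representation $U(f)(x)=\psi_x(f(\beta(x)))$ is asserted, not proved. Note that $fg=0$ encodes disjointness of the open sets $\{f\neq 0\}$ and $\{g\neq 0\}$, not of the supports. Passing from preservation of this relation to ``$U(f)(x)$ depends only on $f(\beta(x))$'' is exactly the hard part. In the cited argument it is done through $S'$:
\begin{itemize}
\item $U'(\delta_y)$ has one-point support, so it equals $\sum_{|\alpha|\le M}\gamma_\alpha D^\alpha\delta_{u(y)}$;
\item the identity $\delta_x*\delta_y=\delta_{x+y}$ makes $u$ additive;
\item writing $\delta_x$ as an $(M+1)$-fold convolution of $\delta_{x/(M+1)}$ kills the derivative terms;
\item only then does $\delta_y\cdot f=f(y)\delta_y$ yield the pointwise dependence.
\end{itemize}

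Third, the convolution step uses $U(g_\varepsilon)\to U'(\delta)=\delta$ for an approximate identity. This is unavailable: neither $U$ nor $U'$ is assumed continuous. Your ``near-pointwise identity'' and ``Cauchy-type equation'' from bumps have the same defect, since they need continuity of $\psi_x$, which you derive only afterwards. Moreover, continuity of $x\mapsto\psi_x(f(\beta(x)))$ gives no continuity of a fixed $\psi_{x_0}$ unless you already know that $\psi_x$ is independent of $x$. Yet you obtain that independence last, ``by continuity''.

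\textbf{How to repair it, granting the pointwise form.} Replace the limits by exact identities.
\begin{itemize}
\item Since $\lambda\delta_0*g=\lambda g$, you have $U'(\lambda\delta_0)*h=m_\lambda h$ for all $h\in S(\R^n)$, with $m_\lambda(x)=\psi_x(\lambda)$. A convolution operator commutes with translations, so $m_\lambda$ is constant and $\psi_x=\psi$.
\item It follows that $U'(c)=\psi(c)$ for constants $c$. The identity $U'(1*g)=U'(1)*U(g)$ then reads $\psi\bigl(\int g\bigr)=\int\psi(g(\beta(x)))\,dx$.
\item Splitting $g=g_1+g_2$ with disjoint supports and prescribed integrals gives additivity of $\psi$, with no approximation.
\item Continuity of $\psi$ then follows from $U(S)\subseteq S$. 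It is genuinely needed, because additive and multiplicative alone allows wild automorphisms of $\C$. Hence $\psi$ is the identity or conjugation.
\item Likewise $U'(\delta_a)*h=h\circ\beta^{-1}(\beta(\cdot)-a)$, and translation invariance makes $\beta^{-1}$ affine. Finally, $U(f*g)=U(f)*U(g)$ forces $\beta(0)=0$ and $|\det B|=1$.
\end{itemize}

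\textbf{Alternative.} You could invoke the product-preserving theorem of \cite{alesker2010characterization} in full. It already gives $Uf=f\circ u$ or $\overline{f\circ u}$ with $u$ a diffeomorphism, and convolution then quickly makes $u$ linear with $|\det|=1$. In that case your $\psi_x$ analysis is superfluous, and the hard pointwise step is imported rather than proved.
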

It is important to note that, in the above theorem, no assumption are made regarding the linearity and continuity of the mapping $T$. Here, we would like to mention that in the proof of Theorem \ref{th: 1.2}, the authors made use of the action of $T$ on the elementary distributions such as the constant functions $0, 1$ and the Kronecker delta distributions.
\par 
Subsequently in \cite{alesker2010characterization}, the authors have shown that a bijective mapping $T$ on $S(\R^n)$ having a bijective extension $T'$ on $S'(\R^n)$ satisfying $T'(f\cdot g)=T'(f)\cdot T(g)$, for all $f\in S'(\R^n),\,g\in S(\R^n)$ is of the form $Tf=f\circ u$ or $Tf=\overline{f\circ u}$, where $u$ is a $C^\infty$-diffeomorphism on $\R^n$. As a consequence of the form of multiplication preserving operators, the authors obtained a characterization of the Fourier transform on $S(\R^n)$. Later in \cite{artstein2012multiplicative}, similar results were proved for functions defined on a manifold.

\par It is easy to see that the Fourier transform maps $L^1(G)$ into $C_0(G)$ boundedly, where $G$ is a locally compact abelian group. Further, appealing to Fubini's theorem, it is easy to see that the Fourier transform satisfies the equality \eqref{eq: conv pointwise} for $f,g\in L^1(\R^n)$, more generally for $f,g\in L^1(G)$, where $G$ is a locally compact abelian group. In \cite{Jaming}, Jaming proved that the boubdedness of the Fourier transform along with \eqref{eq: conv pointwise} for $f,g\in L^1(G)$ characterizes the Fourier transform to some extent when $G$ is taken to  be $\R^n$ or $\mathbb{T}^n$. We state his result for the case $G=\R^n$.

\begin{theorem}[\cite{Jaming}]\label{th: jaming}
    Let $T:L^1(\R^n)\to C_0(\R^n)$ be a bounded linear operator such that
    \[
    T(f\ast g)=T(f)\cdot T(g),~~\text{ for all } f,g\in L^1(\R^n).
    \]
    Then there exists $E\subset \R^n$ and $\phi:\R^n\to\R^n$ such that
    \[
    T(f)(\xi)=\chi_E(\xi)\F(f)(\phi(\xi)), \,\, \xi\in\R^n,
    \]
    holds for all $f\in L^1(\R^n)$.
\end{theorem}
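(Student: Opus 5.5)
The plan is to turn $T$ into a family of multiplicative functionals and then identify each one with a character of $\R^n$. For each $\xi\in\R^n$, define $\tau_\xi:L^1(\R^n)\to\C$ by $\tau_\xi(f)=T(f)(\xi)$. Since $T$ is bounded into $C_0(\R^n)$ with the sup norm, each $\tau_\xi$ is a bounded linear functional with $|\tau_\xi(f)|\le\|T\|\,\|f\|_1$. The hypothesis $T(f\ast g)=T(f)\cdot T(g)$ gives $\tau_\xi(f\ast g)=\tau_\xi(f)\tau_\xi(g)$, so each $\tau_\xi$ is a multiplicative linear functional on the convolution algebra $L^1(\R^n)$.

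Next we use the classical description of the Gelfand spectrum of $L^1(\R^n)$. Every nonzero multiplicative linear functional on $L^1(\R^n)$ has the form $f\mapsto \int f(x)e^{-2\pi i\langle x,\eta\rangle}\,dx=\F(f)(\eta)$ for a unique $\eta\in\R^n$. We would reprove this only briefly.
\begin{itemize}
\item By duality $L^1(\R^n)^*=L^\infty(\R^n)$, write $\tau(f)=\int f\varphi$ with $\varphi\in L^\infty$.
\item Multiplicativity together with Fubini gives $\varphi(x+y)=\varphi(x)\varphi(y)$ for almost every $(x,y)$.
\item Pick $f$ with $\tau(f)\neq0$. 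Then $\varphi(y)=\tau(f_y)/\tau(f)$, where $f_y$ is a translate of $f$, which shows that $\varphi$ agrees a.e. with a continuous function.
\item A bounded continuous homomorphism $\R^n\to\C^\times$ is a unitary character $e^{-2\pi i\langle\cdot,\eta\rangle}$.
\end{itemize}

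Now set $E=\{\xi:\tau_\xi\neq0\}$. For $\xi\in E$, let $\phi(\xi)$ be the unique $\eta$ from the previous step. For $\xi\notin E$, define $\phi(\xi)$ arbitrarily, for instance $\phi(\xi)=0$. Uniqueness of $\eta$ holds because distinct characters give distinct functionals on $L^1$, by injectivity of the Fourier transform. With these choices, $T(f)(\xi)=\chi_E(\xi)\F(f)(\phi(\xi))$ holds for every $\xi$ and every $f$. This is exactly the form in Theorem \ref{th: jaming}.

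The main obstacle is the spectrum identification, specifically the step from the a.e. functional equation to a genuine continuous character. Everything else is bookkeeping. Note that the statement asks for no regularity of $E$ or $\phi$, so no further work on those is needed.
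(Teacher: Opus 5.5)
Your argument is correct and follows the route the paper points to. The paper gives no proof of its own; it cites Jaming and notes only that his proof rests on the duality $L^1(\R^n)^*=L^\infty(\R^n)$, which is exactly what drives your identification of each nonzero evaluation functional $f\mapsto T(f)(\xi)$ with a character. The details you leave implicit are routine: the continuous representative satisfies the functional equation everywhere (for instance via $f_x\ast f_y=(f\ast f)_{x+y}$), and it is nonvanishing since $\psi(x)\psi(-x)=\psi(0)=1$.
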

We would like to mention that, in the proof of Theorem \ref{th: jaming}, the author makes use of the fact that the dual of $L^1(\R^n)$ is $L^\infty(\R^n)$. Subsequently, this problem has been studied in various setting. In \cite{Lavanya_Weyl}, the authors obtained a characterization of the Weyl transform, which they used to prove a characterization theorem for the group Fourier transform on the Heisenberg group in \cite{Lavanya_Heisen}. The characterization of the group Fourier transform on an arbitrary non-abelian compact group is obtained in \cite{Kumar}. Recently, in \cite{krukowski}, 
M. Krukowski has shown that two fundamental properties of the Fourier transform, namely the time differentiation property and the Dirac delta property are sufficient to characterize the Fourier transform among integral transforms.

\par In \cite{Hasan1}, the characterization problem is investigated for the Fourier transform of Hilbert space-valued functions on $\R$. Let $H$ be a separable Hilbert space. Then the convolution of $H$-valued functions is defined in \cite{Hasan1} as follows.
\begin{equation} \label{eq: vector conv def}
    (f\ast g) (x)=\int_{\R}\left\langle f(y), g(x-y)\right\rangle dy,~~f,g\in L^1(\R,H).
\end{equation}
Observe that for $f,g\in L^1(\R,H)$, $f\ast g\in L^1(\R)$. So, the authors in \cite{Hasan1} defined convolution of a scalar valued function and a vector valued function as follows.
\begin{equation}\label{eq: vector scalar conv}
    (f\odot g)(x)=\int_\R f(x-y)g(y)dy,~~f\in L^1(\R), g\in L^1(\R,H).
\end{equation}
Then it is easy to see that $(f\ast g)\odot h\in L^1(\R, H)$ whenever $f,g,h\in L^1(\R,H)$. Further, for $f,g:\R\to H$, $h:\R\to\C$, we define the following notion of products.
\[
(f\times g)(x)=\langle f(x),g(-x)\rangle, \text{ and }\,\, (h\smblksquare g)(x)=h(x)g(x),~~x\in\R.
\]
Then we have the following convolution theorem in this context.
\begin{equation}\label{eq: conv th intro}
    \left(\left( f\ast g\right)\odot h\right)~\widehat{}~= \left(\widehat{f}\times \widehat{g}\right) \smblksquare\widehat{h},
\end{equation}
$f,g,h\in L^1(\R,H),$ where the Fourier transform $\widehat{f}$ of a vector valued function $f$ is defined as in the scalar valued case.
\begin{equation*}
    \widehat{f}(\xi)=\int_\R e^{-2\pi ix\xi}f(x)dx,~~\xi\in R,
\end{equation*}
where the right hand side is a Bochner integral. For a detailed study of vector-valued integrations, we refer to \cite{Diestel}. 

% Let us now state the characterization theorem in the context of vector valued functions. 

% \begin{theorem}[\cite{Hasan1}]\label{L1 char}
% Let $(H, \langle {\cdot, \cdot } \rangle)$ be a separable Hilbert space with an orthonormal basis $\{e_k: k\in\N\}$ and $T:L^1(\R, H)\rightarrow C_0(\R, H)$ be a bounded linear operator satisfying
% \item[(i)] $T((f\ast g)\odot h)=\left(Tf\times Tg\right) \smblksquare Th$, for $f, g, h \in L^1(\R, H)$,
% \item[(ii)]$T(\tau_s f)(\xi)=e^{-2\pi i s \xi}Tf(\xi)$, $f\in L^1(\R, H), ~~s, \xi \in \R$,
% \item[(iii)] If $\langle {f(x), e_k}\rangle =0$ for $a.e. ~x \in \R$ for some $k$ then $\langle {Tf(\xi), e_k}\rangle =0$ for all $ \xi \in \R$, and
% \item[(iv)] $T(\widetilde {f})(-\xi)=Tf(\xi)$,  $f\in L^1(\R, H),~~ \xi \in \R$, where $\widetilde {f}(x)=f(-x)$.\\
% Then there exists $E\subset \R$ such that
% \begin{center}
% $\langle {Tf(\xi), Tg(\xi)}\rangle =\chi_E(\xi)\langle {\widehat{f}(\xi), \widehat{g}(\xi)}\rangle$,\quad  $f,g \in L^1(\R, H), ~ \xi \in \R$.
% \end{center}
% \end{theorem}
% As a corollary to Theorem \ref{L1 char}, the following characterization is obtained.
% \begin{corollary}[\cite{Hasan1}]\label{cor: L^1 cor}
% If in addition to the hypothesis of Theorem \ref{L1 char}, $T$ satisfies $T(g_k)=g_k$ where $g_k(x)=e^{-\pi x^2}e_k$ for all $k\in \N$, then there exists $E\subset \R$ such that $Tf(\xi)=\chi_E (\xi) \widehat{f}(\xi)$,\quad $f \in L^1(\R, H),~~\xi \in \R$.
% \end{corollary}
In this paper, our aim is to obtain a characterization of the Fourier transform on the $H$-valued Schwartz space, $S(\R^n, H)$, where $H$ is a separable Hilbert space. The space $S(\R^n, H)$ is defined as in the classical case as follows. 

\begin{definition}[\cite{VVFT}]
  Let $H$ be a Hilbert space. $S(\mathbb{R}^n, H)$, the $H$-valued Schwartz space is the collection of all rapidly decreasing smooth function $f: \R^n \to H$ such that $(1+|x|)^m\|D^\alpha f(x)\|_H$ is bounded for all multi-indices $\alpha$ and all non negative integer $m$, where
  $$D^\alpha f(x)= \frac{\partial^{|\alpha|}f(x)}{\partial x_1^{\alpha_1} \partial x_2^{\alpha_2}\cdots \partial x_n^{\alpha_n}},$$
  and $ |\alpha|= \alpha_1+ \alpha_2+ \cdots + \alpha_n$ is the order of the multi-index $\alpha = (\alpha_1,\alpha_2, \cdots, \alpha_n).$
\end{definition}
The space $S(\R^n,H)$ turns out to be a topological vector space equipped with the family of semi-norms
\[
p_{m,\alpha}(f)=\sup_{x\in\R^n}(1+|x|)^m\|D^\alpha f(x)\|_H.
\]
Let $S'(\mathbb{\R}^n,H)$ be the topological dual of $S(\mathbb{R}^n,H)$. For convenience, we write $S_H$ and $S_H^\prime$ for $S(\R^n, H)$ and $S^\prime(\R^n, H)$, respectively. When $H=\C$, we write $S_\C$ and $S_\C^\prime$ to denote the classical Schwartz space and its dual, respectively.
\par Proceeding as in the classical case, one can show that the Fourier transform leaves $S_H$ invariant. Moreover, using the inversion formula and the Plancherel theorem (see Proposition 2.4.5 and Theorem 2.4.9 in \cite{VVFT}) for the Fourier transform of vector valued functions it follows that the Fourier transform is a linear isomorphism on $S_H$. Moreover, it is easy to see that $\widehat{\widehat{f}}(\xi)=f(-\xi)$, for all $f\in S_H$. Using this fact and \eqref{eq: conv th intro} it follows that
\begin{equation}\label{eq: intro conv th 2}
\left((f\times g)\smblksquare h\right)~\widehat{}~=\left(\widehat{f}\ast \widehat{g}\right)\odot\widehat{h},
\end{equation}
$f,g,h\in S_H$.
\par The Fourier transform $\widehat{f}$, of $f\in S_H^\prime$ is defined via the following relation.
\[
\langle\widehat{f},\phi\rangle=\langle f,\widehat{\phi}\rangle,~~\text{ for all }\phi\in S_H.
\]
Then it is easy to see that $\widehat{f}\in S_H^\prime$, whenever $f\in S_H^\prime$. Furthermore, it follows that the Fourier transform maps $S_H^\prime$ onto $S_H^\prime$ bijectively and the inverse $\widecheck{f}$ of $f\in S_H^\prime$ is given by the following equality.
\[
\langle \widecheck{f}, \phi\rangle=\langle f, \widecheck{\phi}\rangle,~~\text{ for all }\phi\in S_H,
\]
where $\widecheck{\phi}$ is the inverse Fourier transform of $\phi$.
\par To obtain the desired characterization, we first extend the notion of convolution of $f\in S_H^\prime, g,h\in S_H$ defined in \eqref{eq: vector conv def} and \eqref{eq: vector scalar conv}. More precisely, $(f\fisheye g)\circledast h$ is defined as an element in $S_H^\prime$ as follows.
\begin{equation}\label{eq: intro conv def}
\left \langle(f\fisheye g)\circledast h, \phi\right\rangle=\left\langle f, (\phi\ast \widetilde{h})\mdsmblkcircle \widetilde{g}\right\rangle,~~~\phi\in S_H,
\end{equation}
where $\widetilde{g}(x)=g(-x)$. The easily verifiable fact $(\phi\ast h)\mdsmblkcircle g\in S_H$, whenever $\phi,g,h\in S_H$, implies that the above definition makes sense. Moreover, we extend the notion of product in a similar way as follows.
\begin{equation}\label{eq: intro pro def}
\langle (f\blackinwhitesquare g)\otimes h,\phi\rangle=\langle f, (\phi\times h)\smblksquare g\rangle,~~ f\in S_H^\prime, g,h,\phi\in S_H.
\end{equation}
For $f\in S_H^\prime, g,h\in S_H$, using \eqref{eq: conv th intro} and \eqref{eq: intro conv th 2} we obtain the following.
\begin{align}
   \label{eq: intro conv pro} \left((f\fisheye g)\circledast h\right)~\widehat{}~&=\left(\widehat{f}\,\blackinwhitesquare\widehat{g}\right)\otimes\widehat{h},\\\label{eq: intro pro conv}\left((f\blackinwhitesquare g)\otimes h\right)~\widehat{}~&=\left(\widehat{f}\,\fisheye \widehat{g}\right)\circledast \widehat{h}.
\end{align}
We now look for another interesting property of the Fourier transform in the context of vector valued functions. Let $\{e_k:k\in\N\}$ be an orthonormal basis for $H$. Then the Fourier transform of vector valued functions satisfies the following equality.
$$
\F\left(f \right)=\sum_{k\in\N} \F\left(\langle f(\cdot),e_k\rangle e_k\right),$$ 
for all $f\in S_H$ with pointwise expansion $f(x)=\sum_{k\in\N}\langle f(x), e_k\rangle e_k$. In fact, we have
$$
\F(f)(\xi)=\sum_{k\in\N}\left\langle\F(f)(\xi),e_k\right\rangle e_k=\sum_{k\in\N}\int_{\R^n}e^{-2\pi ix\xi}\langle f(x), e_k\rangle e_k dx.
$$

\par In the first half of this paper, our aim is to obtain a characterization of the Fourier transform on $S_H$ using the structural properties of the Fourier transform. In order to state our results, we need the following notion of orthogonality of an element $\Lambda\in S_H^\prime$ and $e_k\in H$.
\begin{definition}
    A tempered distribution $\Lambda\in S_H^\prime$ is said to be orthogonal to $e_k$ if $\Lambda(\phi e_k)=0,~~\text{ for all }\phi\in S_\C.$
\end{definition}
In the context of Fourier transform of vector valued functions on $\R^n$, we obtain the following characterization.
\begin{theorem}\label{th: main th r}
    Let $H$ be a separable Hilbert space with an orthonormal basis $\{e_k:k\in\N\}$. Suppose that $U: S_H \to S_H$ is a bijective mapping with bijective extension (which we denote by $U$ again) on $S_H^\prime$. Assume that $U$ satisfies the following:
    \begin{itemize}
     \item [(i)]  $U((f\fisheye g)\circledast h)=(Uf \blackinwhitesquare Ug) \otimes Uh)$ $\forall f \in S_H'$ and $\forall g,h \in S_H.$
        \item  [(ii)] $U((f\blackinwhitesquare g)\otimes h)=(Uf\fisheye Ug)\circledast Uh)$ $\forall f \in S_H'$ and $\forall g,h \in S_H.$
        \item [(iii)] If $f$ is orthogonal to $e_k$, then $Uf$ is orthogonal to $e_k$,  for all $f\in S_H^\prime$ and $Ue_k \in H$.
     \item[(iv)] $U\widetilde{f}(x)=\widetilde{Uf}(x),$ where $\widetilde{f}(x)=f(-x)$ for all $f \in S_H.$ 
        %\item [(i)] If $f$ is orthogonal to $e_k$, then $Tf$ is orthogonal to $e_k$, for all $f\in S_H^\prime$.
       % \item[(v)] $U\left(f \right)=\sum_{k\in\N} U\left(\langle f(\cdot),e_k\rangle e_k\right)$, for all $f\in S_H$.
    \end{itemize}
    Then there exists $B \in M_n(\R)$ such that 
    $$\left\langle Uf(\omega), Ug(\omega)\right\rangle = \left\langle \F f(B\omega), \F g(B\omega) \right\rangle\,\,\text{ or }\,\, \left\langle Uf(\omega), Ug(\omega)\right\rangle = \left\langle \F g(B\omega), \F f(B\omega) \right\rangle, \,\,\, f,g \in S_H, \omega\in \R^n.$$
\end{theorem}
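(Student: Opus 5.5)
The plan is to reduce to the scalar Alesker–Artstein–Avidan–Milman theorem (Theorem \ref{th: 1.2}, or rather its Fourier version Theorem \ref{th: alesker}). Set $V=U\circ\F^{-1}$ on $S_H$ and on $S_H^\prime$. By \eqref{eq: intro conv pro} and \eqref{eq: intro pro conv}, hypotheses (i) and (ii) become the statements that $V$ preserves the two mixed operations:
\[
V((f\blackinwhitesquare g)\otimes h)=(Vf\blackinwhitesquare Vg)\otimes Vh,\qquad V((f\fisheye g)\circledast h)=(Vf\fisheye Vg)\circledast Vh.
\]
Hypotheses (iii) and (iv) are inherited by $V$, since $\F$ commutes with reflection and acts coordinatewise in the basis $\{e_k\}$.

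The first step is to use (iii) to decouple coordinates. Let $S_\C e_k=\{\phi e_k:\phi\in S_\C\}$. A function in $S_H$ lies in $S_\C e_k$ exactly when it is orthogonal to every $e_j$ with $j\neq k$, so (iii) and bijectivity should give $V(S_\C e_k)=S_\C e_k$ and likewise for $S_\C^\prime e_k$. This defines bijections $V_k$ on $S_\C$ and $S_\C^\prime$ by $V(\phi e_k)=V_k(\phi)e_k$.

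The second step specialises the two preserved identities to $f=\psi e_k$, $g=\phi e_k$, $h=\chi e_k$. On such inputs, using (iv) to absorb the reflections $\widetilde{g}$ and $g(-x)$ in the definitions, the operations reduce to scalar ones:
\[
(f\times g)\smblksquare h=(\psi\cdot\widetilde{\phi})\,\chi\, e_k,\qquad (f\ast g)\odot h=(\psi*\widetilde{\phi})*\chi\, e_k .
\]
We obtain ternary identities $V_k((\psi\phi)\chi)=V_k(\psi)V_k(\phi)V_k(\chi)$, and similarly for convolution. To get binary ones, I would first identify a unit. Choosing $\psi$ to be the constant $1\in S_\C^\prime$ together with the delta distribution, and using bijectivity as in the classical proof, should show that $V_k(1)$ and $V_k(\delta)$ are idempotent for the respective ternary operations. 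This should force $V_k(1)=c$ and $V_k(\delta)=c'\delta$ with $c^2=1$. Normalising signs then yields binary multiplicativity and convolution preservation, up to a factor $\pm1$.

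Theorem \ref{th: 1.2} then gives $V_k\phi=\pm\,\phi\circ B_k$ or $V_k\phi=\pm\,\overline{\phi\circ B_k}$, with $|\det B_k|=1$.

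The third step, which I expect to be the main obstacle, is to show that $B_k$, the conjugation choice and the signs are independent of $k$. Here I would use mixed inputs $f=\psi e_k$, $g=\phi e_j$, $h=\chi e_l$. The inner product $\langle\psi e_k(y),\phi e_j(x-y)\rangle$ vanishes unless $j=k$, but for $f=\psi e_k+\psi' e_j$ one gets nontrivial cross terms. Preservation of these terms should force $\phi\circ B_k\cdot\chi\circ B_l$ and $\phi\circ B_k\cdot\chi\circ B_j$ to agree consistently, hence $B_k=B$ and a common conjugation choice. The residual signs $\epsilon_k$ satisfy $\epsilon_k^2=1$, so they disappear in the inner products.

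It remains to pass from basis vectors to general $f$. This uses that $V$ is determined on each coordinate, via the ternary identity with $h=\chi e_k$ and (iii). We get $Uf(\omega)=\sum_k\epsilon_k\F f_k(B\omega)e_k$ or its conjugated version, where $f_k=\langle f,e_k\rangle$. Taking inner products, $\langle Uf(\omega),Ug(\omega)\rangle$ equals either $\langle\F f(B\omega),\F g(B\omega)\rangle$ or, in the conjugated case, $\langle \F g(B\omega),\F f(B\omega)\rangle$, as claimed.
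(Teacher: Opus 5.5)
The gap is in your second step, the identification of $V_k(\delta)$. For the constant, the hypothesis that constants go to constants (granting the reading discussed below) gives you $V_k(1)\in\C$. Nothing comparable is assumed about $V_k(\delta)$, and idempotence plus bijectivity does not force $V_k(\delta)\in\C\delta$. Here is a counterexample: take $f\mapsto D*f$ with $\widehat{D}(\xi)=e^{i|\xi|^2}$. It is a bijection of $S_\C$ and $S_\C'$, commutes with reflection, fixes $1$, and satisfies $D*((\psi*\bar\phi)*\chi)=((D*\psi)*\overline{D*\phi})*(D*\chi)$; on the Fourier side this amounts to $\widehat{D}(\xi)\,\overline{\widehat{D}(-\xi)}=1$. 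Yet $D\notin\C\delta$.

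The classical trick does not transfer either. Putting $\psi=\delta$ in the ternary identity gives $V_k(\bar\phi*\chi)=V_k(\delta)*\overline{V_k\phi}*V_k\chi$, which is not a relation of the form $X*h=h$ for all $h$. So $V_k(\delta)$ can only be pinned down by playing the product identity against the convolution identity, and that is the bulk of the paper's proof:
\begin{itemize}
\item Step 3 uses products with disjointly supported functions to show that $T\delta_x^k$ is supported at a single point $u(x)$, independent of $k$.
\item Step 4 expands $T\delta_0^k$ as a finite combination of derivatives of $\delta_{u(0)}^k$ and uses $\delta_0^k=(\delta_0^k\circledast T\delta_0^k)\fisheye T\delta_0^k$ to get $u(0)=0$ and kill the derivatives.
\item Steps 5--6 give $T\delta_x^k=\delta_{u(x)}^k$ with $u$ additive.
\end{itemize}
Without an argument of this kind you cannot turn the ternary convolution identity into the binary hypothesis of Theorem~\ref{th: 1.2}. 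Two further problems remain even with both units in hand. A single scalar cannot normalize both units at once: you are left with $W(f*g)=d\,Wf*Wg$ for some $|d|=1$, which Theorem~\ref{th: 1.2} does not cover. And since only the first slot accepts distributions, you need either factorization ($S_\C=S_\C\cdot S_\C=S_\C*S_\C$) or extended identities like those built in Section 2 and Step 1 to reach every $g\in S_\C$.

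Your reduction formulas also drop complex conjugates. The inner product is conjugate-linear in its second slot, so for $f=\psi e_k$, $g=\phi e_k$, $h=\chi e_k$ one gets $(f\times g)\smblksquare h=\psi\,\overline{\widetilde{\phi}}\,\chi\,e_k$ and $(f\ast g)\odot h=((\psi*\bar\phi)*\chi)\,e_k$. This has several consequences:
\begin{itemize}
\item The ternary identities are $V_k(\psi\bar\phi\chi)=V_k\psi\,\overline{V_k\phi}\,V_k\chi$ and its convolution analogue.
\item Idempotence gives $|c|=1$ (as in the paper's Step 1), not $c^2=1$, and the $\epsilon_k$ are unimodular rather than signs.
\item You must show the normalized $V_k$ commutes with conjugation before any binary identity appears.
\end{itemize}

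The constant hypothesis is also not inherited in the form you use. For $V=U\circ\F^{-1}$ one has $V(e_k)=U(\delta_0^k)$ and $V(\delta_0^k)=U(e_k)$, so ``$Ue_k\in H$'' yields $V(\delta_0^k)\in H$, not $V(e_k)\in H$. Read literally, the hypothesis even contradicts (i): choosing $g,h$ with $(e_k\fisheye g)\circledast h=e_k$, as in the paper's Step 2, (i) puts $U(e_k)$ in $S_H\cap H=\{0\}$, while $U(0)=0$, contradicting injectivity. The paper's reduction $T=U\circ\F$ tacitly uses the reading $U\delta_0^k\in H$, and you need the same.

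On the positive side, your decoupling via $V\bigl((f\blackinwhitesquare\phi e_k)\otimes\chi e_k\bigr)$, which shows that $(Vf)_k$ depends only on $f_k$, is sound. Do it before the cross-coordinate step, and there use $f=\psi e_k$, $g=\phi e_k$, $h=\chi e_l$ rather than sums, since $V$ is not additive. If the gap were closed, your route would give the stronger coordinatewise formula $Uf(\omega)=\sum_k\epsilon_k\F f_k(B\omega)e_k$ (or its conjugate). The paper, by contrast, reaches only the inner-product identity, via its function $\beta$ in Steps 7--9.
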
 
%{\color{teal} 
An interesting consequence of our method is that, if the operator $U$ in Theorem \ref{th: main th r} satisfies an additional assumption: $$U(g_k^\varepsilon)(\xi)=g_k(\varepsilon\xi),$$ for all $k\in \N$ and $\varepsilon>0$, where $g_k(x)=e^{-\pi |x|^2}e_k$ , and $g_k^\varepsilon(x)=\varepsilon^{-n}g_k(x/\varepsilon)$ then, the operator $U$ uniquely determines the Fourier transform (see Corollary \ref{cor: main th cor}). 
At this point, we would like to mention that a priori we do not assume anything about the boundedness or linearity of the operator $U$. However, as we are working with the infinite-dimensional Hilbert spaces, we require conditions (iii) and (iv) in our case. 

\begin{remark}
    The use of an orthonormal basis in the hypotheses does not affect the validity of the above characterization theorem, since it serves only to formulate the orthogonality condition. In contrast, defining convolution component-wise with respect to the basis may introduce basis dependence and convergence difficulties, potentially restricting the generality of the result. Hence, we choose the standard convolution of vector-valued functions, which requires a suitable multiplication in the Hilbert space.
\end{remark}

\par Our approach is inspired by the work of Alesker et al. \cite{Alesker_paper}.  Observe that the operator $U$, interchanges convolution and product of functions. However, the operator $T$ defined as the composition of the operator $U$ with the Fourier transform maps convolution (resp. product) of functions into their convolution (resp. product).
%{\color{blue} Observe that the operator $T$ obtained by composing the operator $U$ with a Fourier transform satisfies the hypotheses of Theorem \ref{th: main th r}. 
In the following theorem, we obtain a characterization of $T$, which in turn leads to Theorem \ref{th: main th r}.
%We obtain Theorem \ref{th: main th r} as a natural consequence of the result stated below.
%At first, we obtain the following characterization.
\begin{theorem}\label{th: main th r 1}
    Let $H$ be a separable Hilbert space with an orthonormal basis $\{e_k:k\in\N\}$. Suppose that $T: S_H \to S_H$ is a bijective mapping with a bijective extension (which we denote by $T$ again) on $S_H' $. Assume that $T$ satisfies the following:
    \begin{itemize}
     \item [(i)]  $T((f\fisheye g)\circledast h)=(T f \fisheye T g) \circledast Th)$ $\forall f \in S_H'$ and $\forall g,h \in S_H$.
        \item  [(ii)] $T((f\blackinwhitesquare g)\otimes h)=(T f\blackinwhitesquare Tg)\otimes Th)$ $\forall f \in S_H'$ and $\forall g,h \in S_H$.
        \item [(iii)] If $f\in S_H'$ is orthogonal to $e_k$, then $Tf$ is orthogonal to $e_k$ and $Te_k \in H$.
     \item[(iv)] $T\widetilde{f}(x)=\widetilde{Tf}(x)$, for all $f \in S_H$, where $\widetilde{f}(x)=f(-x)$.
    \end{itemize}
    Then there exists $B \in M_n(\R)$ such that 
    $$\left\langle Tf(\omega), Tg(\omega)\right\rangle = \left\langle f(B\omega), g(B\omega) \right\rangle\,\,\text{ or }\,\, \left\langle Tf(\omega), Tg(\omega)\right\rangle = \left\langle g(B\omega), f(B\omega) \right\rangle,\,\,\, f,g \in S_H,~\omega\in \R^n.$$
\end{theorem}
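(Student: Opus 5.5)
Our plan is to adapt the strategy of Alesker et al.\ behind Theorem \ref{th: 1.2}. The idea is to compute the action of $T$ on a few elementary distributions, and then use hypotheses (i) and (ii) to transport the structure of a general $f\in S_H$ to $Tf$.

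\textbf{Step 1: elementary distributions.} First we record what (i) and (ii) say when $f$ is the zero distribution or a constant vector $e_k$ (viewed in $S_H'$). Since $(0\fisheye g)\circledast h=0$ for all $g,h$, hypothesis (i) gives $(T0\fisheye Tg)\circledast Th=T0$. Because $T$ is onto $S_H$, $Tg$ and $Th$ range over all of $S_H$, and this forces $T0=0$.

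Next we evaluate the definitions \eqref{eq: intro conv def} and \eqref{eq: intro pro def} on $f=e_k$. For instance,
\[
\langle(e_k\blackinwhitesquare g)\otimes h,\phi\rangle=\int\langle\phi(x),h(-x)\rangle\langle g(x),e_k\rangle\,dx .
\]
So this product is again a structured object, essentially a multiplication by $\langle g,e_k\rangle$. Hypothesis (iii) tells us $Te_k\in H$. Combining this with (ii), we expect to show $Te_k=c_k e_{\sigma(k)}$, and, using the orthogonality preservation, in fact $Te_k=c_ke_k$ with $|c_k|=1$. The Dirac deltas $\delta_a e_k$ play the role dual to the constants and are handled through (i).

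\textbf{Step 2: reduce to scalar functions.} For $g,h\in S_H$ the distribution $(e_k\blackinwhitesquare g)\otimes h$ is the function $x\mapsto\langle g(x),e_k\rangle\,\widetilde h(x)$-type object. Applying $T$ and (ii), we get that $x\mapsto\langle Tg(x),Te_k\rangle$ multiplies $Th$ in the same way.

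Define $V_k$ on scalar Schwartz functions by $V_k(\langle g,e_k\rangle)=\langle Tg,Te_k\rangle$. Well-definedness should come from (iii): if $\langle g,e_k\rangle=0$ then $g$ is orthogonal to $e_k$, and so $Tg$ is as well. Using (i) and (ii) with $g,h$ of the form $\psi e_k$, we obtain that $V_k$ is a bijection on $S_\C$ with an extension to $S_\C'$. This extension preserves both convolution and pointwise product, up to the reflection handled by (iv); note that the products $f\times g$ carry a reflection $g(-x)$, which is exactly what (iv) compensates.

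Theorem \ref{th: 1.2} then applies to each $V_k$. It gives $V_k\psi=\psi\circ B_k$ or $\overline{\psi\circ B_k}$.

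\textbf{Step 3: glue the components.} We must show that $B_k$ and the choice of conjugation are independent of $k$. For this we apply (ii) to mixed elements such as $g=\psi(e_k+e_j)$, which couples the indices $k$ and $j$. Then we sum the components over the orthonormal basis to get
\[
\langle Tf(\omega),Tg(\omega)\rangle=\sum_k\langle Tf(\omega),e_k\rangle\overline{\langle Tg(\omega),e_k\rangle}.
\]
In the unconjugated case this equals $\langle f(B\omega),g(B\omega)\rangle$; in the conjugated case the roles of $f$ and $g$ swap. The phases $c_k$ cancel in this inner product because $|c_k|=1$.

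\textbf{Main obstacle.} The hardest step will be Step 2: showing that the scalar operators $V_k$ are genuinely well defined, bijective on $S_\C$, and extend to $S_\C'$ with exactly the two multiplicativity identities required by Theorem \ref{th: 1.2}. This is difficult because $T$ is not assumed linear, and the vector convolution mixes components through the inner product. I would first try to exploit (iii) to show that $T$ maps the subspace $S_\C e_k$ onto itself, since this would make the reduction clean.
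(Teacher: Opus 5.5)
There is a genuine gap in Step 2, at the claim that $V_k$ satisfies ``exactly the two multiplicativity identities required by Theorem \ref{th: 1.2}''. It does not. Unwinding \eqref{eq: intro conv def} and \eqref{eq: intro pro def} with $f=\psi_0e_k$, $g=\psi_1e_k$, $h=\psi_2e_k$ (and likewise with $\psi_0$ a scalar distribution) gives
\[
(f\fisheye g)\circledast h=(\psi_0*\overline{\psi_1}*\psi_2)\,e_k,\qquad (f\blackinwhitesquare g)\otimes h=\psi_0\,\overline{\psi_1}\,\widetilde{\psi_2}\,e_k .
\]
So (i), (ii), (iv) only give \emph{ternary} identities with a conjugated middle factor, e.g.\ $V_k(\psi_0*\overline{\psi_1}*\psi_2)=V_k\psi_0*\overline{V_k\psi_1}*V_k\psi_2$. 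Hypothesis (iv) removes the reflection, not the extra factor. Reaching the binary identities of Theorem \ref{th: 1.2} means inserting the unit of each operation.

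For the product this is manageable. The unit $e_k$ may be placed in the distribution slot of (ii), and $Te_k=c_ke_k$ is given by (iii). You still owe $|c_k|=1$, which you only ``expect'', and a factorization of Schwartz functions as products.

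For the convolution the unit is $\delta_0^k$, and placing it in (i) produces $T\delta_0^k$, which your argument never determines. The convolution identity alone cannot determine it. The scalar map $\widehat{V\psi}(\xi)=m(\xi)\widehat{\psi}(\xi)$ with $m(\xi)=e^{i|\xi|^2}$ satisfies the ternary convolution identity, because $m(\xi)\overline{m(-\xi)}=1$, yet $V\delta_0\neq\delta_0$. Applied componentwise on $S_H$, this map even satisfies (i), (iii) and (iv). So (ii) must enter in locating $T\delta_0^k$.

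That is the core of the paper's proof. Step 3 uses (ii) to show that $\mathrm{supp}(T\delta_x^k)$ is a single point independent of $k$. Step 4 then uses the structure theorem for point-supported distributions, the extended convolutions on the class $S_2$, and (i) to kill the derivative terms and get $T\delta_0^k=\pm\delta_0^k$. Your remark that the deltas are ``handled through (i)'' skips exactly this. The paper never invokes the scalar theorem. It shows $T\delta_x^k=\delta_{Bx}^k$ directly and reduces everything to one continuous multiplicative bijection $\beta$ of $\C$ with $\langle Tc,Td\rangle=\beta(\langle c,d\rangle)$.

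Two smaller points. First, justifying well-definedness of $V_k$ via (iii) fails for a nonlinear $T$: $\langle g_1-g_2,e_k\rangle=0$ says nothing about $Tg_1-Tg_2$. It can be repaired with (ii). Since $(e_k\blackinwhitesquare g)\otimes h(x)=\langle e_k,g(x)\rangle h(-x)$ depends on $g$ only through $\langle g,e_k\rangle$, and $Th$ exhausts $S_H$, $\langle Tg,Te_k\rangle$ depends only on $\langle g,e_k\rangle$. Second, making $V_k$ a bijection of $S_\C'$ requires $T^{-1}$ to preserve orthogonality too, which (iii) does not give; the paper also uses this tacitly.

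Your gluing step, by contrast, is plausible once Step 2 is repaired. Applying (ii) with $f=\psi_0e_k$, $g=\psi_1e_k$, $h=\psi_2e_j$ gives $T(\psi_0\overline{\psi_1}\widetilde{\psi_2}\,e_j)=\langle T(\psi_0e_k),T(\psi_1e_k)\rangle\,\widetilde{T(\psi_2e_j)}$. This couples $V_k$ and $V_j$ and forces a common $B$ and a common choice of conjugation. In the paper, this uniformity comes instead from the $k$-independence of $\mathrm{supp}(T\delta_x^k)$ and from the single function $\beta$.
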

Since we are working with infinite dimensional Hilbert spaces, we require the additional hypothesis $(iii)$ and $(iv)$ in Theorem \ref{th: main th r 1}. 

In the proof of our main Theorem, the following two types of distributions play an important role. For $y\in \R^n, k\in \N $ we define $\delta_y^k$, as 
\[
\langle \delta_y^k,f\rangle=\langle f(y), e_k\rangle,~~~f\in S_H,
\]
where $\{e_k:k\in \N\}$ is an orthonormal basis for $H$. Moreover, any $u\in H$ can be considered as an element in $S_H^\prime$ via
\[\langle u, f\rangle=\int_{\R^n}\langle f(x), u\rangle dx, ~~~f\in S_H.\]
More precisely, we need to know the action of $T$ on $\delta_y^k,~e_k\in S_H^\prime$. In order to obtain these actions, first we need to extend \eqref{eq: intro conv def} and \eqref{eq: intro conv pro} for above mentioned distributions. Further, we need to derive the action of $T$ on such extended convolution and product (similar to the hypothesis (i), (ii) in Theorem \ref{th: main th r}). In the following section, we define and prove some properties of convolution of three distributions.

%{\color{blue}
The motivation for studying vector-valued analysis is to extend classical analytical methods to functions taking values in infinite-dimensional Hilbert spaces. Such extensions provide a natural framework for investigating structural properties of bijective mappings that preserve operations such as convolution, tensor products, orthogonality, and reflection symmetry. The characterization of these mappings in terms of suitable linear transformations or canonical operators forms a natural part of the broader study of symmetry and structure in functional analysis and harmonic analysis. These questions also highlight the interplay between operator theory, distribution theory, and harmonic analysis, see \cite{benedek1962convolution, RubioDeFranciaRuizTorrea1986}. Recently, in \cite{BiswasRadha2024}, the vector-valued Fourier transform was used to convert translations into multiplication by scalar exponentials while preserving the inner product. Together with Parseval's identity, this yields frame characterizations in terms of the Fourier transforms of the generating functions.
%The motivation for studying vector-valued analysis is to extend classical analysis to functions taking values in an infinite-dimensional Hilbert space. The study of bijective mappings that preserve operations such as convolution, tensor products, orthogonality, and reflection symmetry helps establish representation theorems that characterize these mappings in terms of suitable linear transformations or canonical operators. Such investigations strengthen the connections between functional analysis, distribution theory, and harmonic analysis.
%\par
%The use of an orthonormal basis in the hypotheses does not affect the validity of the above characterization theorem, since it serves only to formulate the orthogonality condition. In contrast, defining convolution component-wise with respect to the basis may introduce basis dependence and convergence difficulties, potentially restricting the generality of the result. Hence, we choose the standard convolution of vector-valued functions, which requires a suitable multiplication in the Hilbert space.
\par
Our result provides a more general characterization in a concise form. Indeed, although a separable Hilbert space admits an orthonormal basis, reducing the problem to the scalar-valued case via a coordinate-wise expansion does not, by itself, establish the result in full generality. 
%Such an approach requires justifying the convergence of the resulting infinite series in the \(H\)-norm and the interchange of the relevant operators with the series. 
Moreover, estimates with scalar values alone may not fully capture the underlying structure with a Hilbert-space value.

%{\color{blue}
We structure the proof in several steps, following the approach in \cite{Alesker_paper}. Unlike the classical setting, one of the key assumptions in our framework involves two different convolution operations.
%We prove the theorem in several steps as in \cite{Alesker_paper}. In contrary to the classical case, one of the hypotheses in our case consists of two different convolutions.
As a result, our proof differs significantly from the classical case and presents additional challenges. 
%As a result, our proof became different from that of the classical case.
In some special cases, we compute pointwise value of a distribution by applying it on a Schwartz class function. Furthermore, we make essential use of the fact that two distributions $\Lambda$ and $\Lambda'$ are equal if and only if $(\Lambda\fisheye\phi_1)\circledast\phi_2=(\Lambda'\fisheye\phi_1)\circledast\phi_2$, for all $\phi_1,\phi_2\in S_H$. This fact follows from the definition of the convolutions.
% Proofs consits of sevarel steps. Can you write the difficulties and nobility of the proof of the main theorem.
% Since we are dealing with infinite-dimensional Hilbert spaces, each step require a new idea.
%}
In the Section 3, we present the proof of our main theorem. 
\par 
%{\color{blue} 
Another objective of this paper is to address the problem in discrete setting, specifically the case where the underlying group is the finite cyclic group $\Z_n$. In \cite{Jaming}, the Fourier transform of functions on $\Z_n$ was characterized, which is analogous to the results of Alesker et al. \cite{Alesker_paper} in the continuous setting. In the continuous setting, Alesker et al. have assumed that  the operator $T$ maps smooth functions to smooth functions. However, since the notion of smoothness does not carry over to the finite group $\Z_n$, Jaming (\cite{Jaming}) instead assumed a mild continuity condition and obtained the characterization of Fourier transform on $L^1(\Z/n\Z)$. 

Note that the Fourier transform of $a\in L^1(\Z_n)$ is given by
\[
\F(a)(\ell)=\sum_{j=0}^{n-1} a(j)e^{-\frac{2\pi ij\ell}{n}},\,\, \ell\in\Z_n.
\]
A straightforward computation shows that $\F(f\cdot g)=\frac{1}{n}\F(f)\ast \F(g)$
holds for all $f,g\in L^1(\Z_n)$, where $$(f\ast g)(\ell)=\displaystyle\sum_{j=0}^{n-1} f(j) g(\ell-j) \text{ and } (f\cdot g)(\ell)=f(\ell) \cdot g(\ell),~~\ell\in\Z_n.$$
Due to the presence of the additional factor $\frac{1}{n}$ in the product-to-convolution formula above, we present the result in Jaming (\cite[Corollary 2.3]{Jaming}) with the necessary changes. Here we endow the physical space $\Z/n\Z$ and the frequency space $\Z/n\Z$ with the counting measure.
\begin{theorem}[\cite{Jaming}]
    Let $T: L^1(\Z/n\Z) \to L^1(\Z/n\Z)$ be a bijective transformation (not necessarily linear) such that the map $\C \to L^1(\Z/n\Z)$, $c\to T(c\mathbb{1})$ is continuous. Assume that
    \begin{itemize}
        \item [(i)] for every $a,b \in L^1(\Z/n\Z)$, $T(a \cdot b)= \frac 1nT(a)\ast T(b)$
        \item [(ii)] for every $a,b \in L^1(\Z/n\Z)$, $T(a\ast b)=T(a) \cdot T(b)$.
    \end{itemize}
    Then there exists $\eta \in \{1,2, \cdots n-1\}$ that has no common divisor with $n$ such that
    \begin{itemize}
        \item [-] either, for each $j \in \Z/n\Z$, for each $a \in L^1(\Z/n\Z)$, $T(a)(\eta j)= \F a(j)$,
        \item [-] or, for each $j \in \Z/n\Z$, for each $a\in L^1(\Z/n\Z)$, $ T(a)(\eta j)= \overline{\F a(j)}$.
    \end{itemize}
    \end{theorem}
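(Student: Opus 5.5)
We plan to argue as Alesker et al.\ do in the continuous case: compose $T$ with the inverse Fourier transform to obtain a bijection of $L^1(\Z/n\Z)\cong\C^n$ that preserves both operations, and then classify such bijections by hand. Put $U=T\circ\F^{-1}$.

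\emph{Transferring the hypotheses.} From $\F(a\ast b)=\F a\cdot\F b$ and (ii) we get
\[
U(\F a\cdot\F b)=T(a\ast b)=Ta\cdot Tb=U(\F a)\cdot U(\F b),
\]
so $U$ is multiplicative for the pointwise product. From $\F a\ast\F b=n\,\F(a\cdot b)$ and (i) we get $U(\F a\ast\F b)=T(n\,ab)$. One application of (i) gives $T(n\,ab)=\frac1n T(na)\ast T(b)$. Since the constant $n\mathbb 1$ is idempotent for $\frac1n\ast$ up to normalization, we aim to show $T(n\,c)=c'$-type identities by evaluating (i) on constants. This should give $U(x\ast y)=U(x)\ast U(y)$, possibly up to a harmless fixed normalization constant. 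Checking this normalization is routine but must be done carefully.

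\emph{The multiplicative structure.} Since $U$ is a bijective multiplicative map of $(\C^n,\cdot)$, it fixes $0$ and $\mathbb 1$ (the absorbing and neutral elements). It permutes the idempotents $\mathbb 1_E$, $E\subset\Z/n\Z$, preserving the order $e\le f\iff ef=e$. Hence it permutes the minimal nonzero idempotents: $U(\mathbb 1_{\{j\}})=\mathbb 1_{\{\sigma(j)\}}$ for a permutation $\sigma$. Multiplying by $\mathbb 1_{\{j\}}$ gives
\[
U(x)\,\mathbb 1_{\{\sigma(j)\}}=U\big(x(j)\mathbb 1_{\{j\}}\big),
\]
so $U(x)(\sigma(j))=\varphi_j(x(j))$, where each $\varphi_j:\C\to\C$ is a multiplicative bijection with $\varphi_j(0)=0$ and $\varphi_j(1)=1$.

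\emph{Using convolution.} Convolution of point masses gives
\[
\mathbb 1_{\{j\}}\ast\mathbb 1_{\{k\}}=\mathbb 1_{\{j+k\}}.
\]
Applying $U$ yields $\mathbb 1_{\{\sigma(j)+\sigma(k)\}}=\mathbb 1_{\{\sigma(j+k)\}}$, so $\sigma$ is an automorphism of $\Z/n\Z$. Thus $\sigma(j)=\eta j$ with $\gcd(\eta,n)=1$. Next we use that the constant $\mathbb 1$ convolved with any $x$ is the constant $\sum_j x(j)$. Applying $U$ then gives
\[
\varphi\Big(\sum_j x(j)\Big)=\sum_j\varphi_j(x(j)).
\]
Specializing $x$ to have one or two nonzero entries shows that all $\varphi_j$ equal a single map $\varphi$, and that $\varphi$ is additive. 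Hence $\varphi$ is a field automorphism of $\C$.

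\emph{Continuity and conclusion.} The continuity of $c\mapsto T(c\mathbb 1)=U(c\,n\mathbb 1_{\{0\}})$ makes $\varphi$ continuous. So $\varphi$ is either the identity or complex conjugation. Unwinding $U=T\circ\F^{-1}$ gives either $T(a)(\eta j)=\F a(j)$ for all $j$ and $a$, or $T(a)(\eta j)=\overline{\F a(j)}$ for all $j$ and $a$.

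The main obstacle we expect is the first step: tracking the factor $\frac1n$ so that $U$ genuinely preserves convolution rather than a rescaled version of it. Here the behaviour of $T$ on constants and on $\delta_0$ must be pinned down first, for example by noting that $n\mathbb 1_{\{0\}}$ and $\mathbb 1$ are the units for the relevant products.
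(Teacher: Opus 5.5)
Your argument is correct, and the step you flag as the main obstacle is in fact immediate. Since $\F(ab)=\frac1n\,\F a\ast\F b$, hypothesis (i) gives directly $U(\frac1n\,x\ast y)=T(ab)=\frac1n\,Ta\ast Tb=\frac1n\,U(x)\ast U(y)$. So $U$ preserves the normalized convolution $x\star y:=\frac1n\,x\ast y$, whose unit is $n\mathbb 1_{\{0\}}$. Nothing about $T$ on constants or on $\delta_0$ has to be established first.

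What you should not aim for is $U(x\ast y)=U(x)\ast U(y)$ itself. That would require $U(nz)=nU(z)$, which is only available a posteriori. Your convolution identities should therefore be read with $\star$, which only inserts factors of $n$:
\begin{itemize}
\item $\mathbb 1_{\{j\}}\star\mathbb 1_{\{k\}}=\frac1n\mathbb 1_{\{j+k\}}$ still forces $\sigma(j+k)=\sigma(j)+\sigma(k)$.
\item With $S=\sum_j x(j)$, the identity $\mathbb 1\star x=\frac1n S\,\mathbb 1$ together with $U(\mathbb 1)=\mathbb 1$ gives $\varphi_m(S/n)=\frac1n\sum_j\varphi_j(x(j))$ for every $m$.
\item Taking $x=c\mathbb 1_{\{j\}}$ yields $\varphi_j(c)=n\varphi_m(c/n)$. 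Hence all $\varphi_j$ coincide with one map $\varphi$, and $\varphi(1/n)=1/n$.
\end{itemize}
After that, the rest goes through exactly as you wrote: additivity, continuity via $T(c\mathbb 1)=U(nc\,\mathbb 1_{\{0\}})=\varphi(nc)\mathbb 1_{\{0\}}$, and the unwinding $T(a)(\eta j)=\varphi(\F a(j))$.

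The paper does not prove this statement; it is quoted from Jaming. The closest argument is its proof of the $H$-valued analogue, Theorem \ref{T4.4}. That proof starts from the same reduction to an operator preserving both operations, but then works by hand:
\begin{itemize}
\item a support argument shows $T\delta_j^k$ is a single point mass;
\item convolution identities involving $\delta_0^k$ and $\mathbb 1_k$ normalize the scalars;
\item the remaining scalar map $\beta$ is only shown to be a continuous multiplicative bijection, hence of the form $(c/|c|)^k|c|^\alpha$;
\item a separate test-vector computation, leading to $(1+\sqrt{t})^\alpha=1+t^{\alpha/2}$, forces $\alpha=1$.
\end{itemize}
Your route replaces the support argument by the idempotent lattice of $(\C^n,\cdot)$, which gives the coordinatewise form $U(x)(\sigma(j))=\varphi_j(x(j))$ in one stroke. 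You then extract full additivity of $\varphi$ from convolution with the constant $\mathbb 1$. So $\varphi$ is a field automorphism and continuity finishes at once, with no classification of multiplicative maps and no exponent to compute. This is shorter and more transparent in the scalar case.

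The paper's approach is dictated by the vector-valued setting. There $a\times b$ is a sesquilinear pairing into scalars rather than a multiplication of $H$-valued functions. So there is no idempotent lattice to exploit, and only the inner products $\langle Ta(\cdot),Tb(\cdot)\rangle$ are determined.
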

    
%Because of the extra factor $\frac 1n$ present in the above product to convolution formula, we found a typo in the result of Jaming (\cite[Corollary 2.3.]{Jaming}). Here, we state his result with this modification:

A natural question then arises: can these results be extended in the context of Hilbert space valued functions? In this paper, we provide a characterization of Fourier transform on
$L^1(\Z/n\Z, H)$, where $H$ is a  separable Hilbert space. As in \cite{Jaming}, we impose a mild continuity condition to obtain the characterization.
Furthermore, by applying a tensorization argument, our result naturally extends to functions defined on arbitrary finite abelian groups.
%}

In the second half of our paper, we characterize the Fourier transform of vector valued functions on the group $\Z_n=\Z/n\Z$. Towards this end, first we define the convolution of such functions and obtain convolution theorem. Finally, we prove a characterization theorem for the Fourier transform 
%{\color{red}\st{of vector valued functions}}
on $L^1(\Z/n\Z, H)$. 
%{\color{red}\st{In this case, we obtain the following.}} 
In order to state the main result in this setting, we need the following notion of orthogonality. An element $a=(a_1,\cdots,a_{n-1})\in L^1(\Z_n,H)$ is said to be orthogonal to $u\in H$ if $\langle a_l,u\rangle=0,\,l=0,\cdots, n-1$. We use $a\perp u$ to say that $a$ is orthogonal to $u$.
\begin{theorem}
    Let $U: L^1(\Z_n, H) \to L^1(\Z_n,H)$ be a bijective transformation (not necessarily linear) such that the map $c\mapsto T(\mathbb{1}_{ce_k})$ is continuous, for any $c\in\C$ and $T(\mathbb{1}_k)$ is a constant. Assume that
    for all $a,b,c \in L^1(\Z_n,H)$ 
    \begin{itemize}
        \item [(i)] $U((a\times b)\smblksquare c)= \frac{1}{n^2}(U(a)\ast U(b))\mdsmblkcircle U(c).$
        \smallskip
        \item [(ii)] $U((a\ast b)\mdsmblkcircle c)=(U(a)\times U(b))\smblksquare U(c).$
        \smallskip
        
 		%\item[(iii)] $U(a)=\sum_{k\in\N}U(\langle a,e_k\rangle e_k)$, where $a$ has the pointwise orthonormal expansion $a_j=\sum_{k\in\N}\langle a_j,e_k\rangle e_k,~j\in\Z_n$.
        
        \smallskip
        
        \item[(iii)] If $a\perp e_k$ for some $k\in\N$, then $T(a)\perp e_k$.
        
        \smallskip
        
       % \item[(iv)] The map $c\mapsto T(\mathbb{1}_{ce_k})$ is continuous, for any $c\in\C$ and $T(\mathbb{1}_k)$ is a constant.
        
        \smallskip
        
 		\item[(iv)] $U(b_k)=b_k$, for all $k\in\N$, where $a_k(j)= e_k$ for all $j=0,1,\cdots,n-1$ and $b_k=a_k+\F a_k$. 
\end{itemize} 		
 		Then there exists $\eta \in \{1,2, \cdots, n-1\}$ that has no common divisor with $n$ such that 
    \begin{itemize}
        \item [-] either, $U a(\eta j)= \F a(j)$ for all $a \in L^1(\Z_n,H)$ and $j\in \{0,1,\cdots, n-1\}.$ 
        
        \smallskip
        
        \item [-] or, $U a(\eta j)= \displaystyle\sum_k \overline{\langle \F ( a)(j), e_k\rangle}e_k$ for all $a\in L^1(\Z_n,H)$ and $j\in \{0,1,\cdots ,n-1\}$.
    \end{itemize}  
    \end{theorem}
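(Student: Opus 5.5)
The plan is to reduce to the scalar result of Jaming quoted above, working one coordinate at a time. Since $U$ exchanges the two convolution structures, we first set $T=\F^{-1}\circ U$, or more conveniently $T = U\circ\F^{-1}$ up to the normalising factor $n$. By the discrete analogues of \eqref{eq: conv th intro} and \eqref{eq: intro conv th 2} (to be proved for $\Z_n$ with constants $1/n^2$ and $1$), $T$ then preserves both $(a\ast b)\mdsmblkcircle c$ and $(a\times b)\smblksquare c$. The convolution theorem on $\Z_n$ will be established directly from the definitions, by finite sums and orthogonality of characters.

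The first step is to show that $U$ respects the coordinate decomposition. Hypothesis (iii), applied to all $k'\neq k$, shows that $U$ maps $\{a: a_j\in \C e_k\ \forall j\}$ into itself. We therefore define scalar maps $U_k: L^1(\Z_n)\to L^1(\Z_n)$ by $U_k(\alpha)e_k = U(\alpha e_k)$; bijectivity of each $U_k$ should follow from bijectivity of $U$ together with (iii) applied to $U^{-1}$ (or to preimages). For $a=\alpha e_k$, $b=\beta e_k$, $c=\gamma e_k$ we have $(a\ast b)\mdsmblkcircle c=((\alpha\ast\widetilde{\beta})\ast\gamma)e_k$-type expressions and $(a\times b)\smblksquare c = (\alpha\cdot\widetilde\beta\cdot\gamma)e_k$. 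Using (i) and (ii), we then derive $U_k$-identities exchanging a triple product with a triple convolution. Next, we extract from these the binary identities of Jaming's theorem. To do so, we insert the element whose $U$-image is controlled by (iv), namely $b_k=a_k+\F a_k$ with $a_k\equiv e_k$ and $\F a_k = n\delta_0 e_k$, and we exploit that constants and deltas are units for product and convolution respectively. The continuity assumption on $c\mapsto U(\mathbb 1_{ce_k})$ and the fact that $U(\mathbb 1_k)$ is constant play the role of Jaming's continuity hypothesis. Once $U_k$ is shown to satisfy $U_k(\alpha\beta)=\frac1n U_k\alpha\ast U_k\beta$ and $U_k(\alpha\ast\beta)=U_k\alpha\cdot U_k\beta$, Jaming's theorem gives $\eta_k$ coprime to $n$ and a choice of $\F$ or $\overline{\F}$ for each $k$.

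The main obstacle is the passage from the ternary identities (i), (ii) to the binary ones. Concretely, we must identify $U_k(\mathbb 1)$ and $U_k(n\delta_0)$, showing they are $n\delta_0$ and $\mathbb 1$ respectively (up to twisting by conjugation). My plan for this step is as follows. First, use (ii) with $a=b=c=\mathbb 1 e_k$, which gives a fixed-point equation $U(\mathbb 1 e_k)=(U\mathbb 1 e_k\times U\mathbb 1 e_k)\smblksquare U\mathbb 1 e_k$ up to constants. Then use (i) with deltas, together with (iv), to pin down these two values separately and exclude the degenerate solutions; the continuity hypothesis excludes the wild solutions $c\mapsto c^{\sigma}$ of the multiplicative Cauchy equation. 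We must also handle the reflection $\widetilde\beta$ hidden in $\times$, which I expect to be absorbed by replacing $\eta$ with $-\eta$.

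The remaining step is to glue the coordinates. We must show that $\eta_k$ and the conjugation choice are independent of $k$. For this, apply (i) and (ii) to mixed elements $a=\alpha e_k$, $b=\beta e_{k}$, $c=\gamma e_{k'}$: the output lies in the $e_{k'}$ coordinate and involves $U_k$ and $U_{k'}$ simultaneously, which forces $\eta_k=\eta_{k'}$ and the same conjugation type. Finally, we pass to general $a=\sum_k \langle a,e_k\rangle e_k$. Writing $a\times a'$-type pairings coordinatewise, we use (i) and (ii) with suitable test elements $b_k$ to show that $\langle U a(\eta j),e_k\rangle$ depends only on $\langle a,e_k\rangle$, via the orthogonality condition (iii) applied to $a-\langle a,e_k\rangle e_k$ after a difference argument, so that $Ua(\eta j)=\sum_k U_k(\langle a,e_k\rangle)(\eta j)e_k$, which yields the two stated alternatives.
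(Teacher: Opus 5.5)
Your route differs from the paper's. The paper never splits into coordinates or invokes Jaming. It works with the vector-valued map in the structure-preserving form of Theorem \ref{T4.4} and determines the images of $\mathbb{1}_k$ and $\delta_j^k$. It then treats arbitrary $a,b$ through $(a\times b)\smblksquare\delta_j^k=(\mathbb{1}_{a(j)}\times\mathbb{1}_{b(j)})\smblksquare\delta_j^k$, which gives $\langle T(a)(\eta j),T(b)(\eta j)\rangle=\beta(\langle a(j),b(j)\rangle)$ with $\beta$ forced to be $c\mapsto c$ or $c\mapsto\bar c$.

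\textbf{First gap: you treat $U$ as additive or homogeneous.} It is assumed to be neither, and this breaks three steps.
\begin{itemize}
\item Since $(\mathbb{1}_k\ast\mathbb{1}_k)\mdsmblkcircle\mathbb{1}_k=n^2\mathbb{1}_k$, (ii) gives $U(n^2\mathbb{1}_k)=(U\mathbb{1}_k\times U\mathbb{1}_k)\smblksquare U\mathbb{1}_k$. The factor $n^2$ cannot be moved through $U$, so there is no fixed-point equation. Your opening reduction ``up to the normalising factor'' has the same defect, since $\F^{-1}((A\ast B)\mdsmblkcircle C)=n^2(a\times b)\smblksquare c$; the paper's ``it is enough to prove Theorem \ref{T4.4}'' also passes over this.
\item Hypothesis (iv) concerns $U$ only at the single element $\mathbb{1}_k+n\delta_0e_k$. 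Without additivity it does not split into information about $U(\mathbb{1}_k)$ and $U(n\delta_0e_k)$ separately.
\item In the last step, (iii) applied to $a-\langle a,e_k\rangle e_k$ tells you about $U(a-\langle a,e_k\rangle e_k)$ only, and nothing about $\langle Ua,e_k\rangle$.
\end{itemize}
The repairs follow the paper. The values of $U$ on units must come, as in its Steps 1 and 3, from identities valid for every $a$ in an $e_k$-line, such as $(a\ast\delta_0e_k)\mdsmblkcircle\delta_0e_k=a$ and $(a\times\mathbb{1}_k)\smblksquare\mathbb{1}_k=a$, together with surjectivity and the constancy hypothesis. The passage to general $a$ must use a ternary identity rather than a difference. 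Note that $(a\ast\delta_0e_k)(\ell)=\langle a(\ell),e_k\rangle$ depends only on $\langle a(\cdot),e_k\rangle$. So once you know $U(\delta_0e_k)=\mu\mathbb{1}_k$ with $\mu\neq0$, applying (ii) with $b=\delta_0e_k$ and $c=U^{-1}(\mathbb{1}_k)$ gives $\langle Ua(\ell),e_k\rangle=\langle U(\langle a,e_k\rangle e_k)(\ell),e_k\rangle$ for all $\ell$. The same identity also gives the surjectivity of each $U_k$, which you had hoped to get from (iii) for $U^{-1}$.

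\textbf{Second gap: your targets contradict each other and the conclusion.}
\begin{itemize}
\item You use that $U(\mathbb{1}_k)$ is constant while aiming to prove $U(\mathbb{1}_k)=n\delta_0e_k$. Since $\F(\mathbb{1}_k)=n\delta_0e_k$ is not constant, that hypothesis can only refer to the transferred operator $T=U\circ\F^{-1}$, where it says $U(\delta_0e_k)$ is constant.
\item Your target $U_k(n\delta_0)=\mathbb{1}$ contradicts what Jaming's theorem then gives, namely $|U_k(n\delta_0)(\eta j)|=n$.
\end{itemize}
The root cause is (iv) itself. With the unnormalised $\F$ used here, Plancherel reads $\sum_j\|\F b(j)\|^2=n\sum_j\|b(j)\|^2$. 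Either alternative of the conclusion therefore forces $\sum_j\|Ub(j)\|^2=n\sum_j\|b(j)\|^2$, so for $n\ge2$ no nonzero $b$ can satisfy $Ub=b$. Concretely, $\F b_k=n\mathbb{1}_k+n\delta_0e_k$, and for $j\neq0$ the conclusion would say $e_k=b_k(\eta j)=U(b_k)(\eta j)=\F b_k(j)=ne_k$.

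Consequences:
\begin{itemize}
\item (iv) is incompatible with the conclusion, so the statement can hold only vacuously. Granting Theorem \ref{th: main discrete} and applying it with $a=b=b_k$, no such $U$ exists.
\item No argument that uses (iv) to pin down $U$ and then identifies it with $\F$ can be completed.
\item The paper itself proves only $\langle U(a)(\eta j),U(b)(\eta j)\rangle=\langle\F a(j),\F b(j)\rangle$ or its conjugate. It states the upgrade through (iv) without proof.
\item A meaningful version would need a correctly normalised fixed point, such as $\mathbb{1}_k+\sqrt{n}\,\delta_0e_k$ for $n^{-1/2}\F$, with the constants in (i) and (ii) adjusted accordingly.
\end{itemize}
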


\section{Convolution of Dirac delta distributions}
%Recall that the convolution of vector valued functions was defined in \cite{Hasan1}. The convolution $(f\ast g)\fisheye h$ is a map from $S_H\times S_H\times S_H$ into $S_H$. We have extended the notion of convolution as a map from $S_H'\times S_H\times S_H$ into $S_H'$. 
Recall that we have defined the convolution $(f\fisheye g)\circledast h$ when $f\in S_H^\prime,g,h\in S_H$. However, convolution with Dirac delta distributions exhibits some interesting properties. In this section, we establish several useful identities that involve such convolutions. These identities play a key role in the proof of our main result, which is also of independent interest and complements the results of \cite{Alesker_paper,Alesker_BC}.
%However, something interesting happens when we take convolution of functions with Dirac delta distributions. In this section, we derive some useful identities involving convolution of Dirac delta distributions. 
We define the derivative of $\Lambda\in S_H'$ as follows.\\
$$\langle D^\alpha\Lambda,\phi\rangle=(-1)^{|\alpha|}\langle\Lambda, D^\alpha\phi\rangle,~~\phi\in S_H.$$ We now show that for any $x,y\in\R^n$, we can define $(\delta_x^k\circledast \delta_y^k)\fisheye \delta_0^k$ appropriately. \\
For $\phi,\,f\in S_H$, recall that $\left \langle (\delta_y^k \fisheye \phi)\circledast f, \psi\right \rangle=\left\langle \delta_y^k, (\psi \ast \widetilde{f})\mdsmblkcircle \widetilde{\phi}\right\rangle$, $\psi\in S_H$. Then, an easy computation shows that  
\begin{equation}\label{eq: delta conv 1}
(\delta_y^k \fisheye \phi)\circledast f(t)=\int_{\R^n} \left\langle e_k, \phi(z-y)\right\rangle f(t-z)\, dz=\int_{\R^n}\langle e_k, \phi(z)\rangle f(t-z-y)dz,
\end{equation}
%By making a change of variables, we can write $(\delta_y^k \fisheye \phi)\circledast f(t)=\phi_k\mdsmblkcircle f(t-y)$, 
from which it follows that $(\delta_y^k \fisheye \phi)\circledast f\in S_H$.
This suggests that, for $\phi\in S_H$, we can define $(\delta_0^k \circledast \phi)\fisheye \delta_y^k$ as follows. 
\begin{align*}
    \left\langle (\delta_0^k \circledast \phi)\fisheye \delta_y^k, f \right\rangle &= \left\langle \delta_0^k, (\delta_y^k \fisheye \widetilde{\phi})\circledast f \right\rangle =\left\langle  (\delta_y^k \fisheye \widetilde{\phi})\circledast f (0), e_k \right\rangle = \int_{\R^n}\left\langle e_k, \phi(y+z)\right\rangle \left\langle f(z), e_k \right\rangle \, dz,
\end{align*}
$f\in S_H$. Further, from the above equality, it is clear that $(\delta_0^k \circledast \phi)\fisheye \delta_y^k\in S_H$. So, we define $((\delta_x^k \circledast \delta_y^k)\fisheye \delta_0^k)\in S_H^\prime$ as follows.
\begin{align*}
    \left\langle (\delta_x^k \circledast \delta_y^k)\fisheye \delta_0^k, \phi \right\rangle
    = \left\langle \delta_x^k , (\delta_0^k \circledast \phi) \fisheye \delta_y^k \right\rangle,~~\text{ for $\phi\in S_H$.}
    %= \left\langle (\delta_0^k \circledast \phi) \fisheye \delta_y^k (x), e_k \right\rangle= \left\langle \phi(x+y), e_k \right\rangle= \left\langle \delta_{x+y}^k , \phi \right\rangle.
\end{align*}
More specifically, we have the following in the context of the convolution of Dirac delta distributions. 
\begin{propn}\label{pro: delta derivative}
For multi-indices $\alpha,\beta$, we have  $(D^\alpha \delta_x^k\circledast D^\beta \delta_y^k)\fisheye \delta_0^k= D^{\alpha+\beta}\delta_{x+y}^k$. In particular,
$( \delta_x^k\circledast  \delta_y^k)\fisheye \delta_0^k= \delta_{x+y}^k.$
\end{propn}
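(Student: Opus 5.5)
The plan is to prove the identity by unwinding the definition of $(\cdot\circledast\cdot)\fisheye\delta_0^k$ given just before the statement. First I will compute explicitly, as a function in $S_H$, the auxiliary object $(\delta_0^k\circledast\phi)\fisheye\delta_y^k$ for $\phi\in S_H$. Then I will pair it with $\delta_x^k$, and afterwards insert derivatives.

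\textbf{Step 1 (the case without derivatives).} The displayed computation preceding the statement gives
\[
\langle(\delta_0^k\circledast\phi)\fisheye\delta_y^k,f\rangle=\int_{\R^n}\langle e_k,\phi(y+z)\rangle\langle f(z),e_k\rangle\,dz .
\]
Comparing this with the embedding $\langle u,f\rangle=\int\langle f(x),u\rangle\,dx$, I identify $(\delta_0^k\circledast\phi)\fisheye\delta_y^k$ with the Schwartz function
\[
\Phi_y(z)=\langle \phi(y+z),e_k\rangle\, e_k .
\]
The inner product is conjugated in the second slot, so $\overline{\langle e_k,\phi(y+z)\rangle}=\langle\phi(y+z),e_k\rangle$. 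Then
\[
\langle(\delta_x^k\circledast\delta_y^k)\fisheye\delta_0^k,\phi\rangle=\langle\delta_x^k,\Phi_y\rangle=\langle\Phi_y(x),e_k\rangle=\langle\phi(x+y),e_k\rangle=\langle\delta_{x+y}^k,\phi\rangle ,
\]
which is the particular case of the statement.

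\textbf{Step 2 (derivatives).} I will define $(D^\alpha\delta_x^k\circledast D^\beta\delta_y^k)\fisheye\delta_0^k$ in exactly the same way:
\[
\langle(D^\alpha\delta_x^k\circledast D^\beta\delta_y^k)\fisheye\delta_0^k,\phi\rangle=\langle D^\alpha\delta_x^k,(\delta_0^k\circledast\phi)\fisheye D^\beta\delta_y^k\rangle ,
\]
where the inner object is defined through $\langle D^\beta\delta_y^k,(\delta_{\cdot}^k\fisheye\widetilde\phi)\circledast f\rangle$ as before. Since $D^\beta\delta_y^k$ acts by $\psi\mapsto(-1)^{|\beta|}\langle D^\beta\psi(y),e_k\rangle$, I need to differentiate \eqref{eq: delta conv 1} in its translation parameter. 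Using the second form of \eqref{eq: delta conv 1}, the parameter enters only through $\phi(z-y)$, so differentiation under the integral moves $D^\beta$ onto $\phi$. This is justified by Schwartz decay and dominated convergence.

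The outcome should be that $(\delta_0^k\circledast\phi)\fisheye D^\beta\delta_y^k$ is the Schwartz function
\[
z\mapsto(-1)^{|\beta|}\langle D^\beta\phi(y+z),e_k\rangle e_k .
\]
Applying $D^\alpha\delta_x^k$ then gives
\[
(-1)^{|\alpha|}\,D_z^\alpha\Big[(-1)^{|\beta|}\langle D^\beta\phi(y+z),e_k\rangle\Big]\Big|_{z=x}=(-1)^{|\alpha+\beta|}\langle D^{\alpha+\beta}\phi(x+y),e_k\rangle .
\]
The right-hand side is exactly $\langle D^{\alpha+\beta}\delta_{x+y}^k,\phi\rangle$.

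\textbf{Main obstacle.} The computations themselves are routine. The delicate point is bookkeeping of signs and conjugations. Each derivative is transported through a reflection $\widetilde\phi$ and a change of variables $z\mapsto z\pm y$, and one must check that the chain rule contributes no extra $(-1)^{|\beta|}$. One must also check that each use of the inner product is in the correct slot, so that the final expression is $\langle D^{\alpha+\beta}\phi(x+y),e_k\rangle$ and not its conjugate. I would verify this first in the case $n=1$ with $|\alpha|=0$ and $|\beta|=1$, and then argue that the general case follows by the same computation applied coordinatewise.
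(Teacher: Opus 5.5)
Your proposal is correct and is essentially the paper's proof. You identify $(\delta_0^k\circledast\phi)\fisheye D^\beta\delta_y^k$ with the Schwartz function $z\mapsto(-1)^{|\beta|}\langle D^\beta\phi(y+z),e_k\rangle e_k$ and then pair with $D^\alpha\delta_x^k$, which is exactly the paper's argument; the only cosmetic difference is that the paper moves $D^\beta$ onto $\phi$ via $D^\beta\bigl((f\ast\widetilde{\psi})\mdsmblkcircle\phi\bigr)=(f\ast\widetilde{\psi})\mdsmblkcircle D^\beta\phi$ rather than by differentiating in the parameter $y$.

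One small correction to your bookkeeping: $y$ enters through $\phi(z-y)$ in the \emph{first} form of \eqref{eq: delta conv 1}, not the second. Since the relevant object carries $\widetilde\phi$, the dependence is actually through $\phi(y-z)$, so no chain-rule sign appears and the only sign is the $(-1)^{|\beta|}$ from the distributional derivative.
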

\begin{proof}
%We define the distribution $(D^\alpha \delta_x^k\circledast D^\beta \delta_y^k)\fisheye \delta_0^k$ as  $\left\langle (D^\alpha \delta_x^k\circledast D^\beta \delta_y^k)\fisheye \delta_0^k, \phi \right\rangle = \left\langle D^\alpha \delta_x^k, (\delta_0^k \circledast \phi)\fisheye D^\beta \delta_y^k \right\rangle $, $\phi\in S_H$. In order to show that this definition makes sense we need to verify that $(\delta_0^k \circledast \phi)\fisheye D^\beta \delta_y^k \in S_H.$
For $f,\,\phi,\,\psi\in S_H$, consider
\begin{align*}
    \left\langle \left(D^\beta \delta_y^k \fisheye \widetilde{\phi}\right)\circledast \psi, f \right\rangle &= \left\langle D^\beta \delta_y^k, \left(f\ast \widetilde{\psi}\right)\mdsmblkcircle \phi \right\rangle\\
    &= (-1)^{|\beta|} \left\langle \delta_y^k, D^\beta \left(\left(f\ast \widetilde{\psi}\right)\mdsmblkcircle \phi\right)\right\rangle\\
    &= (-1)^{|\beta|} \left\langle \delta_y^k, \left(f\ast \widetilde{\psi}\right)\mdsmblkcircle D^\beta \phi \right\rangle\\
    &= (-1)^{|\beta|} \left\langle \left(f\ast \widetilde{\psi}\right)\mdsmblkcircle D^\beta \phi(y), e_k \right\rangle\\
    %&= (-1)^{|\beta|} \left\langle \int_{\R^n}  \int_{\R^n}  \left\langle f(u), \widetilde{\psi}(z-u)\right\rangle D^\beta \phi(y-z)\, du \, dz, e_k \right\rangle\\
    &= \int_{\R^n}  \left\langle f(u), \int_{\R^n}  (-1)^\beta \langle e_k, D^\beta \phi(y-z) \rangle \psi(u-z)\, dz \right\rangle \, du.
\end{align*}
Hence, $(D^\beta \delta_y^k \fisheye \widetilde{\phi})\circledast \psi(u)= (-1)^{|\beta|} \int_{\R^n}  \langle e_k, D^\beta \phi(y-z)\rangle \psi(u-z)\, dz$ holds pointwise. Writing the right hand side as a convolution of $D^\beta\phi$ and $\psi$ it follows that $\left(D^\beta \delta_y^k \fisheye \widetilde{\phi}\right)\circledast \psi\in S_H$. This suggests that we define $\left(\delta_0^k \circledast \phi\right)\fisheye D^\beta \delta_y^k$ for $\phi\in S_H$ as follows.
$$
\left\langle \left(\delta_0^k \circledast \phi\right)\fisheye D^\beta \delta_y^k, \psi \right\rangle = \left\langle \delta_0^k, \left(D^\beta \delta_y^k \fisheye \widetilde{\phi}\right)\circledast \psi \right\rangle,~~\, \text{ for $\psi\in S_H$.}
$$
%Now we can calculate the following. 
% \begin{align*}
%     \left\langle \left(\delta_0^k \circledast \phi\right)\fisheye D^\beta \delta_y^k, \psi \right\rangle &= \left\langle \delta_0^k, \left(D^\beta \delta_y^k \fisheye \widetilde{\phi}\right)\circledast \psi \right\rangle\\
%     &= \left\langle \left(D^\beta \delta_y^k \fisheye \widetilde{\phi}\right)\circledast \psi(0), e_k \right\rangle\\
%     &= \left\langle (-1)^\beta \int \langle e_k, D^\beta \phi(y-z)\rangle \psi(-z)\, dz, e_k \right\rangle\\
%     &= (-1)^\beta \int \left\langle e_k, D^\beta \phi(y-z)\right\rangle \left\langle \psi(-z), e_k \right\rangle \, dz\\
%     &= \int \left\langle \psi(z), (-1)^\beta \left\langle D^\beta \phi (y+z), e_k \right\rangle e_k \right\rangle \, dz
% \end{align*}
Then a simple calculation shows that we can write $(\delta_0^k \circledast \phi)\fisheye D^\beta \delta_y^k(z)= (-1)^{|\beta|} \langle D^\beta \phi (y+z), e_k \rangle e_k$ pointwise, from which it is clear that $(\delta_0^k \circledast \phi)\fisheye D^\beta \delta_y^k\in S_H$. Thus we define $\left(D^\alpha \delta_x^k\circledast D^\beta \delta_y^k\right)\fisheye \delta_0^k$ as follows. 
$$
\left\langle \left(D^\alpha \delta_x^k\circledast D^\beta \delta_y^k\right)\fisheye \delta_0^k, \phi \right\rangle = \left\langle D^\alpha \delta_x^k, (\delta_0^k \circledast \phi)\fisheye D^\beta \delta_y^k \right\rangle,~~\text{ for $\phi\in S_H$}.
$$
Then
\begin{align*}
    \left\langle \left(D^\alpha \delta_x^k\circledast D^\beta \delta_y^k\right)\fisheye \delta_0^k, \phi \right\rangle &= \left\langle D^\alpha \delta_x^k, (\delta_0^k \circledast \phi)\fisheye D^\beta \delta_y^k \right\rangle\\
    &= (-1)^{|\alpha|} \left\langle \delta_x^k, D^\alpha \left(\left(\delta_0^k\circledast \phi\right)\fisheye D^\beta \delta_y^k\right)\right\rangle\\
    &= (-1)^{|\alpha|}\left\langle D^\alpha \left(\left(\delta_0^k\circledast \phi\right)\fisheye D^\beta \delta_y^k\right)(x), e_k \right\rangle\\
    %&= (-1)^{|\alpha|}\left\langle D^\alpha \left((-1)^{|\beta|}\left\langle D^\beta \phi(y+.), e_k \right\rangle e_k\right)(x), e_k \right\rangle\\
    &= (-1)^{|\alpha|}\left\langle (-1)^{|\beta|}\left\langle D^{\alpha+\beta}\phi(y+x), e_k \right\rangle e_k, e_k \right\rangle\\
    %&=(-1)^{|\alpha|+|\beta|}\langle D^{\alpha+\beta}\phi(y+x), e_k \rangle\\
     %&= (-1)^{|\alpha+\beta|}\langle \delta_{x+y}^k, D^{\alpha+\beta}\phi \rangle\\
     &=\langle D^{\alpha+\beta}\delta_{x+y}^k, \phi\rangle,
\end{align*}
using $D^\alpha (g \fisheye \Delta)= D^\alpha g \fisheye \Delta,\, (g \fisheye D^\beta \delta_y^k)= (-1)^{|\beta|}(D^\beta g \fisheye \delta_y^k)$ and $D^\alpha (\delta_0^k \circledast \phi)= \delta_0^k\circledast D^\alpha \phi$, which are easy to verify.
\end{proof}

We define $(\delta_y^k\circledast \delta_0^k)\fisheye \psi_1 $ as follows.
$$
\left\langle (\delta_y^k\circledast \delta_0^k)\fisheye \psi_1, \phi \right\rangle:= \left\langle \delta_0^k,  (\delta_y^k\fisheye\widetilde{\psi_1})\circledast \phi \right\rangle,~~\phi\in S_H.
$$
Then a similar computations as above shows that $(\delta_y^k \circledast \delta_0^k)\fisheye \psi_1(x)= \left\langle \psi_1(x+y), e_k \right\rangle e_k$, from which it follows that  $(\delta_y^k \circledast \delta_0^k)\fisheye \psi_1\in S_H.$ We now show that following equality holds.
\begin{equation}\label{eq: conv asso 1}
(((\delta_x^k\circledast \delta_y^k)\fisheye \delta_0^k)\fisheye \psi_1)\circledast \psi_2=\left(\delta_x^k\fisheye \left(\left(\delta_y^k\circledast \delta_0^k\right)\fisheye \widetilde{\psi_1}\right)\widetilde{}\,\right)\circledast \psi_2.
\end{equation}
In order to see this, consider
\begin{align*}
    \left  \langle \left(\left(\left(\delta_x^k\circledast \delta_y^k\right)\fisheye \delta_0^k\right)\fisheye \psi_1\right)\circledast \psi_2, \phi \right \rangle &= \left  \langle \left(\delta_{x+y}^k\fisheye \psi_1\right)\circledast \psi_2, \phi \right\rangle \\
    &= \left  \langle \delta_{x+y}^k, \left(\phi\ast \widetilde{\psi_2}\right)\mdsmblkcircle \widetilde{\psi_1}\right\rangle\\
    &= \left  \langle \left(\phi\ast \widetilde{\psi_2}\right)\mdsmblkcircle \widetilde{\psi_1}(x+y), e_k \right\rangle\\
    &= \left  \langle \int_{\R^n} (\phi\ast \widetilde{\psi_2})(z)\widetilde{\psi_1}(x+y-z)\, dz, e_k \right\rangle\\
    %&= \left\langle \int_{\R^n}  \int_{\R^n}  \left\langle \phi(u), \widetilde{\psi_2}(z-u)\right\rangle \widetilde{\psi_1}(x+y-z)\, du \, dz, e_k \right\rangle\\
    &=  \int_{\R^n} \int_{\R^n} \left  \langle \phi(u), \widetilde{\psi_2}(z-u)\right\rangle \left  \langle  \widetilde{\psi_1}(x+y-z), e_k \right\rangle \, du \, dz.
\end{align*}
On the other hand,
\begin{align*}
&\left  \langle\left(\delta_x^k\fisheye \left(\left(\delta_y^k\circledast \delta_0^k\right)\fisheye \widetilde{\psi_1}\right)\widetilde{}\,\right)\circledast \psi_2, \phi \right\rangle \\&= \left  \langle \delta_x^k, \left(\phi \ast \widetilde{\psi_2}\right)\mdsmblkcircle \left(\left(\delta_y^k\circledast \delta_0^k\right)\fisheye \widetilde{\psi_1}\right)\right\rangle\\
    &= \left  \langle  \left(\phi \ast \widetilde{\psi_2}\right)\mdsmblkcircle \left(\left(\delta_y^k\circledast \delta_0^k\right)\fisheye \widetilde{\psi_1}\right)(x), e_k \right\rangle\\
    &= \left  \langle \int_{\R^n}   \left(\phi \ast \widetilde{\psi_2}\right)(z) \left(\left(\delta_y^k\circledast \delta_0^k\right)\fisheye \psi_1\right)(x-z)\, dz, e_k \right\rangle\\
    &= \left  \langle \int_{\R^n}  \int_{\R^n}  \left  \langle \phi(u), \widetilde{\psi_2}(z-u)\right\rangle \left(\left(\delta_y^k\circledast \delta_0^k\right)\fisheye \widetilde{\psi_1}\right)(x-z)\, du \, dz, e_k \right\rangle\\
    &= \int_{\R^n}  \int_{\R^n}  \left  \langle \phi(u), \widetilde{\psi_2}(z-u)\right\rangle \left  \langle \left(\left(\delta_y^k\circledast \delta_0^k\right)\fisheye \widetilde{\psi_1}\right)(x-z), e_k \right\rangle \, du \, dz\\
    &=  \int_{\R^n}  \int_{\R^n}  \left  \langle \phi(u), \widetilde{\psi_2}(z-u)\right\rangle \left  \langle \left  \langle \widetilde{\psi_1}(y+x-z), e_k \right\rangle e_k, e_k \right\rangle \, du \, dz\\
    &= \int_{\R^n}  \int_{\R^n}  \left  \langle \phi(u), \widetilde{\psi}_2(z-u)\right\rangle \left  \langle \widetilde{\psi_1}(x+y-z), e_k \right\rangle \, du \, dz,
\end{align*}
which establishes \eqref{eq: conv asso 1}.
%Hence, pointwise we can write $ (\delta_0^k \circledast \phi)\fisheye \delta_y^k(z)= \left\langle \phi(y+z), e_k \right\rangle e_k$.
% Observe that
% \begin{align*}
%     (\delta_0^k \fisheye \psi_1)\circledast f_1(y) = \int \left\langle e_k, \psi_1(x)\right\rangle f_1(y-x)\, dx= \int g(x)f_1(y-x)\, dx= g\mdsmblkcircle f_1(y), 
% \end{align*}
% Where $g(x)= \left\langle e_k, \psi_1 \right\rangle$. Hence, $(\delta_0^k \fisheye \psi_1)\circledast f_1 \in S_H $
Similarly, we can show that 
\begin{equation}\label{eq: conv asso 2}
\left(\delta_y^k \fisheye \left(\left(\delta_0^k\fisheye \psi_1\right)\circledast f_1\right)\right)\circledast f_2=\left(\left(\left(\delta_y^k\circledast \delta_0^k\right)\fisheye \widetilde{\psi_1}\right)\widetilde{ }\,\ast f_1\right)\mdsmblkcircle f_2.
\end{equation}

\par We now prove some associativity, which we shall use in proving the main result.

\begin{lemma}
    For $\psi_1, f_1, f_2\in S_H$, we have the following equalities.\\ 
\begin{align}
(i) \label{eq: conv asso 3}\left(D^\alpha \delta_x^k\fisheye \left(\left(\delta_0^k\fisheye \psi_1\right)\circledast f_1\right)\right)\circledast f_2&= \left(\left(\left(D^\alpha \delta_x^k\circledast \delta_0^k\right)\fisheye \widetilde {\psi_1}\right)\widetilde{}~\,\ast f_1\right)\mdsmblkcircle f_2.\\
(ii)\label{eq: conv asso 4}
\left(D^\alpha\delta_x^k\fisheye \left(\left(D^\beta \delta_y^k\circledast \delta_0^k\right)\fisheye \widetilde {f_1}\right)\widetilde{}~\, \right) \circledast f_2&= \left(\left(\left(D^\alpha \delta_x^k \circledast D^\beta \delta_y^k\right)\fisheye \delta_0^k\right)\fisheye f_1\right)\circledast f_2. 
\end{align}
\end{lemma}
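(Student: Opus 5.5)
Both identities are equalities in $S_H^\prime$, and I will prove them the way \eqref{eq: conv asso 1} was proved. In each case I pair both sides with an arbitrary test function $\phi\in S_H$, unwind the definitions \eqref{eq: intro conv def} of $(\cdot\fisheye\cdot)\circledast\cdot$ and of the derivative $D^\alpha$, and reduce each side to the same explicit iterated integral.

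Before pairing, I compute pointwise formulas for the inner objects, exactly as in the proof of Proposition \ref{pro: delta derivative}. First, the argument given for \eqref{eq: delta conv 1} with $y=0$ gives
\[
(\delta_0^k\fisheye\psi_1)\circledast f_1(t)=\int_{\R^n}\langle e_k,\psi_1(z)\rangle f_1(t-z)\,dz .
\]
This lies in $S_H$, since it is the $\mdsmblkcircle$-convolution of the scalar Schwartz function $\langle e_k,\psi_1\rangle$ with $f_1$. Second, I define $(D^\alpha\delta_x^k\circledast\delta_0^k)\fisheye\psi$ and $(D^\alpha\delta_x^k\circledast D^\beta\delta_y^k)\fisheye\delta_0^k$ by the same duality as in Section 2. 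The computation with $D^\beta\phi$ in the proof of Proposition \ref{pro: delta derivative} then gives
\[
(D^\alpha\delta_x^k\circledast\delta_0^k)\fisheye\psi(t)=(-1)^{|\alpha|}\langle D^\alpha\psi(t+x),e_k\rangle e_k .
\]
The more general form is $(D^\beta\delta_y^k\circledast\delta_0^k)\fisheye\psi(t)=(-1)^{|\beta|}\langle D^\beta\psi(t+y),e_k\rangle e_k$. For $\psi\in S_H$ both are again in $S_H$, so every expression in (i) and (ii) is well defined.

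For (i), I pair the left side with $\phi$. This gives $\langle D^\alpha\delta_x^k,(\phi\ast\widetilde{f_2})\mdsmblkcircle\widetilde{g}\rangle$, where $g=(\delta_0^k\fisheye\psi_1)\circledast f_1$. I move $D^\alpha$ onto $\widetilde g$, using $D^\alpha(h\mdsmblkcircle u)=h\mdsmblkcircle D^\alpha u$, and then evaluate at $x$ against $e_k$. Substituting the pointwise formula for $g$ produces a triple integral of the form
\[
(-1)^{|\alpha|}\iiint\langle\phi(u),f_2(u-z)\rangle\langle e_k,\psi_1(w)\rangle\langle D^\alpha\widetilde{f_1}(x-z-w),e_k\rangle\,du\,dz\,dw,
\]
with some care about which factor carries the derivative and the reflection sign. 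For the right side, the definition of $(F\ast f_1)\mdsmblkcircle f_2$ for functions, together with the pointwise formula above applied to $\psi=\widetilde{\psi_1}$ (and then reflected), gives an integral of the same shape. The derivative now falls on $\psi_1$ rather than on $f_1$. I then reconcile the two sides by a change of variables in the $w$-integral and an integration by parts in $w$. The integration by parts moves $D^\alpha$ from $\psi_1$ to $f_1$, produces the factor $(-1)^{|\alpha|}$, and has no boundary terms because both functions are Schwartz. Fubini is justified throughout by rapid decay.

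For (ii), the left side pairs to $(-1)^{|\alpha|}\langle D^\alpha\big((\phi\ast\widetilde{f_2})\mdsmblkcircle G\big)(x),e_k\rangle$, where $G=\big((D^\beta\delta_y^k\circledast\delta_0^k)\fisheye\widetilde{f_1}\big)\widetilde{}$ has the explicit form $(-1)^{|\beta|}\langle D^\beta\widetilde{f_1}(y-t),e_k\rangle e_k$, up to the reflection. For the right side, Proposition \ref{pro: delta derivative} replaces $(D^\alpha\delta_x^k\circledast D^\beta\delta_y^k)\fisheye\delta_0^k$ by $D^{\alpha+\beta}\delta_{x+y}^k$. The right side then pairs to $(-1)^{|\alpha+\beta|}\langle D^{\alpha+\beta}\big((\phi\ast\widetilde{f_2})\mdsmblkcircle\widetilde{f_1}\big)(x+y),e_k\rangle$. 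Pushing all derivatives onto the $\widetilde{f_1}$ factor in both expressions should make them coincide after the shift $t\mapsto t+y$, mirroring the end of the proof of \eqref{eq: conv asso 1}.

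The main obstacle is keeping track of the signs $(-1)^{|\alpha|}$ produced by the reflections $\widetilde{\cdot}$ (note $D^\alpha\widetilde u=(-1)^{|\alpha|}\widetilde{D^\alpha u}$) and of which factor the derivative sits on. I will handle this by fixing, at the start, the convention that all derivatives are moved onto the scalar function $\langle\,\cdot\,,e_k\rangle$ built from $\psi_1$ (respectively $f_1$), and then comparing the sign at the very end.
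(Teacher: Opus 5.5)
Your strategy of pairing both sides with $\phi\in S_H$, unwinding \eqref{eq: intro conv def} and the distributional derivative, and comparing explicit integrals works, and it is a genuinely different route from the paper's. However, two of your intermediate formulas are off by a reflection, and the one in (ii) would make the final comparison fail if followed literally.

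\textbf{Part (ii).} By \eqref{eq: intro conv def},
\[
\langle (D^\alpha\delta_x^k\fisheye G)\circledast f_2,\phi\rangle=\langle D^\alpha\delta_x^k,(\phi\ast\widetilde{f_2})\mdsmblkcircle\widetilde{G}\rangle .
\]
So the function that enters is $\widetilde G=(D^\beta\delta_y^k\circledast\delta_0^k)\fisheye\widetilde{f_1}$, i.e.\ $\widetilde G(t)=(-1)^{|\beta|}\langle D^\beta\widetilde{f_1}(t+y),e_k\rangle e_k$, not $G$. Using $G$ gives $(-1)^{|\beta|}\int(\phi\ast\widetilde{f_2})(z)\langle D^{\alpha+\beta}\widetilde{f_1}(y-x+z),e_k\rangle\,dz$, which is not the right side in general. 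Using $\widetilde G$, the left side is
\[
(-1)^{|\alpha|+|\beta|}\int_{\R^n}(\phi\ast\widetilde{f_2})(z)\,\langle D^{\alpha+\beta}\widetilde{f_1}(x+y-z),e_k\rangle\,dz .
\]
This is exactly your expression for the right side, so no shift or integration by parts is needed.

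\textbf{Part (i).} Here $\widetilde g(t)=\int\langle e_k,\psi_1(w)\rangle\,\widetilde{f_1}(t+w)\,dw$, so the argument in your triple integral should be $x-z+w$, not $x-z-w$. Put $a=\langle e_k,\psi_1(\cdot)\rangle$ and $b=\langle f_1(\cdot),e_k\rangle$. Both sides then equal $(-1)^{|\alpha|}\iint\langle\phi(u),f_2(u-z)\rangle\,I(z)\,du\,dz$, where
\[
I(z)=\int a(w)\,(D^\alpha\widetilde b)(x-z+w)\,dw \ \text{(left side)},\qquad I(z)=\int(D^\alpha\widetilde a)(x-y)\,b(z-y)\,dy \ \text{(right side)}.
\]
The left integral equals $(-1)^{|\alpha|}\int a(w)\,(D^\alpha b)(z-x-w)\,dw$ directly. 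The right one becomes the same after substituting $y=x+w$ and integrating by parts once, as you intended.

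\textbf{Comparison with the paper.} The paper never expands the full pairings. For (i), it computes the pointwise form of $(D^\alpha\delta_x^k\circledast\delta_0^k)\fisheye\widetilde{\psi_1}$. It then pulls $D^\alpha$ outside using $D^\alpha((\Delta\fisheye f)\circledast g)=(D^\alpha\Delta\fisheye f)\circledast g$, applies the undifferentiated identity \eqref{eq: conv asso 2}, and moves $D^\alpha$ back onto the reflected factor at the cost of $(-1)^{|\alpha|}$. For (ii), it writes $(D^\alpha\delta_x^k\circledast D^\beta\delta_y^k)\fisheye\delta_0^k=D^{\alpha+\beta}((\delta_x^k\circledast\delta_y^k)\fisheye\delta_0^k)$ via Proposition \ref{pro: delta derivative}, and reduces to \eqref{eq: conv asso 1} with the same commutation rules.

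That route is shorter and shows the derivative cases are formal consequences of the $\alpha=\beta=0$ ones. It rests, though, on several commutation rules declared easy to verify, and on \eqref{eq: conv asso 2}, which the paper only asserts by analogy with \eqref{eq: conv asso 1}. Your computation is self-contained from the definitions, the pointwise formulas, and (in (ii)) Proposition \ref{pro: delta derivative}. It does not depend on \eqref{eq: conv asso 1} or \eqref{eq: conv asso 2}. Its price is exactly the reflection bookkeeping where the slips above occurred.
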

\begin{proof}
(i) Observe that, for $\phi\in S_H$, $$(\delta_x^k\fisheye D^\alpha\psi_1)\circledast \phi(t)=\int_{\R^n} \left\langle e_k, D^\alpha \psi_1(s-x)\right\rangle \phi(t-s)\, ds.$$ Then,
\begin{align*}
    \left\langle \left(D^\alpha\delta_x^k\circledast \delta_0^k\right)\fisheye \widetilde {\psi_1}, \phi\right\rangle &= \left\langle\delta_0^k, \left( D^\alpha\delta_x^k\fisheye\widetilde {\widetilde {\psi_1}}\right)\circledast\phi\right\rangle\\
    &=\left\langle\left(D^\alpha\delta_x^k\fisheye\psi_1\right)\circledast\phi(0),e_k\right\rangle\\
    &=\left\langle\left(\delta_x^k\fisheye D^\alpha\psi_1\right)\circledast\phi(0),e_k\right\rangle\\
    &= \left\langle\int_{\R^n} \left\langle e_k, D^\alpha\psi_1(s-x)\right\rangle\phi(-s)\, ds, e_k\right\rangle\\
    &=\int_{\R^n} \left\langle e_k, D^\alpha\psi_1(s-x)\right\rangle\left\langle\phi(-s),e_k\right\rangle\, ds\\
    &= \int_{\R^n} \left\langle e_k, \widetilde{D^\alpha\psi_1}(s+x)\right\rangle\left\langle\phi(s),e_k\right\rangle\, ds\\
    &= \int_{\R^n} \left\langle\phi(s),\left\langle \widetilde{D^\alpha\psi_1}(s+x),e_k \right\rangle e_k\right\rangle\, ds.
\end{align*}
So, pointwise we can write $\left(D^\alpha\delta_x^k\circledast \delta_0^k\right)\fisheye \widetilde {\psi_1}(s)= \left\langle \widetilde{D^\alpha\psi_1}(s+x),e_k \right\rangle e_k= (-1)^{|\alpha|}\left\langle D^\alpha \widetilde {\psi_1}(s+x),e_k \right\rangle e_k.$

% $$D^\alpha ((\delta_x^k \circledast \delta_0^k)\fisheye \widetilde {\psi_1})= \langle D^\alpha \widetilde {\psi_1}(x+.), e_k \rangle e_k.$$
Now, using $D^\alpha((\Delta\fisheye f)\circledast g)= (D^\alpha \Delta \fisheye f)\circledast g$ for all $\Delta\in S_H'$, $f,g \in S_H$ and \eqref{eq: conv asso 2} it follows that
\begin{align*}
    \left(D^\alpha \delta_x^k \fisheye \left(\left( \delta_0^k \fisheye \psi_1\right)\circledast f_1\right)\right)\circledast f_2 &= D^\alpha \left( \left( \delta_x^k \fisheye \left(\left( \delta_0^k\fisheye \psi_1\right)\circledast f_1\right)\right)\circledast f_2\right) \\
    &= D^\alpha \left(\left(\left(\left(\delta_x^k\circledast \delta_0^k\right)\fisheye\widetilde {\psi_1}\right)\widetilde{}~\,\ast f_1\right)\mdsmblkcircle f_2\right)\\
    &= \left( D^\alpha \left(\left(\left(\delta_x^k\circledast \delta_0^k\right)\fisheye\widetilde {\psi_1}\right)\widetilde{}~\,\right)\ast f_1\right)\mdsmblkcircle f_2\\
    &= (-1)^{|\alpha|} \left(\left( D^\alpha \left(\left(\delta_x^k\circledast \delta_0^k\right)\fisheye\widetilde {\psi_1}\right)\right)\widetilde{}~\,\ast f_1\right)\mdsmblkcircle f_2\\
    &= (-1)^{|\alpha|} \left(\left(\left\langle D^\alpha \widetilde {\psi_1}(x+\cdot), e_k\right\rangle e_k\right)\widetilde{}~\,\ast f_1\right)\mdsmblkcircle f_2\\
    &=  \left(\left((-1)^{|\alpha|}\left\langle D^\alpha \widetilde {\psi_1}(x+\cdot), e_k\right\rangle e_k\right)\widetilde{}~\,\ast f_1\right)\mdsmblkcircle f_2\\
    &= \left(\left(\left( D^\alpha \delta_x^k\circledast \delta_0^k\right)\fisheye \widetilde {\psi_1}\right)\widetilde{}~\,\ast f_1\right)\mdsmblkcircle f_2.
\end{align*}

%Using the definition of derivative and the continuity of the inner product, we can calculate that $D^\alpha ((\delta_x^k \circledast \delta_0^k)\fisheye \widetilde {\psi_1})= \langle D^\alpha \widetilde {\psi_1}(x+.), e_k \rangle e_k$. We have used it on the above. 

(ii) Appealing to Proposition \ref{pro: delta derivative}, we have
$$D^{\alpha+\beta}\left(\left(\delta_x^k\circledast\delta_y^k\right)\fisheye\delta_0^k\right)= \left(D^\alpha\delta_x^k\circledast D^\beta\delta_y^k\right)\fisheye \delta_0^k.$$
Using this and an easy calculations yields,
\begin{align*}
    \left( D^\alpha \delta_x^k \fisheye \left(\left( D^\beta\delta_y^k \circledast \delta_0^k\right)\fisheye \widetilde {f_1}\right)\widetilde{}~\, \right)\circledast f_2 &= D^\alpha \left(\delta_x^k \fisheye \left(\left( D^\beta\delta_y^k \circledast \delta_0^k\right)\fisheye \widetilde {f_1}\right)\widetilde{}~\, \right)\circledast f_2\\
    &= D^\alpha \left(\delta_x^k \fisheye \left((-1)^{|\beta|}D^\beta\left(\left( \delta_y^k \circledast \delta_0^k\right)\fisheye \widetilde {f_1}\right)\right)\widetilde{}~\, \right)\circledast f_2\\
    &= D^\alpha \left(\delta_x^k \fisheye D^\beta\left(\left(\left( \delta_y^k \circledast \delta_0^k\right)\fisheye \widetilde {f_1}\right)\right)\widetilde{}~\, \right)\circledast f_2\\
    &= D^{\alpha+\beta} \left(\delta_x^k \fisheye \left(\left(\delta_y^k \circledast \delta_0^k\right)\fisheye \widetilde {f_1}\right)\widetilde{}~\, \right)\circledast f_2\\
    &= D^{\alpha+\beta} \left(\left(\delta_x^k \fisheye \left(\left(\delta_y^k \circledast \delta_0^k\right)\fisheye \widetilde {f_1}\right)\widetilde{}~\, \right)\circledast f_2\right)\\
    &= D^{\alpha+\beta}\left(\left(\left(\left(\delta_x^k\circledast\delta_y^k\right)\fisheye\delta_0^k\right)\fisheye f_1\right)\circledast f_2\right)\\
    &= \left(D^{\alpha+\beta}\left(\left(\delta_x^k\circledast\delta_y^k\right)\fisheye\delta_0^k\right)\fisheye f_1\right)\circledast f_2\\
    &= \left(\left(\left(D^\alpha\delta_x^k\circledast D^\beta\delta_y^k\right)\fisheye \delta_0^k\right)\fisheye f_1\right)\circledast f_2.
\end{align*}
This completes the proof of part $(ii)$.
\end{proof}

\section{Proof of Theorem \ref{th: main th r} and Theorem \ref{th: main th r 1}}
In this section we prove the main result in the continuous case. We first prove Theorem \ref{th: main th r 1} and then as a consequence we obtain Theorem \ref{th: main th r}.
\begin{proof}[Proof of Theorem \ref{th: main th r 1}] We present the proof in the following steps.
% We assume \[\left\langle f(x), e_k\right\rangle=\left\langle e_k,f(x)\right\rangle,~f\in S_H.\]    This is used only in step 1.

%\begin{enumerate}
{\textbf{Step 1:}}
In this step, 
   %we find the action of $T$ on the constant function $0,\,e_k$, and $\delta_0^k\in S_H^\prime$. In fact, 
we investigate the image of $T$ on $0$ and $e_k$, $k\in\N$. Using the hypothesis (iii), it is easy to see that $T(0)=0$.

Recall that we have assumed that  $Te_k \in H$. Appealing to the hypothesis (iii), it follows that $Te_k= \alpha_k e_k$ for some $\alpha_k\in \C$. Now our aim is to get some information about $\alpha_k$. In order to do so, first we consider the interesting subclass $S_1$ of $S_H'$, consisting of $\Delta \in S_H'$ such that $(\Delta\blackinwhitesquare f)\otimes g \in S_H$ for all $f,g \in S_H.$ Then it is easy to see that $v\in S_1$ for any $v\in H$. Afterwards, we extend the notion of product for $\Delta'\in S_1$, $\Delta\in S_H'$ and $\phi\in S_H$ as follows. 
$$\left\langle \left(\Delta \otimes \phi\right)\blackinwhitesquare \Delta', f\right\rangle= \left\langle \Delta, \left(\Delta'\blackinwhitesquare \widetilde {\phi}\right)\otimes f \right\rangle,~~f\in S_H.$$

%{\color{red} \st{Now we take $\Delta=e_k=\Delta'$. Then a simple calculations shows that}} 
It is easy to see that $e_k\in S_1$, for all $k\in\N$. Moreover, using the above definition, it follows that $(e_k\otimes\phi)\blackinwhitesquare e_k(x)= \langle \phi(x), e_k\rangle e_k,$ which implies that $(e_k\otimes\phi)\blackinwhitesquare e_k\in S_H$. This suggests that, for $\Delta\in S_H'$, we can define $(\Delta\otimes e_k)\blackinwhitesquare e_k$ as $\langle (\Delta\otimes e_k)\blackinwhitesquare e_k, \phi\rangle = \langle \Delta, (e_k \otimes \phi)\blackinwhitesquare e_k\rangle,$ $\phi\in S_H.$ Then it is easy to see that $(\alpha_k e_k\otimes e_k)\blackinwhitesquare e_k= \alpha_k e_k.$ Further, using the definition of the product, we can show that
\begin{equation}\label{eq: T ONB 1}
\left(\left(\left(e_k\otimes \psi_1\right)\blackinwhitesquare e_k\right)\times f\right)\smblksquare g= \left(e_k \blackinwhitesquare \left(\left(e_k\blackinwhitesquare \psi_1\right)\otimes f\right)\right)\otimes g,\,\,\, \psi_1, f, g\in S_H,
\end{equation}
which in turn implies that
$$
T\left(\left(e_k\otimes \psi_1\right)\blackinwhitesquare e_k\right)= \left(Te_k \otimes T\psi_1\right)\blackinwhitesquare Te_k.
$$
Furthermore, we can show that
$$\left(\left(\left(e_k\otimes e_k\right)\blackinwhitesquare e_k\right)\blackinwhitesquare \psi_1\right)\otimes \psi_2= \left(e_k\blackinwhitesquare\left(\left(e_k\otimes \psi_1\right)\blackinwhitesquare e_k\right)\right)\otimes \psi_2,~~\,\psi_1,\,\psi_2\in S_H.$$ 
We now claim that $T((e_k\otimes e_k)\blackinwhitesquare e_k)= (Te_k\otimes Te_k)\blackinwhitesquare Te_k.$ Indeed, for $\psi_1,\psi_2\in S_H,$ consider
\begin{align*}
\left(T\left(\left(e_k\otimes e_k\right)\blackinwhitesquare e_k\right)\blackinwhitesquare T\psi_1\right)\otimes T\psi_2
&= T\left(\left(\left(\left(e_k\otimes e_k\right)\blackinwhitesquare e_k\right)\blackinwhitesquare \psi_1\right)\otimes \psi_2\right)\\
&= T\left(\left(e_k\blackinwhitesquare\left(\left(e_k\otimes \psi_1\right)\blackinwhitesquare e_k\right)\right)\otimes \psi_2\right)\\
%&= \left(T\Delta \blackinwhitesquare T\left(\left(e_k\otimes \psi_1\right)\blackinwhitesquare e_k\right)\right)\otimes T\psi_2\\
&= \left(Te_k \blackinwhitesquare \left(\left(Te_k\otimes T\psi_1\right)\blackinwhitesquare Te_k\right)\right)\otimes T\psi_2\\
%&= \left(T\Delta \blackinwhitesquare \left(\left(ce_k\otimes T\psi_1\right)\blackinwhitesquare ce_k\right)\right)\otimes T\psi_2\\
&= |\alpha_k|^2 \left(Te_k \blackinwhitesquare \left(\left(e_k\otimes T\psi_1\right)\blackinwhitesquare e_k\right)\right)\otimes T\psi_2\\
        &= |\alpha_k|^2 \left(\left(\left(\left(Te_k\otimes e_k\right)\blackinwhitesquare e_k\right)\blackinwhitesquare T\psi_1\right)\otimes T\psi_2\right)\\
        %&= \left(\left(\left(\left(T\Delta\otimes ce_k\right)\blackinwhitesquare ce_k\right)\blackinwhitesquare T\psi_1\right)\otimes T\psi_2\right)\\
        &= \left(\left(\left(\left(Te_k \otimes Te_k\right)\blackinwhitesquare Te_k\right)\blackinwhitesquare T\psi_1\right)\otimes T\psi_2\right).
    \end{align*}
Now, using the identity, $(e_k\otimes e_k)\blackinwhitesquare e_k=e_k$, it follows that $\alpha_k|\alpha_k|^2=\alpha_k$. In other words, $|\alpha_k|=1$.\\
In what follows, we shall take $\alpha_k=1$ for simplicity, as for any $\alpha_k$ with $|\alpha_k|=1$ the same proof will work. 
       \vspace{10pt}

\textbf{Step 2:} In this step we show that T maps constants to constants. Let $u\in H$. We shall show that $T(u)$ is also a constant. In order to prove this choose $f,g \in S_\C$ such that $\int_{\R^n}f(x)dx=1=\int_{\R^n}g(x)dx$. Let $f_k(x)=f(x)e_k$ and $g_u(x)=g(x)u$. 
Then for $\phi\in S_H$,
\begin{equation}\label{eq: T constant 1}
\left\langle (e_k\fisheye f_k)\circledast g_u,\phi\right\rangle=\left\langle e_k,(\phi\ast \widetilde{g_u}\mdsmblkcircle\widetilde{f_k})\right\rangle=\int_{\R^n}\int_{\R^n}\int_{\R^n}\langle \phi(z), g(z-y)u\rangle f(y-x)dxdydz=\int_{\R^n}\langle\phi(z),u\rangle dz.
\end{equation}
In other words, $(e_k\fisheye f_k)\circledast g_u= u$, as elements of $S_H'$, which implies that
$T(u)= (e_k\fisheye Tf_k)\circledast Tg_u$.
Now, appealing to the hypothesis (iii) we have
$Tf_k=he_k$ for some $h \in S_\C$. 

% Then for $\psi\in S_\C$, proceeding as in \eqref{eq: constant r 1}, we get
% $$
% \left\langle e_k\fisheye Tf_k,\psi\right\rangle=\int_{\R^n}h(x)dx\int_{\R^n}\psi(y)dy=c\int_{\R^n}\psi(y)dy,
% $$
%where $c=\int_{\R^n}h(x)dx$.
% \begin{align*}
%     e_k\fisheye Tf_k(x)= \left\langle \int Tf_k(y)dy, e_k \right\rangle
%     %&= \left\langle \int g(y)e_kdy, e_k \right\rangle\\
%     = \int \left\langle g(y)e_k, e_k \right\rangle dy
%     %&= \int g(y)\left\langle e_k, e_k \right\rangle\\
%     = \int g(y)dy
%     = c \in \C, \,\,\, \text{ (a constant )}.
% \end{align*}
Then for $\phi\in S_H$, proceeding as in \eqref{eq: T constant 1}, we obtain
\begin{align*}
    \left\langle (e_k\fisheye Tf_k)\circledast Tg_u, \phi \right\rangle 
    %&= \left\langle \Delta_c\circledast Tg_u,\phi \right\rangle\\
    %&= \left\langle e_k\fisheye Tf_k, \phi \ast \widetilde{Tg_u}\right\rangle\\
    %&= c\int_{\R^n} \phi \ast \widetilde{Tg_u}(x) dx\\
    %&= c \int \int \left\langle \phi(y), \tau_y \widetilde{Tg_u}(x)\right\rangle dy dx\\
    %&= c \int_{\R^n} \int_{\R^n} \left\langle \phi(y), Tg_u(y-x)\right\rangle dy dx\\
    %&= c \int \left\langle \phi(y), \int Tg_u(y-x)dx\right\rangle dy\\
    %&= \int \left\langle \phi(y), \Bar{c}\int Tg_u(y-x)dx\right\rangle dy\\
    %&= \int_{\R^n} \left\langle \phi(y), \Bar{c}\int_{\R^n} Tg_u(x)dx\right\rangle dy
    = \left\langle v, \phi \right\rangle, 
\end{align*}
where $v=\int_{\R^n}\int_{\R^n} \overline{h(y)}Tg_u(x)dxdy \in H$, a constant. This establishes our claim.\\

%\vspace{10pt}

 % \textbf{Step 3 :} {\color{blue}{\textbf{Action of $T$ on $\delta_x^k$}}}

%\vspace{5pt}

%First, we define what it means by distribution orthogonal to a vector in $H$.\\
% Let $\Delta \in S^\prime_H$. We say that $\Delta$ is {\it orthogonal to $e_k$} if $\Delta(f)=0$ for all $f\in S_H$ satisfying $\left\langle f, e_\ell \right\rangle = 0$ for all $\ell \neq k$, i.e. $\Delta(ge_k)=0$ for all $g \in S_\C$.\\

%{\color{blue}\textbf{$Supp(T\delta_x^k)$ is singleton:}}
{\bf Step 3:} In this step we show that $Supp(T\delta_x^k)$ is singleton and independent of $k$.\\
First we show that $Supp(T\delta_x^k)$ is singleton. If possible let there are two distinct points $y,z$ belonging to $Supp(T\delta_x^k)$. Let $U_y, U_z$ be two open sets containing $y,z$ respectively such that $U_y \cap U_z = \displaystyle \emptyset$. Choose $g_y, g_z \in S_H$ with $Supp(g_y)\subset U_y, Supp(g_z)\subset U_z$ satisfying $T\delta_x^k(g_y)\neq 0, \,\,T\delta_x^k(g_z)\neq 0$. Further, choose $h_y,\, h_z \in S_\C$ having compact support and $h_y\equiv 1$ on $Supp(g_y)$ and $h_z\equiv 1$ on $Supp(g_z)$. Let $h_y^k(t)=h_y(t)e_k$ and $h_z^k(t)= h_z(t)e_k$. 
%So $h_y^k, h_z^k \in S_H$.\\

Then $(T\delta_x^k\blackinwhitesquare g_y)\otimes h_y^k\neq 0$ and $(T\delta_x^k\blackinwhitesquare g_z)\otimes h_z^k\neq 0$.
In fact, if $\left\langle (T\delta_x^k\blackinwhitesquare g_y)\otimes h_y^k, \phi \right\rangle = 0$ for all $\phi \in S_H$, then, taking $\phi= \widetilde{h_y^k}$, we arrive at a contradiction.\\
Let $f_y,\, f_z,\, H_y^k,\, H_z^k \in S_H$ be such that $Tf_y= g_y,\, Tf_z= g_z,\, TH_y^k= h_y^k,\, TH_z^k= h_z^k$. Then, by injectivity of $T$, we have $(\delta_x^k\blackinwhitesquare f_y)\otimes H_y^k\neq 0$ and $ (\delta_x^k\blackinwhitesquare f_z)\otimes H_z^k\neq 0$, which implies that
%\begin{equation}
    $(\delta_x^k\blackinwhitesquare f_y) \neq 0$
%\end{equation}
and
%\begin{equation}
    $(\delta_x^k\blackinwhitesquare f_z) \neq 0$.
%\end{equation}
But
\begin{align*}
 \delta_x^k \blackinwhitesquare f_y(\phi)= \delta_x^k(\phi\smblksquare  f_y)= \left\langle \phi\smblksquare f_y(x), e_k \right\rangle
 %&= \left\langle \phi(x)f_y(x), e_k \right\rangle\\
 = \phi(x)\left\langle f_y(x), e_k \right\rangle,~~\text{ for }\phi\in S_\C.
 \end{align*}
This shows that $\left\langle f_y(x), e_k \right\rangle\neq 0$. Similarly, we can show that $\left\langle f_z(x),e_k \right\rangle \neq 0$.

\noindent
Since, $\delta_x^k$ is orthogonal to $e_k$, $T\delta_x^k$ is orthogonal to $e_k$. So, without loss of generality, we can take $g_y(x)=G_y(x)e_k$ and $g_z(x)=G_z(x)e_k$ for some $G_y,\, G_z \in S_\C$. Since $T^{-1}$ satisfies same properties as $T$, we can take $f_y(x)=F_y(x)e_k$ and $f_z(x)=F_z(x)e_k$ for some $F_y,\, F_z \in S_\C$. Thus, $F_y(x)\neq 0$ and $F_z(x)\neq 0$. Hence $f_y \times \widetilde{f_z}(x)= \left\langle f_y(x), f_z(x)\right\rangle = F_y(x)\overline{F_z(x)}\neq 0$, which implies $g_y\times \widetilde{g_z} \neq 0$. But from the disjointness of the support of $g_y$ and $g_z$, we have $g_y\times \widetilde{g_z}=0$. These are contradictory, hence $Supp(T\delta_x^k)$ is singleton.\\
\par Now we show that $Supp(T\delta_x^k)$ is independent of $k$. The arguments are similar to that of $Supp(T\delta_x^k)$ is singleton. However, we provide an outline here. Let $Supp(T\delta_x^k)=\{y \}$ and $Supp(T\delta_x^\ell)=\{z\}$. Assume that $y\neq z$. Then there exist open sets $U_y, U_z $, containing $y,z$ respectively, such that $U_y\cap U_z=\displaystyle \emptyset$. Choose $g_y^k, g_z^\ell\in S_H$, where $g_y^k(t)=g_y(t)e_k$ and $g_z^\ell(t)=g_z(t)e_\ell$ for some  $g_y,g_z\in S_\C$, $supp(g_y)\subset U_y$, $supp(g_z)\subset U_z$, such that $T\delta_x^k  \left(g_y^k\right) \neq 0$ and $ T\delta_x^\ell \left(g_z^\ell\right) \neq 0$.  
% \begin{equation}\label{eq: singleton 1}
%     \left\langle T\delta_x^k , g_y^k\right\rangle \neq 0
% \end{equation} and 
% \begin{equation}
%     \left\langle T\delta_x^\ell, g_z^\ell\right\rangle \neq 0
% \end{equation}
 \par Again choose $h_y,h_z \in S_\C$ such that $h_y=1$ on $supp(g_y^k)$ and $h_z=1$ on $supp(g_z^\ell)$. Let $h_y^k(t)=h_y(-t)e_k$ and $h_z^\ell(t)=h_z(-t)e_\ell$. Then $T\delta_x^k \blackinwhitesquare g_y^k)\otimes h_y^k\not\equiv 0$ and $(T\delta_x^\ell\blackinwhitesquare g_z^\ell)\otimes h_z^\ell\not\equiv 0$.
Now taking $f_y^k, f_z^\ell, H_y^k, H_z^k \in S_H$ such that $Tf_y^k= g_y^k, Tf_z^\ell= g_z^\ell, TH_y^k= h_y^k, TH_z^\ell=h_z^\ell$ it follows that  $(\delta_x^k \blackinwhitesquare f_y^k)\otimes H_y^k\not\equiv 0$ and $(\delta_x^\ell \blackinwhitesquare f_z^\ell)\otimes H_z^\ell\not\equiv 0$. This implies that $\delta_x^k\blackinwhitesquare f_y^k\not\equiv 0$ and $\delta_x^\ell \blackinwhitesquare f_z^\ell\not\equiv 0$. This in turn implies that $\left\langle f_y^k(x), e_k \right\rangle \neq 0$. Similarly, we obtain $\left\langle f_z^\ell(x), e_\ell \right\rangle \neq 0$.
Moreover, 
\begin{align*}
    \left\langle f_y^k(x), e_k \right\rangle \left\langle e_\ell, f_z^\ell(x) \right\rangle = \left\langle e_\ell , \left\langle e_k, f_y^k(x)\right\rangle \widetilde{f_z^\ell}(-x) \right\rangle=\left\langle e_\ell, \left(e_k\blackinwhitesquare f_y^k\right)\otimes \widetilde{f_z^\ell}(x)\right\rangle\neq 0.
    %= f_z^\ell\times ((e_k\blackinwhitesquare f_y^k)\otimes e_\ell)(x)\neq 0.
\end{align*}
Choose $\phi\in S_H$ such that $\phi(-x)\neq 0$. Then the above equality implies that $e_\ell\blackinwhitesquare\left(\left(e_k\blackinwhitesquare f_y^k\right)\otimes \widetilde{f_z^\ell}\right)\otimes \phi\neq 0$. Now appealing to the hypothesis (ii) and (iv), we obtain
$e_\ell\blackinwhitesquare\left(\left(e_k\blackinwhitesquare g_y^k\right)\otimes \widetilde{g_z^\ell}\right)\otimes T\phi\neq 0.$ But, using the disjointness of support of $g_y$ and $g_z$, it is easy to see that 
$e_\ell\blackinwhitesquare\left(\left(e_k\blackinwhitesquare g_y^k\right)\otimes \widetilde{g_z^\ell}\right)\otimes T\phi= 0$, which is a contradiction. Hence $y=z$.

\vspace{10pt}

\textbf{Step 4:} In this step we investigate the action of $T$ on $\delta_0^k$.\\
We define the function $u:\R^n\to\R^n$ by $u(x)= Supp(T\delta_x^k)$. In view of the previous step, $u$ is well defined.
Since, $Supp(T\delta_x^k)$ is singleton, proceeding as in the classical case (see Theorem 6.25 in \cite{Rudin_FA}),  it follows that there exist $M>0$ and scalars $\gamma_0(x),\,\gamma_\alpha(x),\,1\leq|\alpha|\leq M$ such that
\begin{equation}\label{eq: T delta}
T\delta_x^k= \gamma_0(x)\delta_{u(x)}^k+\sum_{|\alpha|\leq M} \gamma_\alpha(x)D^\alpha \delta_{u(x)}^k.\end{equation}
For $\Delta \in S_H'$, we define $\langle (\Delta \circledast \delta_0^k)\fisheye \delta_0^k, \phi\rangle = \langle \Delta, (\delta_0^k \circledast \phi)\fisheye \delta_0^k\rangle $ for any $\phi \in S_H$. 
%{\color{red}\st{where $(\delta_0^k \circledast \phi)\fisheye \delta_0^k$ is defined as $\langle (\delta_0^k\circledast \phi )\fisheye \delta_0^k, f\rangle= \langle\delta_0^k, (\delta_0^k\fisheye \widetilde {\phi})\circledast f\rangle$, for all $f\in S_H.$ A simple calculation shows that $(\delta_0^k\circledast \phi)\fisheye \delta_0^k(y)=\langle \phi(y), e_k\rangle e_k$, which justifies the above definition.}} 
The validity of this definition follows from the discussion in Section 2 preceding Proposition \ref{pro: delta derivative}.

Let $\Delta=\sum_{|\alpha|\leq M}c_\alpha D^\alpha\delta_x^k$, for some $c_\alpha\in \C$. Then it is easy to see that, $\langle\Delta, (\delta_0^k\circledast \phi)\fisheye \delta_0^k\rangle= \langle \Delta, \langle \phi(\cdot), e_k\rangle e_k\rangle= \langle \Delta , \phi\rangle  $, which in turn implies that 
\begin{equation}\label{eq: s4 delta 0 1}
(\Delta \circledast \delta_0^k)\fisheye \delta_0^k= \Delta.
\end{equation}
 
 %for any $\Delta \perp e_l$ for all $l\neq k.$

We define the subclass $S_2\subset S_H'$ be the set of all $\Delta \in S_H'$ such that $(\Delta\fisheye f)\circledast g \in S_H$ for all $f,g \in S_H.$ Equivalently we can say that $\Delta \in S_2$ iff $\F \Delta\in S_1.$ It is immediate that  $\delta_x^k\in S_2$, for any $k\in\N,\,x\in\R^n$.

Now we extend the notion of convolutions to the class $S_2$. If $\Delta\in S_H'$, $\Delta'\in S_2$, we define $\langle (\Delta\circledast f)\fisheye \Delta', \phi\rangle= \langle \Delta, (\Delta'\fisheye \widetilde {f})\circledast \phi\rangle$.

\par Let $\Delta=\sum_{|\alpha|\leq M}c_\alpha D^\alpha\delta_x^k$. Then using the definition of convolutions and the identity $\left(\Delta\circledast\delta_0^k\right)\fisheye\delta_0^k= \Delta$, it follows that 
$$\left(\left(\left(\Delta\circledast \delta_0^k\right)\fisheye \delta_0^k\right)\fisheye \psi_1\right)\circledast \psi_2= \left(\Delta\fisheye \left(\left(\delta_0^k\circledast \psi_1\right)\fisheye\delta_0^k\right)\right)\circledast\psi_2.$$
% \begin{propn}
%     Let $\Delta\in S_H'$ be such that $\Delta \perp e_l$ for $l\neq k.$ Then we have the following identity:
%     $$\left(\left(\left(\Delta\circledast \delta_0^k\right)\fisheye \delta_0^k\right)\fisheye \psi_1\right)\circledast \psi_2= \left(\Delta\fisheye \left(\left(\delta_0^k\circledast \psi_1\right)\fisheye\delta_0^k\right)\right)\circledast\psi_2.$$
% \end{propn}
% \begin{proof}
%     It can be easily justified using the orthogonality property of $\Delta$ and the identity $\left(\Delta\circledast\delta_0^k\right)\fisheye\delta_0^k= \Delta.$
% \end{proof}
We now aim to show that 
\begin{equation}\label{eq: s4 delta 0 2}
T(\Delta\circledast \delta_0^k)\fisheye \delta_0^k= (T\Delta\circledast T\delta_0^k)\fisheye T\delta_0^k,
\end{equation}
where $\Delta=\sum_{|\alpha|\leq M}c_\alpha D^\alpha\delta_x^k$, for some $c_\alpha\in\C.$ In order to see this, for $\psi_1,\,\psi_2\in S_H$, consider
\begin{align*}
         &\left(T\left(\left(\Delta\circledast\delta_0^k\right) \fisheye \delta_0^k\right)\fisheye T\psi_1\right)\circledast T\psi_2 \\
         &= T\left(\left(\left(\left(\Delta\circledast\delta_0^k\right)\fisheye \delta_0^k\right)\fisheye \psi_1\right)\circledast T\psi_2 \right)\\
         &= T\left(\left(\Delta\fisheye\left(\left(\delta_0^k \circledast \psi_1\right)\fisheye \delta_0^k\right)\right)\circledast\psi_2\right)\\
         &= \left(T\Delta \fisheye T\left(\left(\delta_0^k\circledast \psi_1\right)\fisheye \delta_0^k\right)\right)\circledast T\psi_2\\
         &= \left(T\Delta \fisheye \left(\left(T\delta_0^k\circledast T\psi_1\right)\fisheye T\delta_0^k\right)\right)\circledast T\psi_2\\
         &= \left(T\Delta \fisheye \left(\left(\displaystyle\sum_{|\alpha|\leq M}\gamma_{\alpha}\left(0\right)D^\alpha\delta_{u\left(0\right)}^k\circledast T\psi_1\right)\fisheye \displaystyle\sum_{|\beta|\leq M}\gamma_{\beta}\left(0\right)D^\beta\delta_{u(0)}^k\right)\right)\circledast T\psi_2\\     
         &= \sum_{|\alpha|\leq M} \sum_{|\beta| \leq M} \gamma_\alpha\left(0\right) \gamma_\beta\left(0\right) \left( T \Delta \fisheye \left( \left( D^\alpha \delta_{u(0)}^k \circledast T \psi_1 \right) \fisheye D^\beta \delta_{u(0)}^k \right)\right) \circledast T \psi_2\\
         &= \sum_{|\alpha|\leq M} \sum_{|\beta| \leq M} \gamma_\alpha\left(0\right) \gamma_\beta\left(0\right) \left( T \Delta \fisheye \left ((-1)^{|\alpha|}\left( \left( \delta_{u(0)}^k \circledast D^{\alpha+\beta} T \psi_1 \right) \fisheye \delta_{u(0)}^k \right)\right)\right) \circledast T \psi_2\\ 
         &= \sum_{|\alpha|\leq M} \sum_{|\beta| \leq M} \gamma_\alpha\left(0\right) \gamma_\beta\left(0\right)(-1)^{|\alpha|} \left( T \Delta \fisheye \left (\left( \left( \delta_{u(0)}^k \circledast D^{\alpha+\beta} T \psi_1 \right) \fisheye \delta_{u(0)}^k \right)\right)\right) \circledast T \psi_2\\
         &= \sum_{|\alpha|\leq M} \sum_{|\beta| \leq M} \gamma_\alpha\left(0\right) \gamma_\beta\left(0\right)(-1)^{|\alpha|} \left(\left(\left(\left( T \Delta \circledast \delta_{u(0)}^k\right)\fisheye \delta_{u(0)}^k \right)\fisheye D^{\alpha+\beta} T \psi_1\right) \circledast T \psi_2\right)\\
         &= \sum_{|\alpha|\leq M} \sum_{|\beta| \leq M} \gamma_\alpha\left(0\right) \gamma_\beta\left(0\right)(-1)^{|\alpha|}(-1)^{\alpha+\beta} \left(\left(D^{|\alpha+\beta|}\left(\left( T \Delta \circledast \delta_{u(0)}^k\right)\fisheye \delta_{u(0)}^k \right)\fisheye T \psi_1\right) \circledast T \psi_2\right)\\
         &= \sum_{|\alpha|\leq M} \sum_{|\beta| \leq M} \gamma_\alpha\left(0\right) \gamma_\beta\left(0\right)(-1)^{|\alpha|}(-1)^{|\alpha+\beta|} \left(\left((-1)^{|\beta|}\left(\left( T \Delta \circledast D^\alpha\delta_{u(0)}^k\right)\fisheye D^\beta\delta_{u(0)}^k \right)\fisheye T \psi_1\right) \circledast T \psi_2\right)\\
         &= \left(\left(\left(T\Delta\circledast \sum_{|\alpha|\leq M} \gamma_\alpha(0)D^\alpha\delta_{u(0)}^k\right)\fisheye \sum_{|\beta| \leq M}\gamma_\beta(0)D^\beta\delta_{u(0)}^k\right)\fisheye T\psi_1\right)\circledast T\psi_2\\
         &= \left(\left(\left(T\Delta\circledast T\delta_0^k\right)\fisheye T\delta_0^k\right)\fisheye T\psi_1\right)\circledast T\psi_2.
     \end{align*}
Now, proceeding as in the case of $T\delta_x^k$, we can show that $T^{-1}\delta_x^k$ also has singleton support. Let $p$ be the support of $T^{-1}\delta_0^k$. Then we can write
$$T^{-1}\delta_0^k=\sum_{|\alpha|\leq M'}c_\alpha D^\alpha\delta_p^k,$$
for some $C_\alpha\in\C$ and $M^\prime\in\N$. Thus
$$\delta_0^k= T(T^{-1}\delta_0^k)= T((T^{-1}\delta_0^k\circledast \delta_0^k)\fisheye \delta_0^k)= (\delta_0^k\circledast \delta_{u(0)}^k)\fisheye \delta_{u(0)}^k =\delta^k_{2u(0)}.$$
Comparing the support on both the sides we obtain $u(0)=0$. Therefore we have, $T\delta_0^k= \sum_{|\alpha|\leq M}\gamma_\alpha(0)D^\alpha\delta_0^k.$

%{\color{red}\st{So now we have,}}
Further, by making use of \eqref{eq: s4 delta 0 1} and \eqref{eq: s4 delta 0 2} we have 
$$
\delta_0^k =T\left(\left(T^{-1}\delta_0^k\circledast\delta_0^k\right)\fisheye \delta_0^k\right)=\left(\delta_0^k\circledast T\delta_0^k\right)\fisheye T\delta_0^k= \sum_{|\alpha|\leq M}\sum_{|\beta|\leq M}\gamma_\alpha(0)\gamma_\beta(0) D^{\alpha+\beta}\delta_0^k.
$$
Taking the Fourier transform on both sides, we obtain
$e_k= \sum_{|\alpha|\leq M}\sum_{|\beta|\leq M}\gamma_\alpha(0)\gamma_\beta(0)(2\pi)^{|\alpha+\beta|}(-i)^{|\alpha+\beta|}x^{\alpha+\beta}e_k$, which immediately implies that $\gamma_{\alpha}(0)= 0$ for all $\alpha\neq 0$. So we have $T\delta_0^k= \gamma_0(0)\delta_0^k.$ Now applying $T$ on both sides of the identity $(\delta_0^k\circledast\delta_0^k)\fisheye\delta_0^k=\delta_0^k$, we obtain $(\gamma_0(0))^3=\gamma_0(0)$. In other words, $\gamma_0(0)=\pm 1$. We proceed with $\gamma(0)=1$ as the same technique works even if we take $\gamma_0(0)=-1$.

\vspace{10pt}

\textbf{Step 5:} In this step we show that $u$ is an additive function. 
% In this step we show that the function $u:\R^n\to\R^n$ defined by $u(x)= Supp(T\delta_x^k)$ is additive. \\
% Since, $supp(T\delta_x^k)$ is singleton, proceeding as in the classical case (see Theorem 6.25 in \cite{Rudin_FA}),  it follows that there exist $M>0$ and scalars $\gamma_0(x),\,\gamma_\alpha(x),\,1\leq|\alpha|\leq M$ such that
% \begin{equation}\label{eq: T delta}
% T\delta_x^k= \gamma_0(x)\delta_{u(x)}^k+\sum_{|\alpha|\leq M} \gamma_\alpha(x)D^\alpha \delta_{u(x)}^k.\end{equation}
Since $T(\delta_x^k)$ has the form given in \eqref{eq: T delta}, appealing to the discussions in Section 2, it follows that $(T\delta_x^k \circledast T\delta_y^k)\fisheye \delta_0^k$ is well defined. we now show that, $T((\delta_x^k \circledast \delta_y^k)\fisheye \delta_0^k)= (T\delta_x^k \circledast T\delta_y^k)\fisheye \delta_0^k$.

In order to see this, for $f_1, f_2 \in S_H$, consider
\begin{align*}
\left(T\left(\left(\delta_y^k\circledast \delta_0^k\right)\fisheye \widetilde{\psi_1}\right)~\widetilde{}~\ast Tf_1\right)\mdsmblkcircle Tf_2 &= T\left(\left(\left(\left(\delta_y^k \circledast \delta_0^k\right)\fisheye \widetilde{\psi_1}\right)\widetilde{}~\,\ast f_1\right)\mdsmblkcircle f_2\right)\\
    &= T\left(\left(\delta_y^k\fisheye \left(\left(\delta_0^k\fisheye \psi_1\right)\circledast f_1\right)\right)\circledast f_2\right) \\
    &= \left(T\delta_y^k\fisheye T\left(\left(\delta_0^k \fisheye \psi_1\right)\circledast f_1\right)\right)\circledast Tf_2 \\
    &=  \left(T\delta_y^k\fisheye \left(\left(T\delta_0^k \fisheye T\psi_1\right)\circledast Tf_1\right)\right)\circledast Tf_2\\
    &= \sum_{|\alpha|\leq M}\gamma_\alpha(y)\left(D^\alpha\delta_{u\left(y\right)}^k\fisheye \left(\left(\delta_{0}^k \fisheye T\psi_1\right)\circledast Tf_1\right)\right)\circledast Tf_2\\
    &=\sum_{|\alpha|\leq M}\gamma_\alpha(y)\left(\left(\left(D^\alpha\delta_{u\left(y\right)}^k \circledast \delta_{0}^k\right)\fisheye \widetilde{T\psi_1}\right)\widetilde{}\,\ast Tf_1\right)\mdsmblkcircle Tf_2 \\
    &= \left(\left(\left(T\delta_y^k \circledast T\delta_0^k\right)\fisheye T\widetilde{\psi_1}\right)\widetilde{}\,\ast Tf_1\right)\mdsmblkcircle Tf_2,
\end{align*}
using \eqref{eq: conv asso 3}.
Moreover, for $\psi_1, \psi_2 \in S_H$, we have 
\begin{align*}
    &\left(T\left(\left(\delta_x^k\circledast \delta_y^k\right)\fisheye \delta_0^k\right)\fisheye T\psi_1\right)\circledast T\psi_2 \\&=
    T\left(\left(\left(\left(\delta_x^k \circledast \delta_y^k\right)\fisheye \delta_0^k\right)\fisheye \psi_1\right)\circledast \psi_2\right)\\
    &= T\left(\left(\delta_x^k \fisheye\left(\left(\delta_y^k \circledast \delta_0^k\right)\fisheye \widetilde{\psi_1}\right)\widetilde{}\,\right)\circledast \psi_2\right) \\
    &= \left(T\left(\delta_x^k\right)\fisheye T\left(\left(\delta_y^k \circledast \delta_0^k\right)\fisheye \widetilde{\psi_1}\right)\right)\widetilde{}\,\circledast T\psi_2  \\
    &= \left(T\left(\delta_x^k\right)\fisheye \left(\left(T\delta_y^k \circledast T\delta_0^k\right)\fisheye \widetilde{T\psi_1}\right)\right)\widetilde{}\,\circledast T\psi_2  \\
    % &{\color{blue}\text{will show that, }T\left(\left(\delta_y^k \circledast \delta_0^k\right)\fisheye \widetilde{\psi_1}\right)\right)=  \left(\left(T\delta_y^k \circledast T\delta_0^k\right)\fisheye T\widetilde{\psi_1}\right)\right)}\\
    &= \sum_{|\alpha|\leq M}\sum_{|\beta|\leq N}\gamma_\alpha(x)\gamma_\beta(y)\left(D^\alpha\delta_{u\left(x\right)}^k\fisheye \left(\left(D^\beta\delta_{u\left(y\right)}^k\circledast \delta_{0}^k\right)\fisheye \widetilde{T\psi_1}\right)\right)\widetilde{}\,\circledast T\psi_2\\
    &= \sum_{|\alpha|\leq M}\sum_{|\beta|\leq N}\gamma_\alpha(x)\gamma_\beta(y)\left(\left(\left(D^\alpha\delta_{u\left(x\right)}^k\circledast D^\beta\delta_{u\left(y\right)}^k\right)\fisheye \delta_{0}^k\right)\fisheye T\psi_1\right)\circledast T\psi_2  \\
    &= \left(\left(\left(T\delta_x^k\circledast T\delta_y^k\right)\fisheye T\delta_0^k\right)\fisheye T\psi_1\right) \circledast T\psi_2,
\end{align*}
using \eqref{eq: conv asso 4}, which establishes our claim.
%This shows that $T((\delta_x^k \circledast \delta_y^k)\fisheye \delta_0^k)= (T\delta_x^k \circledast T\delta_y^k)\fisheye T\delta_0^k$.
Thus,
\begin{align*}
    u(x+y) &= Supp(T\delta_{x+y}^k)\\
    &= Supp(T((\delta_x^k \circledast \delta_y^k)\fisheye \delta_0^k))\\
    &= Supp((T\delta_x^k \circledast T\delta_y^k)\fisheye \delta_0^k)) \\
    &=Supp\left(\left(\sum_{|\alpha|\leq M} \gamma_\alpha(x)D^\alpha \delta_{u(x)}^k\circledast \sum_{|\beta|\leq N} \gamma_\beta(y)D^\beta \delta_{u(y)}^k\right) \fisheye \delta_0^k\right)\\
     &= Supp\left( \sum_{|\alpha|\leq M}\sum_{|\beta|\leq N}\gamma_\alpha(x)\gamma_\beta(y)D^{\alpha+\beta}\delta_{u(x)+u(y)}^k \right)\\
     &= u(x)+u(y),
\end{align*}
using Proposition \ref{pro: delta derivative}. 
%{\color{red}\st{$ \left(D^\alpha \delta_x^k\circledast D^\beta \delta_y^k\right)\fisheye \delta_0^k= D^{\alpha+\beta}\delta_{x+y}^k.$}} 
This shows that $u$ is an additive function.

\vspace{10pt}

\textbf{Step 6: } In this step we show that $T\delta_x^k= \delta_{u(x)}^k$.

In view of the equality \eqref{eq: T delta}, we need to show that $\gamma_0(x)=1$ and $\gamma_\alpha(x)=0$ for all $\alpha \neq 0.$ Let $y= \frac{x}{M+1}$. Then
$$ \delta_x^k= \left[\cdots\left[\left[\left[(\delta_y^k\circledast \delta_y^k)\fisheye \delta_0^k\right]\circledast \delta_y^k\right]\fisheye \delta_0^k\right]\cdots \circledast\delta_y^k \fisheye\right]\delta_0^k, $$
where we take $(M-1)$ times appropriate convolution of $\delta_y^k$ and $\delta_0^k$ with $(\delta_y^k\circledast \delta_y^k)\fisheye \delta_0^k$. This implies that
\begin{equation}\label{eq: txk}
 T\delta_x^k= \left[\cdots\left[\left[\left[(T\delta_y^k\circledast T \delta_y^k)\fisheye \delta_0^k\right]\circledast T\delta_y^k\right]\fisheye \delta_0^k\right]\cdots \circledast T\delta_y^k \fisheye\right]\delta_0^k. 
 \end{equation}
Note that, for some $M^\prime\in \N$, we have 
\begin{equation}\label{eq: t delata y}
T\delta_y^k=\gamma_0(y)\delta_0^k+\sum_{1\leq |\alpha|\leq M^\prime}\gamma_\alpha(y)D^\alpha\delta^k_{u(y)}.
\end{equation}
% Let us define the action of $(D^\alpha \delta_x^k\circledast D^\beta \delta_y^k)\fisheye \delta_0^k$ on a $\phi\in S_H$ as 
% $$\left\langle D^\alpha \delta_x^k\circledast D^\beta \delta_y^k)\fisheye \delta_0^k, \phi \right\rangle = \left\langle D^\alpha \delta_x^k, \left(\delta_0^k \circledast \phi\right)\fisheye D^\beta \delta_y^k \right\rangle.$$
% In order to see the above action more explicitly, let us compute $\left(\delta_0^k \circledast \phi\right)\fisheye D^\beta \delta_y^k$ explicitly. For $f\in S_H$, consider,
% \begin{align*}
%     \left\langle \left(\delta_0^k \circledast \phi\right)\fisheye D^\beta \delta_y^k,\psi\right\rangle =\left\langle \delta_0^k, \left(D^\beta \delta_y^k\fisheye \widetilde{\phi}\right)\circledast \psi\right\rangle.
% \end{align*}
% For example when $M=2$, we can write $\delta_x^k= [[(\delta_y^k\circledast \delta_y^k)\fisheye \delta_0^k]\circledast \delta_y^k]\fisheye \delta_0^k$, Now applying $T$ both side, we obtain
% $$T\delta_x^k= [[(T\delta_y^k \circledast T\delta_y^k)\fisheye \delta_0^k]\circledast \delta_0^k]\fisheye \delta_0^k,$$
% And by additivity of $u$, we have $u(x)=3 u(y)$. 
% {\color{blue}We also have to justify $(D^\alpha \delta_x^k\circledast D^\beta \delta_y^k)\fisheye \delta_0^k= D^{\alpha+\beta}\delta_{x+y}^k$.}

Since $u$ is additive, $u(y)= u(x)/(M+1)$. Now substituting $T\delta_y^k$ (as in \eqref{eq: txk}) in \eqref{eq: t delata y} and 
%{\color{red}\st{using $(D^\alpha \delta_x^k\circledast D^\beta \delta_y^k)\fisheye \delta_0^k= D^{\alpha+\beta}\delta_{x+y}^k$}}
appealing to Proposition \ref{pro: delta derivative}, it is easy to see that $\gamma_\alpha(x)= 0$ for all $\alpha\neq 0$.
%Let $T\delta_y^k= \eta_0(y)\delta_{u(y)}^k+\sum_{|\alpha|\leq M'}\eta_\alpha (y)D^\alpha \delta_{u(y)}^k$. Like the classical case, can show that $\eta_\alpha=0$ for all $\alpha\neq 0$. Therefore we have $T\delta_y^k=\eta_0\delta_{u(y)}^k$. From that we can conclude for all $\alpha\neq 0$, we have $\gamma_\alpha=0$. Now we want to show that $\gamma_0=1$. 
\par We now prove that $\gamma_0(x)=1.$
Since, $\gamma_0(0)=1$, it is enough to show that $\gamma(x)$ is independent of $x$. In order to see this observe that $\int_{\R^n} \left\langle e_k, f(z)\right\rangle e_k \, dz\in S_H'$, for $f\in S_H$. Then we have the following identity.
$$\left( \left(\int_{\R^n}  \langle e_k, f(z)\rangle e_k\, dz\right)\fisheye \psi_1\right)\circledast \psi_2= \left( e_k \fisheye f\right) \circledast \left(\left(\delta_x^k \fisheye \psi_1\right)\circledast \psi_2\right),~~\text{ for }\psi_1,\,\psi_2\in S_H.$$
In fact,
\begin{align*}
    &\left\langle\left( \left(\int_{\R^n}  \langle e_k, f(z)\rangle e_k\, dz\right)\fisheye \psi_1\right)\circledast \psi_2, \phi \right\rangle \\
    %= \left\langle \Lambda_{\int_{\R^n}  \langle e_k, f(z)\rangle e_k\, dz}, \left( \phi \ast \widetilde {\psi_2}\right) \mdsmblkcircle \widetilde {\psi_1}\right\rangle\\
    &=\int_{\R^n}  \left\langle \left( \phi \ast \widetilde{\psi_2}\right) \mdsmblkcircle \widetilde{\psi_1}(y), \int_{\R^n}  \left\langle e_k, f(z)\right\rangle e_k\, dz \right\rangle \, dy\\
    %&= \int_{\R^n}  \int_{\R^n}  \left\langle f(z), e_k\right\rangle \left\langle  \left( \phi \ast \widetilde {\psi_2}\right) \mdsmblkcircle \widetilde {\psi_1}(y), e_k \right\rangle \, dz\, dy\\
    %&= \int_{\R^n}  \int_{\R^n}  \left\langle f(z), e_k \right\rangle \left\langle \int_{\R^n} \int_{\R^n}  \left\langle \phi(v), \widetilde {\psi_2}(u-v)\right\rangle \widetilde {\psi_1}(y-u)\, du \, dv, e_k\right\rangle \, dz\, dy\\
    &= \int_{\R^n}  \int_{\R^n}  \int_{\R^n}  \int_{\R^n}  \left\langle f(z), e_k \right\rangle \left\langle \phi(v), \psi_2(v-u)\right\rangle \left\langle \psi_1(u-y), e_k\right\rangle dudvdzdy\\
    &= \int_{\R^n}  \int_{\R^n}  \int_{\R^n}  \int_{\R^n}  \left\langle f(z), e_k \right\rangle \left\langle \phi(v), \psi_2(v-u)\right\rangle \left\langle \psi_1(y), e_k\right\rangle \,du\,dv\,dz\,dy
\end{align*}

On the other hand,
\begin{align*}
    &\left\langle \left( e_k \fisheye f\right) \circledast \left(\left(\delta_x^k \fisheye \psi_1\right)\circledast \psi_2\right), \phi \right\rangle \\&=  \left\langle e_k, \left( \phi \ast \left(\left(\delta_x^k \fisheye \psi_1\right)\circledast \psi_2\right)\widetilde{}~\,\right)\mdsmblkcircle \widetilde {f}\right\rangle\\
    &= \int_{\R^n}  \left\langle \left( \phi \ast \left(\left(\delta_x^k \fisheye \psi_1\right)\circledast \psi_2\right)\widetilde{}~\,\right)\mdsmblkcircle \widetilde {f}(y), e_k \right\rangle\, dy\\
    %&= \int_{\R^n}  \left\langle \int_{\R^n}  \int_{\R^n}  \left\langle \phi(v),\left(\left(\delta_x^k \fisheye \psi_1\right)\circledast \psi_2\right)\widetilde{}~\,\left(u-v\right)\right\rangle \widetilde {f}(y-u)\, du\, dv, e_k\right\rangle \, dy\\
    &= \int_{\R^n}  \int_{\R^n}  \int_{\R^n}  \left\langle \phi(v), \left(\left(\delta_x^k \fisheye \psi_1\right)\circledast \psi_2\right)(v-u)\right\rangle\left\langle f(u-y), e_k\right\rangle \, du\, dv\, dy\\
    &= \int_{\R^n}  \int_{\R^n}  \int_{\R^n}  \left\langle \phi(v), \int_{\R^n}  \left\langle e_k, \psi_1(z-x)\right\rangle \psi_2(v-u-z)\, dz\right\rangle \left\langle f(u-y), e_k\right\rangle\, du\, dv\, dy\\
    %&=\int_{\R^n}  \int_{\R^n}  \int_{\R^n}  \int_{\R^n}  \left\langle \psi_1(z-x), e_k\right\rangle \left\langle \phi(v), \psi_2(v-u-z)\right\rangle\left\langle f(u-y), e_k \right\rangle \, du\, dv\, dy\, dz\\
    &= \int_{\R^n} \int_{\R^n} \int_{\R^n} \int_{\R^n}  \left\langle \psi_1(z), e_k \right\rangle \left\langle \phi(v), \psi_2(v-u)\right\rangle \left\langle f(y), e_k\right\rangle \, du\, dv\, dy\, dz,
\end{align*}
applying change of variables. Hence
\begin{align*}
    \left(T\left(\left(\int_{\R^n}  \left\langle e_k, f(z)\right\rangle e_k \, dz\right)\right) \fisheye T\psi_1\right)\circledast T\psi_2 &= T\left(\left(\left(\int_{\R^n}  \left\langle e_k, f(z)\right\rangle e_k \, dz\right) \fisheye \psi_1\right)\circledast \psi_2\right)\\
    &= T\left( \left( e_k \fisheye f\right)\circledast \left(\left(\delta_x^k \fisheye \psi_1\right) \circledast \psi_2\right)\right)\\
    %&= \left( Te_k \fisheye Tf\right)\circledast \left(\left(T\delta_x^k \fisheye T\psi_1\right) \circledast T\psi_2\right)\\
    &= \left( e_k \fisheye Tf\right)\circledast \left(\left(\gamma(x)T\delta_{u(x)}^k \fisheye T\psi_1\right) \circledast T\psi_2\right)\\
     &= \gamma(x)\left( e_k \fisheye Tf\right)\circledast \left(\left(T\delta_{u(x)}^k \fisheye T\psi_1\right) \circledast T\psi_2\right)\\
     &= \gamma(x)\left(\left( \int_{\R^n}  \left\langle e_k, Tf(z)\right\rangle e_k\, dz\right)\fisheye T\psi_1\right)\circledast T\psi_2,
\end{align*}
which shows that $\gamma(x)$ is independent of $x$. Therefore, $T(\delta_x^k)=\delta_{u(x)}^k.$\\ 
%Hence, $\gamma(x)=\gamma(0)=1.$ 
%Using $T\delta_x^k=\delta_{u(x)}^k$, it follows 

\textbf{Step 7: } In this step we show that there exists $B\in GL_n(\R)$ such that $u(x)=Bx$.

In order to see this observe that $u(0)=0$. Furthermore, $u$ is invertible. In fact, if $u(x)=u(y)$, then $\delta_{u(x)}^k= \delta_{u(y)}^k$, which is same as $T\delta_x^k= T\delta_y^k$. Now, using the injectivity of $T$, we get $x=y.$

Since the hypotheses in the theorem are symmetric with respect to $T$ and $T^{-1}$, the same is true for $T^{-1}$. This, in particular, implies that $u$ is onto.

\par We establish our claim by showing that $u$ is linear. Since, $u$ is additive and invertible it is enough to show that $u$ is locally bounded.

%Then there exist a $B \in GL_n(\R)$ such that $u(x)=Bx$, $x\in\R^n.$

Towards this end, first we show that  
\begin{equation}\label{eq: u lin 1.1}
\left\langle Tf(y), Tg(y)\right\rangle = \left\langle T(f(x)), T(g(x))\right\rangle,~~\text{ for }f,g\in S_H,
\end{equation}
where $y=u(x).$ In view of hypothesis (iv), it is enough to show that
\begin{equation}\label{eq: u lin 1}
\left\langle Tf(y), Tg(-y)\right\rangle = \left\langle T(f(x)), T(g(-x))\right\rangle,
\end{equation}
where $y=u(x)$. Recall that the distribution $\left\langle f(x), g(-x)\right\rangle \delta_x^k$ is defined as
$$\langle \left\langle f(x), g(-x)\right\rangle \delta_x^k, \phi\rangle = \langle f(x), g(-x)\rangle\langle \delta_x^k, \phi \rangle.$$ 

For $\psi_1,\,\psi_2,\,\phi\in S_H$, consider
\begin{align*}
    \left\langle (\left\langle f(x), g(-x)\right\rangle \delta_x^k \blackinwhitesquare \psi_1)\otimes \psi_2, \phi \right\rangle
    %\left\langle ((\delta_x^k \blackinwhitesquare (f\times g))\blackinwhitesquare \psi_1)\otimes \psi_2, \phi \right\rangle &= \left\langle (\delta_x^k \blackinwhitesquare (f\times g)), (\phi \times \psi_2)\smblksquare \psi_1 \right\rangle\\
    &= \left\langle f(x), g(-x)\right\rangle\left\langle \delta_x^k , (\phi \times \psi_2)\smblksquare \psi_1 \right\rangle\\
    %&= \left\langle f(x), g(-x)\right\rangle \left\langle (\phi \times \psi_2)\smblksquare \psi_1 (x), e_k\right\rangle\\
    &= \left\langle f(x), g(-x) \right\rangle \left\langle \phi(x), \psi_2(-x)\right\rangle \left\langle \psi_1(x), e_k \right\rangle\\
    &= \left\langle (\phi \times \psi_2)\smblksquare ((f\times g)\smblksquare \psi_1)(x), e_k \right\rangle\\
    &= \left\langle \delta_x^k, (\phi \times \psi_2)\smblksquare \left((f\times g)\smblksquare \psi_1\right)\right\rangle\\
    &=\left\langle \left(\delta_x^k \blackinwhitesquare \left(\left(f\times g\right)\smblksquare \psi_1\right)\right)\otimes \psi_2, \phi \right\rangle.
    \end{align*}

Hence $(\left\langle f(x), g(-x)\right\rangle \delta_x^k \blackinwhitesquare \psi_1)\otimes \psi_2 = \left(\delta_x^k \blackinwhitesquare \left(\left(f\times g\right)\smblksquare \psi_1\right)\right)\otimes \psi_2$.  Thus,
\begin{align*}
    \left(T\left(\langle f\left(x\right), g\left(-x\right)\right\rangle)\delta_x^k)\blackinwhitesquare T\psi_1\right)\otimes T\psi_2&= T\left(\left(\left(\langle f\left(x\right), g\left(-x\right)\right\rangle)\delta_x^k)\blackinwhitesquare \psi_1\right)\otimes \psi_2\right) \\
    %T\left( \left( \delta_x^k\blackinwhitesquare \left( f\times g\right) \right) \blackinwhitesquare T\psi_1\right) \otimes T\psi_2 &= T\left( \left( \left( \delta_x^k \blackinwhitesquare \left( f\times g\right) \right) \blackinwhitesquare \psi_1\right) \otimes \psi_2\right) \\
    &= T\left( \left( \delta_x^k \blackinwhitesquare \left( \left( f\times g\right) \smblksquare \psi_1\right) \right) \otimes \psi_2\right) \\
    &= \left( T\delta_x^k \blackinwhitesquare T\left( \left( f\times g\right) \smblksquare \psi_1\right) \right) \otimes T\psi_2\\
    &= \left( \delta_{u\left( x\right) }^k \blackinwhitesquare \left( \left( Tf \times Tg\right) \smblksquare T\psi_1\right) \right) \otimes T\psi_2\\
    %&= \left( \left( \delta_{u\left( x\right) }^k \blackinwhitesquare \left( Tf \times Tg\right) \right) \blackinwhitesquare T\psi_1\right) \otimes T\psi_2 \\
    %&= \left( \left( T\delta_x^k \blackinwhitesquare \left( Tf \times Tg\right) \right) \blackinwhitesquare T\psi_1\right) \otimes T\psi_2
    &= \left(\left\langle Tf\left(u\left(x\right)\right), Tg\left(-u\left(x\right)\right)\right\rangle \delta_{u\left(x\right)}^k\blackinwhitesquare T\psi_1\right)\otimes T\psi_2.
\end{align*}
This shows that

\begin{equation}\label{eq: u lin2}
T\left(\langle f\left(x\right), g\left(-x\right)\right\rangle)\delta_x^k)= \left\langle Tf\left(y\right), Tg\left(-y\right)\right\rangle \delta_{y}^k,
\end{equation}
where $u(x)=y.$
%$$T(\delta_x^k\blackinwhitesquare(f\times g))= T\delta_x^k \blackinwhitesquare (Tf\times Tg).$$

% In order to show that $u$ is locally bounded, let $A\subset\R^n$ be bounded. Let $B\subset \R^n$ be an open set containing $A$. Choose $f\in S_\C$ such that $f=1$ on $A$ and $f=0$ on $B^c$. Let $f_k(x)=f(x)e_k$. Then
% $$
% \|Tf_k(u(x))\|^2=\langle Tf_k(u(x)), Tf_k(u(x))\rangle=\langle T(f_k(x)), T(f_k(x))\rangle=1,
% $$
% for $x\in A$. Since, $Tf_k$ is a Schwartz function, $u(A)$ is bounded. In other words, $u$ is locally bounded. 
 
% % {\color{teal}
% % \textbf{Final Step:} {\color{blue} \textbf{Defining $\beta$ and Properties of $\beta$.}}

% % {\bf Final step:}
% % For $c,d\in H$ let us define the distribution $(c\blackinwhitesquare d)\otimes e_1$ as $(c\blackinwhitesquare d)\otimes e_1 = \left\langle c, d \right\rangle e_1$
% % \vspace{5pt}

% So, from the previous step, we have, 
% $${\color{blue} \left\langle Tf(y), Tg(-y) \right\rangle = \left\langle T(f(x)), T(g(-x))\right\rangle= \beta(\left\langle f(x), g(-x\right\rangle)) }$$

Note that if we can show that 
\begin{equation}\label{eq: u lin 2.1}
    T\left(\langle f\left(x\right), g\left(-x\right)\right\rangle)\delta_x^k)=\left\langle T(f(x)),T(g(-x))\right\rangle T\delta_x^k,
\end{equation}
then the equality \eqref{eq: u lin 1} follows from \eqref{eq: u lin2}.
% \begin{equation}

% \end{equation}
%  Then \eqref{eq: u lin 1} follows from \eqref{eq: u lin2}.
 
In order to show \eqref{eq: u lin 2.1}, we first establish the following identities.  
\begin{align}
\label{eq: u lin 3} T(\langle \phi, \alpha e_k\rangle d)&= \langle T\phi,  T(\alpha e_k)\rangle Td,\\    
\label{eq: u lin 3.1} T(\left\langle c, d \right\rangle \alpha e_k)&= \left\langle Tc, Td \right\rangle T(\alpha e_k),\\
\label{eq: u lin 3.2} T(\left\langle ce_k, e_k\right\rangle\delta_x^k)&= \left\langle T(ce_k), e_k \right\rangle T\delta_x^k,
\end{align}
where $\phi\in S_H,\,c, d\in H,\, \alpha\in\C$ and $\langle \phi, e_k\rangle d(x)= \langle \phi(x), e_k\rangle d$.
We prove the above identities below.
\par For $\psi_1, \psi_2\in S_H$, consider 
\begin{align*}
    (\langle \phi, \alpha e_k\rangle d\times \psi_1)\smblksquare \psi_2(x) =\left\langle \left\langle \phi, \alpha e_k\right\rangle d(x), \psi_1(-x)\right\rangle \psi_2(x)
    = \overline{\alpha} \langle \phi(x), e_k\rangle \langle d, \psi_1(-x)\rangle \psi_2(x).
\end{align*}
Further, for $f\in S_H$
\begin{align*}
    \langle (d\blackinwhitesquare ((\alpha e_k\blackinwhitesquare \phi)\otimes \psi_1))\otimes \widetilde {\psi_2}, f\rangle &= \langle d, (f\times \widetilde {\psi_2})\smblksquare ((\alpha e_k\blackinwhitesquare \phi)\otimes \psi_1)\rangle\\
    &= \int_{\R^n}  \langle (f\times \widetilde {\psi_2})\smblksquare ((\alpha e_k\blackinwhitesquare \phi)\otimes \psi_1)(x), d\rangle \, dx\\
    %&= \int_{\R^n}  \left\langle \left\langle f(x), \widetilde {\psi_2}(x)\right\rangle \left\langle e_1, \phi(x)\right\rangle \psi_1(-x), d\right\rangle\, dx\\
    &=  \int_{\R^n}  \left\langle f(x), \overline{\alpha}\left\langle \phi(x), e_k\right\rangle \left\langle d, \psi_1(-x)\right\rangle \psi_2(x)\right\rangle\, dx,
\end{align*} 
using $(\alpha e_k\blackinwhitesquare \phi)\otimes \psi_1(x)=\alpha \langle e_k, \phi(x)\rangle \psi_1(-x).$ 
This shows that
$$\langle \phi, \alpha e_k\rangle \times \psi_1)\smblksquare \psi_2= (d\blackinwhitesquare ((\alpha e_k\blackinwhitesquare \phi)\otimes \psi_1))\otimes \widetilde {\psi_2}.$$
Hence
\begin{align*}
    (T(\langle \phi, \alpha e_k \rangle d)\times T\psi_1)\smblksquare T\psi_2 &= T((\langle \phi, \alpha e_k \rangle d\times \psi_1)\smblksquare \psi_2)\\
    &= T((d\blackinwhitesquare ((\alpha e_k \blackinwhitesquare \phi)\otimes \psi_1))\otimes \widetilde {\psi_2})\\
    &= (Td\blackinwhitesquare ((T(\alpha e_k) \blackinwhitesquare T\phi)\otimes T\psi_1))\otimes T\widetilde {\psi_2}\\
    &= (Td\blackinwhitesquare ((T(\alpha e_k) \blackinwhitesquare T\phi)\otimes T\psi_1))\otimes \widetilde{T\psi_2}\\
    &= (\langle T\phi, T(\alpha e_k) \rangle Td\times T\psi_1)\smblksquare \widetilde{\widetilde{T\psi_2}}\\
    &= (\langle T\phi, T(\alpha e_k) \rangle Td\times T\psi_1)\smblksquare T\psi_2,
\end{align*}
which proves \eqref{eq: u lin 3}.
%Thus $T(\langle \phi, e_k \rangle d)= \langle T\phi, Te_k \rangle Td.$

Now recall that for $c,d\in H$, the distribution $\langle c,d\rangle \alpha e_k $ is defined by $$\left\langle \alpha \left\langle c,d \right\rangle e_k , \phi \right\rangle= \int_{\R^n}\left\langle \phi(x), \alpha \left\langle c,d\right\rangle \, e_k  \right\rangle dx,$$
where $\phi\in S_H$.
For $\psi_1,\psi_2, \phi \in S_H,$ we have
$$
\left\langle (\left\langle c, d\right\rangle \alpha e_k \blackinwhitesquare \psi_1)\otimes \psi_2, \phi \right\rangle=\left\langle \alpha \langle c,d\rangle e_k,(\phi\times \psi_2)\smblksquare \psi_1\right\rangle=\overline{\alpha \left\langle c,d\right\rangle} \int_{\R^n} \left\langle  \left\langle \phi(x), \psi_2(-x)\right\rangle \psi_1(x), e_k \right\rangle\, dx.
$$
% \begin{align*}
%     \left\langle (\left\langle c, d\right\rangle e_k \blackinwhitesquare \psi_1)\otimes \psi_2, \phi \right\rangle &= \left\langle \left\langle c,d \right\rangle e_k , (\phi\times \psi_2)\smblksquare \psi_1\right\rangle\\
%     &= \left\langle e_k , \left\langle c,d \right\rangle (\phi\times \psi_2)\smblksquare \psi_1\right\rangle\\
%     &= \int \left\langle \left\langle c,d \right\rangle (\phi\times \psi_2)\smblksquare \psi_1(x), e_k \right\rangle\, dx\\
%     &=\int \left\langle \left\langle c,d\right\rangle \left\langle \phi(x), \psi_2(-x)\right\rangle \psi_1(x), e_k \right\rangle\, dx\\
%     &= \int \left\langle \phi(x), \left\langle d,c\right\rangle\left\langle e_k , \psi_1(x)\right\rangle \psi_2(-x)\right\rangle \, dx
% \end{align*}
% Hence $(\left\langle c, d\right\rangle e_k \blackinwhitesquare \psi_1)\otimes \psi_2(x)= \left\langle d,c\right\rangle\left\langle e_k , \psi_1(x)\right\rangle \psi_2(-x)$ pointwise. 
On the other hand,
$$
\left\langle \left(c \blackinwhitesquare \left(\left \langle \psi_1, \alpha e_k \right \rangle d\right)\right)\otimes \psi_2, \phi \right\rangle = \left\langle c, (\phi \times \psi_2)\smblksquare (\langle \psi_1, \alpha e_k \rangle d)\right\rangle=\overline{\alpha \left\langle c,d\right\rangle} \int_{\R^n} \langle \phi(x), \psi_2(-x)\rangle\langle \psi_1(x), e_k \rangle \, dx,
$$
% \begin{align*}
%     \left\langle \left(d\blackinwhitesquare \left(\left \langle \psi_1, e_k \right \rangle c\right)\right)\otimes \psi_2, \phi \right\rangle &= \left\langle d, (\phi \times \psi_2)\smblksquare (\langle \psi_1, e_k \rangle c)\right\rangle\\
%     &= \int \left\langle(\phi \times \psi_2)\smblksquare (\langle \psi_1, e_k \rangle c)(x), d\right \rangle\, dx\\
%     &= \int \langle \phi(x), \psi_2(-x)\rangle\langle \psi_1(x), e_k \rangle \langle c,d\rangle\, dx\\
%     &= \int \left\langle \phi(x), \left\langle d,c \right\rangle \left\langle e_k , \psi_1(x)\right\rangle \psi_2(-x)\right\rangle \, dx\\
% \end{align*}
% So, pointwise we can write $\left(d\blackinwhitesquare \left(\left \langle \psi_1, e_k \right \rangle c\right)\right)\otimes \psi_2(x)=\left\langle d,c \right\rangle \left\langle e_k , \psi_1(x)\right\rangle \psi_2(-x)$

which shows that $(\left\langle c, d\right\rangle e_k \blackinwhitesquare \psi_1)\otimes \psi_2= \left(c \blackinwhitesquare \left(\left \langle \psi_1, e_k \right \rangle d\right)\right)\otimes \psi_2. $
Therefore,
\begin{align*}
    (T(\langle c,d\rangle e_k )\blackinwhitesquare T\psi_1)\otimes T\psi_2 &= T((\langle c,d\rangle e_k \blackinwhitesquare \psi_1)\otimes \psi_2)\\
    &= T((c \blackinwhitesquare (\langle \psi_1, e_k \rangle d))\otimes \psi_2)\\
    &= (Tc \blackinwhitesquare T(\langle \psi_1, e_k \rangle d))\otimes T\psi_2\\
    &= (Tc\blackinwhitesquare (\langle T\psi_1, Te_k \rangle Td))\otimes T\psi_2\\
    &= (Tc\blackinwhitesquare (\langle T\psi_1, Te_k \rangle Td))\otimes T\psi_2\\
    &= (\langle Tc,Td\rangle e_k \blackinwhitesquare T\psi_1)\otimes T\psi_2\\
    &= (\langle Tc,Td\rangle Te_k \blackinwhitesquare T\psi_1)\otimes T\psi_2,
\end{align*}
which establishes \eqref{eq: u lin 3.1}.
%This shows that $T(\langle c,d\rangle e_k )= \langle Tc,Td\rangle Te_k $.

Now for any $c\in\C$, $\langle ce_k,e_k\rangle\delta_x^k$ defines a distribution.

% So,
% \begin{align*}
%      \left\langle \delta_x^k \otimes(ce_k \blackinwhitesquare e_k, \phi \right\rangle & := \left\langle \delta_x^k, (ce_k \otimes \widetilde{\phi}) \blackinwhitesquare  e_k \right\rangle\\
%      &=  \left\langle (ce_k \otimes \widetilde{\phi})\blackinwhitesquare  e_k (x), e_k \right\rangle \\
%      &=  \left\langle \left\langle \phi(x), e_k \right\rangle ce_k , e_k \right\rangle \\
%      &= c\left\langle \phi(x), e_k\right\rangle\\
%      &= \left\langle \left\langle ce_k, e_k \right\rangle \delta_x^k, \phi \right\rangle
% \end{align*}
For $\psi_1,\,\psi_2,\,\phi \in S_H$, consider
\begin{align*}
    \left\langle \left( \left( \langle ce_k,e_k\rangle\delta_x^k\right)\blackinwhitesquare \psi_1\right)\otimes \psi_2, \phi\right\rangle 
    %&= \left\langle (c\delta_x^k \blackinwhitesquare \psi_1)\otimes \psi_2, \phi \right\rangle \\
    = c\left\langle \delta_x^k, (\phi \times \psi_2)\smblksquare \psi_1 \right\rangle
    %&= \left\langle \delta_x^k, c(\phi \times \psi_2)\smblksquare \psi_1 \right\rangle\\
    %&= \left\langle c(\phi \times \psi_2)\smblksquare \psi_1 (x), e_k \right\rangle \\
    %&= c \left\langle \left\langle \phi(x), \psi_2(-x) \right\rangle \psi_1(x), e_k \right\rangle\\
    = c \left\langle \phi(x), \psi_2(-x) \right\rangle \left\langle \psi_1(x), e_k \right\rangle.
\end{align*}
Further, it is easy to see that
\begin{align*}
    \left\langle  \left( \delta_x^k \blackinwhitesquare \left( \left\langle \psi_1, e_k \right\rangle ce_k \right) \right) \otimes \psi_2, \phi\right\rangle = \left\langle \delta_x^k, \left( \phi \times \psi_2\right) \smblksquare \left( \left\langle \psi_1, e_k \right\rangle ce_k \right) \right\rangle 
    %&= \left\langle \left( \phi \times \psi_2\right) \smblksquare \left( \left\langle \psi_1, e_k \right\rangle ce_k \right) \left( x\right) , e_k \right\rangle\\
    %&= \left\langle \left\langle \phi\left( x\right) , \psi_2\left( -x\right) \right\rangle \left\langle \psi_1\left( x\right) , e_k \right\rangle ce_k, e_k \right\rangle \\
    %&= \left\langle \phi\left( x\right) , \psi_2\left( -x\right)  \right\rangle \left\langle \psi_1\left( x\right) , e_k \right\rangle c \left\langle e_k, e_k \right\rangle\\
    = c \left\langle \phi\left( x\right) , \psi_2\left( -x\right) \right\rangle \left\langle \psi_1\left( x\right) , e_k \right\rangle. 
\end{align*}

So we have $\left( \left( \langle ce_k,e_k\rangle\delta_x^k\right)\blackinwhitesquare \psi_1\right)\otimes \psi_2=\left( \delta_x^k \blackinwhitesquare \left( \left( \psi_1\blackinwhitesquare e_k \right) \otimes ce_k \right) \right) \otimes \psi_2 .$

Hence,
\begin{align*}
    \left(T\left(\left( \langle ce_k,e_k\rangle\delta_x^k\right)\right)\blackinwhitesquare T\psi_1\right)\otimes T\psi_2 &= T\left(\left(\left( \langle ce_k,e_k\rangle\delta_x^k\right)\blackinwhitesquare \psi_1\right)\otimes \psi_2\right)\\
    &= T\left(\left(\delta_x^k\blackinwhitesquare \left( \left\langle \psi_1, e_k \right\rangle ce_k \right)\right)\otimes \psi_2\right)\\
    &= \left(T\delta_x^k \blackinwhitesquare T\left( \left\langle \psi_1, e_k \right\rangle ce_k \right)\right)\otimes T\psi_2\\
    %&=\textcolor{red}{  \left(T\delta_x^k \blackinwhitesquare \left(\left( T\psi_1\blackinwhitesquare Te_k \right) \otimes T\left(ce_k\right)\right)\right)\otimes T\psi_2}\\
    &= \left(T\delta_x^k \blackinwhitesquare \left( \left\langle T\psi_1, e_k \right\rangle T(ce_k) \right)\right)\otimes T\psi_2\\
    %&= \left(\left(\langle de_k,e_k\rangle\delta_y^k\right)\blackinwhitesquare T\psi_1\right) \otimes T\psi_2\\
    &= \left(\left(\langle T\left(ce_k\right),e_k\rangle T\delta_x^k\right)\blackinwhitesquare T\psi_1\right) \otimes T\psi_2,
\end{align*}
which shows \eqref{eq: u lin 3.2}.
%This shows that, $T(\left\langle ce_k, e_k\right\rangle\delta_x^k)= \left\langle T(ce_k), e_k \right\rangle T\delta_x^k$.\\
% \textbf{Associativity Justification:}
% We, want to show that, $$\left( \left( \delta_x^k \otimes \left( ce_k \blackinwhitesquare e_k \right)\right)\blackinwhitesquare \psi_1\right)\otimes \psi_2 = (\delta_x^k \blackinwhitesquare (ce_k \otimes(e_k \blackinwhitesquare \psi_1)))\otimes \psi_2$$

% So we have 

Now using \eqref{eq: u lin 3}, \eqref{eq: u lin 3.1} and \eqref{eq: u lin 3.2}, we have
\begin{align*}
    \left\langle Tf\left(y\right), Tg\left(-y\right)\right\rangle \delta_y^k %&= T\delta_x^k \blackinwhitesquare \left(Tf \times Tg\right)\\
    %&= T\left(\delta_x^k \blackinwhitesquare \left(f\times g\right)\right)\\
    &= T\left(\left\langle f\left(x\right), g\left(-x\right) \right\rangle \delta_x^k\right)\\
    &= T\left(\left\langle \left\langle f\left(x\right), g\left(-x\right)\right\rangle e_k, e_k \right\rangle \delta_x^k\right)\\
    &= \left\langle T\left(\left\langle f\left(x\right), g\left(-x\right)\right\rangle e_k\right), Te_k \right\rangle T\delta_x^k \\
    &= \textcolor{black}{\left\langle \left\langle T\left(f\left(x\right)\right), T\left(g\left(-x\right)\right)\right\rangle e_k, e_k \right\rangle \delta_y^k} \\
    &= \left\langle T\left(f\left(x\right)\right), T\left(g\left(-x\right)\right)\right\rangle \delta_y^k,
\end{align*}
which proves \eqref{eq: u lin 2.1}.
\par In order to show that $u$ is locally bounded, let $W\subset\R^n$ be bounded. Let $V\subset \R^n$ be an open set containing $W$. Choose $f\in S_\C$ such that $f=1$ on $W$ and $f=0$ on $V^c$. Let $f_k(x)=f(x)e_k$. Then
$$
\|Tf_k(u(x))\|^2=\langle Tf_k(u(x)), Tf_k(u(x))\rangle=\langle T(f_k(x)), T(f_k(x))\rangle=1,
$$
using \eqref{eq: u lin 1.1}, for $x\in W$. Since, $Tf_k$ is a Schwartz function, $u(W)$ is bounded. In other words, $u$ is locally bounded. 
Hence $u$ is linear and thus, there exists $B\in GL_n(\R)$ such that $u(x)=Bx$.

\vspace{10pt}

\item \textbf{Step 8: } In this step we show that the map $\beta: \C \to \C$ defined by $$ \beta(c)= \left\langle T(ce_1), e_1\right\rangle$$
can be written as 
$$
\beta(c)= \left(\frac{c}{|c|}\right)^k|c|^\alpha, \text{ for some } Re(\alpha)>0, k\in \{\pm 1\}.
$$
In order to obtain the above form it is enough to show that $\beta$ is a continuous multiplicative bijection on $\C$.
Observe that for $c,d \in H$,
\begin{align}\label{eq: beta cd Tcd}
    \beta(\left\langle c,d \right\rangle )= \left\langle T(\left\langle c,d \right\rangle e_1), e_1 \right\rangle= \left\langle \left\langle Tc, Td \right\rangle e_1, e_1 \right\rangle = \left\langle Tc, Td \right\rangle,
\end{align}
using \eqref{eq: u lin 3.1}.

%\textbf{Properties of $\beta$ : }\\
% \begin{itemize}
%     \item [(i)] $\beta$ is bijective
%     \item [(ii)] $\beta$ is additive
%     \item [(iii)] $\beta$ is multiplicative
%     %\item [(iv)] $\beta$ is continuous 
% \end{itemize}
% \begin{proof}
%We now show that the function $\beta$ is an additive and multiplicative bijection which maps $\R$ onto $\R$. These properties of $\beta$ will conclude that either $\beta(c)=c$ or $\beta(c)=\overline{c}$. This in turn will lead to the desired conclusion in view of \eqref{eq: u lin 1} and making use of the hypothesis (iv).

%\par So, we are now left with proving the specified properties of $\beta$. The additivity of $\beta$ follows from that of the operator $T$. 
%\begin{align*}
%    \beta(c)\beta(d)&= \left\langle T(ce_1),e_1 \right\rangle \left\langle T(de_1), e_1\right\rangle = {\color{teal}\left\langle \left\langle T(ce_1),e_1 \right\rangle T(de_1), e_1 \right\rangle= \left\langle T(\left\langle ce_1, e_1\right\rangle de_1), e_1 \right\rangle} = \left\langle T(cde_1), e_1 \right\rangle\\
%    &= \beta(cd)
%\end{align*}
%
%====
%{\color{black}
In order to show that $\beta$ is multiplicative, observe that $T(\bar{c}e_1)= \overline{c'}e_1$, where $T(ce_1)=c'e_1$. In fact,
\begin{align*}
    T(\bar{c}e_1) &= T(\langle e_1, ce_1\rangle e_1)= \langle Te_1, T(ce_1)\rangle Te_1= \langle e_1, c'e_1\rangle e_1= \overline{c'}e_1,
    %=\overline{c'e_1}=\overline{T(ce_1)}.
\end{align*}
using \eqref{eq: u lin 3.1}. Thus, 
\begin{align*}
\beta(cd)=\langle T(cde_1),e_1\rangle=\left\langle T\left(\langle ce_1, \bar{d}e_1\rangle e_1\right), e_1\right\rangle=\langle T(ce_1),T(\bar{d}e_1)\rangle= \langle c'e_1, \overline{d'}e_1\rangle&= \left\langle c'e_1, e_1\right\rangle \left\langle d'e_1, e_1\right\rangle\\
&=\beta(c)\beta(d),
\end{align*} 
using \eqref{eq: u lin 3.1}, where $T(de_1)=d^\prime e_1$.
%\begin{align*}
%    \beta(cd) &= T(cde_1)\\
%    &= T(\langle ce_1, \overline{d}e_1\rangle e_1)\\
%    &= \langle T(ce_1), T(\overline{d}e_1)\rangle Te_1\\
%    &= \langle c'e_1, \overline{T(de_1)}\rangle e_1\\
%    &=\langle c'e_1, \overline{d'}e_1\rangle\\
%    &=c'd'e_1
%\end{align*}
%
%\begin{align*}
%    \beta(c)\beta(d) &= \left\langle T(ce_1), e_1\right\rangle\left\langle T(de_1), e_1\right\rangle\\
%    &= \left\langle \left\langle T(ce_1), e_1\right\rangle T(de_1), e_1\right\rangle\\
%    &=\left\langle \left\langle c'e_1, e_1\right\rangle d'e_1, e_1\right\rangle\\
%    &= c'd'e_1
%\end{align*}
%Hence, we have proved that $\beta$ is multiplicative.

%
%====
Let $c,d \in \C$ be such that $\beta(c)=\beta(d)$. Then $\left\langle T(ce_1), e_1 \right\rangle = \left\langle T(de_1), e_1 \right\rangle$. Since $T(ce_1)$ and $T(de_1)$ have components along $e_1$ only, $T(ce_1)=T(de_1)$. Hence from the injectivity of $T$, we get the injectivity of $\beta$. The ontoness of $\beta$ follows from that of $T$.
\par 
We now show that $\beta$ is continuous. As in \cite{Alesker_paper}, we obtain the required continuity by making use of the fact that $T$ leaves $S_H$ invariant. Since $\C^*= S^1\times \R_{>0}$ and $\beta(r e^{i\theta})= \beta(r)\beta(e^{i\theta})$, it suffices to prove that $\beta|_{S^1}$ and $\beta|_{\R_{>0}}$ are continuous, where $S^1$ denotes the unit sphere in $\R^n$.

For $a\in [-2\pi, 2\pi]$ construct $f,g\in S_{\C}$ such  that for $x=(a,0,\cdots ,0), f(x)= e^{ia/2}, g(x)= e^{-ia/2}$ and both $f,g$ have compact support. Let $f_k(x)= f(x)e_k, g_k(x)=g(x)e_k$. Then for $y= B(a,0,\cdots ,0)$, appealing to \eqref{eq: u lin 1.1} and \eqref{eq: beta cd Tcd} we have $\langle Tf_k(y), Tg_k(y)\rangle= \beta\left(\left\langle f_k(B^{-1}y), g_k(B^{-1}y)\right\rangle\right)= \beta(e^{ia}).$ Now, since $Tf_k$ and $Tg_k$ are continuous, $\beta|_{S^1}$ must be continuous.

Now fix $r_1>r_0>0$, construct $f,g \in S_{\C}$ such that for $x=(a,0,\cdots ,0)$ with $a\in [r_0,r_1]$ we have $f(x)=\sqrt{a},\, g(x)=\sqrt{a}$ and $f,g$ both have compact support. Let $f_k(x)= f(x)e_k, g_k(x)=g(x)e_k$. Then for $y=B(a,0,\cdots ,0)$, where  $a\in [r_0,r_1]$, we have $\langle Tf_k(y), Tg_k(y)\rangle =  \beta(a).$ Since $Tf_k,\, Tg_k$ are continuous, we have $\beta|_{[r_0,r_1]}$ is continuous. As $r_0, r_1$ are arbitrary we have $\beta|_{\R_{>0}}$ is continuous. Using sequential criteria we can easily check that $\beta$ is continuous at $0$.

\vspace{10pt}

{\bf Step 9:} Finally in this step we obtain the desired equality. Note that, in the previous step, in view of \eqref{eq: u lin 1.1} and \eqref{eq: beta cd Tcd} we have shown that \begin{equation}\label{eq: beta T final}
\langle Tf(y), Tg(y)\rangle= \left( \frac{\langle f(B^{-1}y), g(B^{-1}y)\rangle}{|\langle f(B^{-1}y), g(B^{-1}y)\rangle|}\right)^k\left|\langle f(B^{-1}y), g(B^{-1}y)\rangle\right|^\alpha,
\end{equation}
where $Re(\alpha)>0$ and $k=\pm 1$. So, it is enough to show that $\alpha=1$. 
\par In order to see this take $\epsilon>0$ be arbitrary. Then choose $\delta>0$ be such that the volume of $B([-1/2-\delta, 1/2+\delta]^n-[-1/2+\delta, 1/2-\delta]^n)< \epsilon$. We now consider a function $f\in S_{\C}$ such that 
$$f(x)=\begin{cases}
    1 & x\in [-1/2+\delta, 1/2-\delta]^n,\\
    0 & x\notin [-1/2-\delta, 1/2+\delta]^n,
\end{cases} 
$$
$0\leq f(x)\leq 1$ and $\int_{\R^n}f(x)\, dx=1.$ Set $f_1(x)=f(x)e_1$. 
% The for $\phi \in S_H$, 
% \begin{align*}
%     \langle (e_1\fisheye f_1)\circledast f_1, \phi\rangle &= \langle e_1, (\phi \circledast f_1)\fisheye f_1\rangle\\
%     &= \int_{\R^n} \langle (\phi \ast f_1)\mdsmblkcircle f_1(x), e_k \rangle\, dx\\
%     &= \int_{\R^n}\int_{\R^n} \int_{\R^n} \left\langle \left\langle \phi(z), f_1(y-z)\right\rangle f_1(x-y), e_1\right\rangle\, dz\, dy\, dx\\
%     &= \int_{\R^n}\int_{\R^n} \int_{\R^n} \left\langle \phi(z), f_1(y-z)\right\rangle f(x-y)\, dz\, dy \, dx \\
%     &= \int_{\R^n}\int_{\R^n} \int_{\R^n} \left\langle \phi(z), f(x-y) f_1(y-z)\right\rangle\, dz\, dy\, dx.
% \end{align*}
% So, pointwise we can write, $(e_1\fisheye f_1)\circledast f_1(z)= \int_{\R^n}\int_{\R^n}f(x-y)f_1(y-z)\, dy\, dx= \left(\int f(y)\, dy \int f(x)\, dx\right)e_1= e_1.$ 
Then it is easy to see that $(e_1\fisheye f_1)\circledast f_1=e_1.$ Now let $Tf_1(x)= g(x)e_1$, for some $g\in S_{\C}$ (this is possible due to the hypothesis (iii)). Then a straightforward computation shows that
$(e_1\fisheye Tf_1)\circledast Tf_1(z)= |\int_{\R^n} g(x)\, dx|^2 e_1.$ This immediately implies that $|\int_{\R^n} g(x)\, dx|= 1$, hence $\|\int_{\R^n} Tf_1(x)\, dx\|=1.$

On the other hand, $\langle Tf_1(y), Tf_1(y)\rangle=  |\langle f_1(B^{-1}y), f_1(B^{-1}y)\rangle|^\alpha $. Thus $|g(y)|= \left(f(B^{-1}y)\right)^\alpha.$ Moreover, the choice of $f_1$ gives 
\begin{align*}
\|Tf_1(y)\|= 
    \begin{cases}
      1 & y\in  B([-1/2+\delta, 1/2-\delta]^n)  \\
      0 & y\notin B([-1/2-\delta, 1/2+\delta]^n),
    \end{cases}
\end{align*}
%$||Tf_1(y)||= 1$ on $B([-1/2+\delta, 1/2-\delta]^n)$, $0$ outside $B([-1/2-\delta, 1/2+\delta]^n)$ and 
and $0\leq \|Tf_1(y)\|\leq 1.$ Then one can show that $\left|\int_{\R^n}\|Tf_1(y)\|\, dy-|\det(B)|\right|\leq \epsilon |\det(B)|$. Thus,
$$ 
 \int_{\R^n}\|Tf_1(y)\|\, dy= \int_{\R^n}|g(y)|\, dy=\det(B).
$$
Further,
$$
\int_{\R^n}f(y)^\alpha dy=\frac{1}{\det(B)}\int_{\R^n}f(B^{-1}y)^\alpha dy=\frac{1}{|\det(B)}\int_{\R^n}|g(y)dy=1=\int_{\R^n}f(y)dy.
$$
Since, $f(y)-f(y)^\alpha$ has same sign on $\R^n$, it follows that $\alpha=1$.
This completes the proof of Theorem \ref{th: main th r 1}.
\end{proof}

\begin{remark}
In step 7, using the hypothesis $T(S_H)=S_H$ and the bijectivity of $T$, we have shown that $\beta:\C\to\C$ is bijective and continuous. These properties gave the particular form of $\beta$. Here we would like to mention that if additionally we assume that $T$ is additive, then one can show that $\beta$ is an additive and multiplicative bijection on $\C$, which leaves $\R$ invariant. As a consequence of these properties we would easily get $\beta(z)=z$ or $\beta(z)=\bar z.$
\end{remark}

\begin{remark}
    In step 8 we have defined the function $\beta$ using $e_1$. However, it turns out that $\beta(c)=\langle T(ce_k),e_k\rangle$, for any $k\in\N$. In fact,
    $$
    \beta(c)=\langle T(ce_1),e_1\rangle= \left\langle\langle T(ce_1), Te_1\rangle Te_k,e_k\right\rangle=\left\langle T\left(\langle ce_1,e_1\rangle e_k\right), e_k\right\rangle=\langle T(ce_k),e_k\rangle,
    $$
    using $Te_k=e_k$ and \eqref{eq: u lin 3.1}.
\end{remark}

We now obtain the characterization of the Fourier transform on $S_H$ as a consequence of Theorem \ref{th: main th r 1}.
\begin{proof}[Proof of Theorem \ref{th: main th r}]
    Let $ T=U\circ\F$. Then, it is easy to see that $T$ satisfies the hypotheses of Theorem \ref{th: main th r 1}. Thus appealing to Theorem \ref{th: main th r 1}, we obtain the desired characterization of the Fourier transform.
\end{proof}

\begin{corollary}\label{cor: main th cor}
    If in addition to the hypothesis of Theorem \ref{th: main th r}, $U$ satisfies $U(g_k^\varepsilon)(\xi)=g_k(\varepsilon\xi)$, for all $k\in \N$ and $\varepsilon>0$, where $g_k(x)=e^{-\pi |x|^2}e_k$ , and $g_k^\varepsilon(x)=\varepsilon^{-n}g_k(x/\varepsilon)$ then,
    \begin{itemize}
        \item [(i)] either, $Uf(\omega)= \F f(B\omega)$ for all $f\in S_H$ and $\omega\in \R^n,$ 
        \item [(ii)] or, $Uf(\omega)= \displaystyle\sum_k\overline{\langle \F f(B\omega), e_k \rangle}e_k$ for all $f\in S_H$ and $\omega\in \R^n$,
        \end{itemize}
    and $\det(B)=\pm 1$.
\end{corollary}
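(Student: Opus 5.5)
We would deduce the corollary from the proof of Theorem \ref{th: main th r 1} applied to $T=U\circ\F$, not only from its statement. That proof gives $T\delta_x^k=\gamma\,\delta_{Bx}^k$ with $\gamma=\pm1$, $Te_k=\alpha_k e_k$ with $|\alpha_k|=1$, and $\langle T(ce_k),e_k\rangle=\beta(c)$ with $\beta(c)=c$ or $\beta(c)=\bar c$ uniformly in $k$ (the exponent $\alpha=1$ and $k=\pm1$ from Steps 8--9). These fix $Tf$ up to phases. The plan is therefore to upgrade the identity $\langle Tf(\omega),Tg(\omega)\rangle=\langle f(B\omega),g(B\omega)\rangle$ (or its conjugate) to a pointwise identity for $Tf$ itself, and then to use the Gaussian normalization to kill all remaining phase and scaling ambiguity.

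\textbf{Step 1: coordinatewise form of $T$.} By (iii) applied to $T$ and $T^{-1}$, $T$ sends $\phi e_k$ ($\phi\in S_\C$) to $\psi e_k$. We expand an arbitrary $f\in S_H$ as $f=\sum_k\langle f,e_k\rangle e_k$. Using the identity \eqref{eq: u lin 3} with $d=e_k$, namely $T(\langle\phi,e_k\rangle e_k)=\langle T\phi,Te_k\rangle Te_k$, we get $\langle Tf,e_k\rangle e_k=\bar\alpha_k^{-1}T(\langle f,e_k\rangle e_k)$ up to the phase $|\alpha_k|^2=1$. So $Tf$ is determined by the scalar maps $\phi\mapsto T(\phi e_k)$. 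For these, the pairing identity with $g=e_k$-valued test functions gives $|T(\phi e_k)(\omega)|=|\phi(B\omega)|$. Then \eqref{eq: u lin2} applied to $\delta_x^k$ gives $T(\phi e_k)(Bx)=\lambda_k\,\beta(\phi(x))e_k$ for a constant phase $\lambda_k$, with $\beta(z)=z$ or $\bar z$ and $\lambda_k$ independent of $\phi$ (test against a fixed $\phi$ with $\phi(x)=1$). Hence $Tf(\omega)=\sum_k\lambda_k\beta(\langle f(B\omega),e_k\rangle)e_k$.

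\textbf{Step 2: reinterpret for $U$.} Since $T=U\circ\F$, we have $U=T\circ\F^{-1}$. This gives $Uf(\omega)=\sum_k\lambda_k\beta(\langle \F^{-1}f(B\omega),e_k\rangle)e_k$. Because $\F^{-1}f(y)=\F f(-y)$, hypothesis (iv) lets us absorb the reflection into $B$ (replace $B$ by $-B$).

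\textbf{Step 3: Gaussian normalization.} We have $\F(g_k^\varepsilon)(\xi)=g_k(\varepsilon\xi)$. The hypothesis $U(g_k^\varepsilon)(\xi)=g_k(\varepsilon\xi)$ therefore reads
\[
\lambda_k e^{-\pi\varepsilon^2|B\xi|^2}=e^{-\pi\varepsilon^2|\xi|^2}\quad\text{for all }\xi,\ \varepsilon>0,
\]
where $\beta$ acts trivially because the Gaussian is real. Setting $\xi=0$ gives $\lambda_k=1$. Comparing exponents gives $|B\xi|=|\xi|$, so $B$ is orthogonal and $\det B=\pm1$. Substituting $\lambda_k=1$ and the two choices of $\beta$ yields exactly alternatives (i) and (ii).

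The main obstacle is Step 1. We must show that the phase relating $T(\phi e_k)(Bx)$ and $\beta(\phi(x))$ is a constant $\lambda_k$, independent of $x$ and $\phi$, and that the coordinate expansion commutes with $T$ without assuming linearity. We would first use \eqref{eq: u lin2} with $g=\widetilde{\phi e_k}$ together with Step 6 ($T\delta_x^k=\delta_{Bx}^k$ with constant $\gamma$) to pin down the pointwise value. We would then use the convolution identity of Step 2 of the main proof, that $T$ maps constants to constants, to show the phase does not depend on $x$.
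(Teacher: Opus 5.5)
\textbf{Gap.} The gap is exactly where you locate the main obstacle, and the plan you sketch would not close it. Pairing against a fixed $T(\phi_0e_k)$ with $\phi_0(x)=1$ only gives $T(\phi e_k)(Bx)=\lambda_k(x)\,\beta(\phi(x))\,e_k$, where $\lambda_k(x)$ is unimodular and may depend on $x$. Your two proposed tools do not remove this dependence:
\begin{itemize}
\item Combining \eqref{eq: u lin2} (with $g=\widetilde{\phi e_k}$) with Step 6 returns only the modulus $\|T(\phi e_k)(Bx)\|^2=|\phi(x)|^2$, not the value.
\item The identity from Step 2 of the main proof constrains only integrals such as $\int Tf_k$ and $\int Tg_u$, not pointwise phases.
\end{itemize}
So the constancy of $\lambda_k$ is unproved. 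Your Step 3 depends on it, since it sets $\xi=0$ to conclude $\lambda_k=1$.

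\textbf{Fix, and the paper's route.} You do not need constancy, because the Gaussian hypothesis fixes the phase at every point. With an $x$-dependent phase your Step 3 reads $\lambda_k(\xi)e^{-\pi\varepsilon^2|B\xi|^2}=e^{-\pi\varepsilon^2|\xi|^2}$ for all $\varepsilon>0$. Letting $\varepsilon\to0$ gives $\lambda_k(\xi)=1$, and then $|B\xi|=|\xi|$.

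Once you see this, the detour through the proof of Theorem \ref{th: main th r 1} is unnecessary. The Gaussian hypothesis is only needed to supply a function with known image to pair against, and that is exactly the paper's proof. Substitute $g=g_k^\varepsilon$ directly into the conclusion of Theorem \ref{th: main th r}. Since $Ug_k^\varepsilon(\omega)=e^{-\pi\varepsilon^2|\omega|^2}e_k$ and $\F g_k^\varepsilon(B\omega)=e^{-\pi\varepsilon^2|B\omega|^2}e_k$, the first alternative gives
\[
e^{-\pi\varepsilon^2|\omega|^2}\langle Uf(\omega),e_k\rangle=e^{-\pi\varepsilon^2|B\omega|^2}\langle \F f(B\omega),e_k\rangle .
\]
The second alternative gives the same with $\langle \F f(B\omega),e_k\rangle$ replaced by its conjugate. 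Letting $\varepsilon\to0$ gives (i) or (ii) coordinatewise. No $\beta$, no phases $\lambda_k$, no normalisations $\alpha_k=\gamma=1$, and no \eqref{eq: u lin 3} are needed.

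For the determinant, taking $f=g=g_1^1$ gives $\|g_1(\xi)\|=\|g_1(B\xi)\|$. The paper integrates this to get $|\det B|=1$. Your observation that it forces $B$ to be orthogonal is equally valid and slightly stronger.
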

\begin{proof}
We apply Theorem \ref{th: main th r} with $g=g_k^{\varepsilon}$ and let $\varepsilon$ tend to zero, to arrive at (i), (ii).  In order to see that $|\det(B)|=1,$ observe that $\F(g_1^1)=g_1^1$. Thus appealing to Theorem \ref{th: main th r}, we obtain
$$ \left\|g_1^1(\xi)\right\|=\left\|Ug_1^1(\xi)\right\|=\left\|\F(g_1^1)(B\xi)\right\|=\left\|g_1^1(B\xi)\right\|.$$
Now, integrating both sides, it follows that $|\det(B)|=1.$

\end{proof}

\begin{corollary}
Let $H$ be a separable Hilbert space with an orthonormal basis $\{e_k:k\in\N\}$. Suppose that $U: S_H \to S_H$ is a bijective mapping with bijective extension (which we denote by $U$ again) on $S_H^\prime$. Assume that $U$ satisfies the following:
    \begin{itemize}
        \item [(i)] If $f$ is orthogonal to $e_k$, then $Uf$ is orthogonal to $e_k$,  for all $f\in S_H^\prime$ and $U(e_k)\in H$,
     \item[(ii)] $U\widetilde{f}(x)=\widetilde{Uf}(x),$ where $\widetilde{f}(x)=f(-x)$ for all $f \in S_H,$ 
        %\item [(i)] If $f$ is orthogonal to $e_k$, then $Tf$ is orthogonal to $e_k$, for all $f\in S_H^\prime$.
       % \item[(iii)] $U\left(f \right)=\sum_{k\in\N} U\left(\langle f(\cdot),e_k\rangle e_k\right)$, for all $f\in S_H$,
        \item[(iii)] For every $f \in S_H'$, $\langle U^2f, \phi \rangle= \langle f, \widetilde {\phi}\rangle$ for all $\phi \in S_H$.
    \end{itemize} And that one of the following two identities holds:
    \begin{itemize}
        \item [(1)]  $U((f\fisheye g)\circledast h)=(Uf \blackinwhitesquare Ug) \otimes Uh)$ $\forall f \in S_H'$ and $\forall g,h \in S_H,$
        \item  [(2)] $U((f\blackinwhitesquare g)\otimes h)=(Uf\fisheye Ug)\circledast Uh)$ $\forall f \in S_H'$ and $\forall g,h \in S_H.$
    \end{itemize}
    Then there exists $B \in M_n(\R)$ such that 
    $$\left\langle Uf(\omega), Ug(\omega)\right\rangle = \left\langle \F f(B\omega), \F g(B\omega) \right\rangle\,\,\text{ or }\,\, \left\langle Uf(\omega), Ug(\omega)\right\rangle = \left\langle \F g(B\omega), \F f(B\omega) \right\rangle, \,\,\, f,g \in S_H, \omega\in \R^n.$$
\end{corollary}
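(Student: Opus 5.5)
The idea is to reduce the corollary to Theorem \ref{th: main th r}: we show that hypothesis (iii), the condition $U^2 = $ reflection, turns either one of the identities (1), (2) into the other. Then $U$ satisfies all four hypotheses of Theorem \ref{th: main th r} and the conclusion follows directly.

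First, read (iii) as saying that $U^2 f=\widetilde f$ on $S_H'$, in the distributional sense $\langle \widetilde f,\phi\rangle=\langle f,\widetilde\phi\rangle$. Hence $U^4=\mathrm{Id}$ and $U^{-1}=U^3 = U\circ R$, where $Rf=\widetilde f$. By (ii), $R$ commutes with $U$ on $S_H$. I would extend this to $S_H'$ by applying (iii) once more: $UR = U\,U^2 = U^2 U = RU$, so the extension causes no difficulty.

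Next, check directly from the definitions \eqref{eq: intro conv def} and \eqref{eq: intro pro def} that reflection is compatible with both triple operations:
\[
R\big((f\fisheye g)\circledast h\big)=(Rf\fisheye Rg)\circledast Rh,\qquad R\big((f\blackinwhitesquare g)\otimes h\big)=(Rf\blackinwhitesquare Rg)\otimes Rh .
\]
This is a change of variables $x\mapsto -x$ in the defining pairings. The one point needing care is that $\times$ already contains a reflection, $(f\times g)(x)=\langle f(x),g(-x)\rangle$, and one has $\widetilde f\times\widetilde g=\widetilde{f\times g}$.

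Now suppose (1) holds. Let $F=(f\blackinwhitesquare g)\otimes h$ with $f\in S_H'$ and $g,h\in S_H$. Put $f'=U^{-1}f$, $g'=U^{-1}g$, $h'=U^{-1}h$; here $g',h'\in S_H$ because $U$ is a bijection of $S_H$. Applying (1) to $(f',g',h')$ gives $U((f'\fisheye g')\circledast h')=F$. Applying $U$ and using $U^2=R$, we get $UF=R\big((f'\fisheye g')\circledast h'\big)=(Rf'\fisheye Rg')\circledast Rh'$. Finally, $Rf'=RU^{-1}f=RU^3f=U^5f=Uf$, and similarly $Rg'=Ug$ and $Rh'=Uh$. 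This gives (2). The implication (2)$\Rightarrow$(1) is proved by the same argument with the roles of the operations exchanged. Hypotheses (iii) and (iv) of Theorem \ref{th: main th r} are exactly (i) and (ii) here, so Theorem \ref{th: main th r} yields the stated conclusion.

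The main obstacle is the bookkeeping. One must verify that reflection is an automorphism of both triple operations on the mixed classes $S_H'\times S_H\times S_H$, including the $\times$ product. One must also make sure that $R$ commutes with $U$ on all of $S_H'$ and not only on $S_H$. If the first check reveals extra reflections in some slot, I would compensate using $\widetilde g$ and $\widetilde h$ in place of $g$ and $h$ before applying (1); this is legitimate because (ii) lets $U$ pass through these reflections.
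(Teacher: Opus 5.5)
Your proposal is correct and is the same argument as the paper's: the paper asserts in one line that under (iii) either of (1), (2) implies the other, and then applies Theorem \ref{th: main th r}. You supply the details the paper omits, namely that the reflection $R$ commutes with both triple operations directly from \eqref{eq: intro conv def} and \eqref{eq: intro pro def} (no stray reflections appear, so your fallback is not needed), and that $RU^{-1}=U^{5}=U$ closes the computation.
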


\begin{proof}
    If for every $f \in S_H'$, $\langle U^2f, \phi \rangle= \langle f, \widetilde {\phi}\rangle$, then if one of the last two identities holds, so does the second one, so Theorem $1.7$ gives the result. 
\end{proof}

\vspace{10pt}

\section{Characterization of Fourier transform of $H$-valued functions on $\Z_n$}
In this section we aim to characterize the Fourier transform on $L^1(\Z/n\Z, H)$. In order to do so, let us fix some notation first. 
%\subsection{Definitions}
For $v\in H$, we denote the element $(v,\cdots,v)$ in $L^1(\Z_n, H)$ by $\mathbb{1}_v$. For notational convenience, we write $\mathbb{1}_k$ for $\mathbb{1}_{e_k}$. For $j\in\Z_n,\,k\in\N$, $\delta_j^k$ will denote the element $\delta_je_k$.
\par Recall that the Fourier transform of $a\in L^1(\Z_n,H)$ is given by
\[
\F(a)(\ell)=\sum_{j=0}^{n-1} e^{-\frac{2\pi ij\ell}{n}}a(j),\,\, \ell\in\Z_n.
\]
A straightforward computation shows that $\F^2(a)(\ell)= na(-\ell)$. For $a,b\in L^1(\Z_n,H)$, we define the convolution of $a,b$, analogous to the continuous case, as follows.
\[
a\ast b(\ell)=\displaystyle\sum_{j=0}^{n-1}\langle a(j), b(\ell-j) \rangle,\,\,\ell\in\Z_n
\]
Then it is easy to see that $\F(a\ast b)= \F a \times \F b,$ where $(a\times b)(\ell):=\langle a(\ell),b(-\ell)\rangle$. Moreover, for $a\in L^1(\Z_n),\, b\in L^1(\Z_n,H)$, if we define 
\[
(a\mdsmblkcircle b)(\ell):=\displaystyle\sum_{j=0}^{n-1} a(j) b(\ell-j) ,~~\ell\in\Z_n,
\]
then it follows that $\F(a\mdsmblkcircle b)= \F(a)\smblksquare \F(b)$, where $(a\smblksquare b)(\ell)=a(\ell)b(\ell)$. Combining these we arrive at the following
\begin{theorem}\label{th: dis conv to pro}
    If $a,b,c \in L^1(\Z_n,H)$, then $(a\ast b)\mdsmblkcircle c \in L^1(\Z_n, H)$ and $\F ((a\ast b)\mdsmblkcircle c)= (\F a\times \F b)\smblksquare \F c.$
\end{theorem}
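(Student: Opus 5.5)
The plan is a direct computation that combines the two one-step convolution theorems just recorded, namely $\F(a\ast b)=\F a\times\F b$ and $\F(a\mdsmblkcircle b)=\F(a)\smblksquare\F(b)$. Before composing them I will check each one by expanding the finite sums, since the whole statement is finite-dimensional in the group variable and no convergence questions arise.

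First, membership. For $a,b\in L^1(\Z_n,H)$, the function $a\ast b(\ell)=\sum_j\langle a(j),b(\ell-j)\rangle$ is a scalar function on $\Z_n$, so $a\ast b\in L^1(\Z_n)$. Then $(a\ast b)\mdsmblkcircle c(\ell)=\sum_j (a\ast b)(j)\,c(\ell-j)$ is a finite linear combination of vectors in $H$. Since every function on the finite group $\Z_n$ is integrable for counting measure, it lies in $L^1(\Z_n,H)$.

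Second, verify $\F(a\ast b)=\F a\times\F b$. I expand
\[
\F(a\ast b)(\ell)=\sum_{m}\sum_{j}e^{-2\pi i m\ell/n}\langle a(j),b(m-j)\rangle
\]
and write $e^{-2\pi i m\ell/n}=e^{-2\pi i j\ell/n}e^{-2\pi i (m-j)\ell/n}$. The first factor goes into the first slot of the inner product. The second must be pulled into the conjugate-linear slot, so it becomes $\overline{e^{-2\pi i (m-j)\ell/n}}=e^{-2\pi i (m-j)(-\ell)/n}$. Reindexing $m-j\mapsto p$ over $\Z_n$ then gives $\langle \F a(\ell),\F b(-\ell)\rangle=(\F a\times\F b)(\ell)$. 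This conjugation is exactly why the product $\times$ involves the reflection $-\ell$, and it is the only step needing care.

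Third, verify $\F(\alpha\mdsmblkcircle c)=\F\alpha\smblksquare\F c$ for scalar $\alpha$. This is the classical finite convolution identity: the same splitting of the exponential and reindexing apply, now with no inner product and hence no conjugation.

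Finally, apply the third step with $\alpha=a\ast b$ and then substitute the second step. This yields $\F((a\ast b)\mdsmblkcircle c)=\F(a\ast b)\smblksquare\F c=(\F a\times\F b)\smblksquare\F c$, as claimed. I expect no real obstacle here; the only point to watch is the sign and conjugation bookkeeping in the second step.
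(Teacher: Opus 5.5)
Your proposal is correct and follows the paper's route. The paper obtains the theorem simply by combining $\F(a\ast b)=\F a\times\F b$ with $\F(\alpha\mdsmblkcircle c)=\F\alpha\smblksquare\F c$, which it states without proof. You verify both identities explicitly, and your handling of the conjugate-linear second slot, which produces the reflection $-\ell$, is correct.
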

Further, by making use of $\F^2(a)(\ell)= n a(-\ell)$ together with Theorem \ref{th: dis conv to pro}, we obtain the following.
\begin{theorem}
    Let $a,b,c \in L^1(\Z_n,H)$. Then $(a\times b)\smblksquare c \in L^1(\Z_n, H)$ and $\F ((a\times b)\smblksquare c)= \frac{1}{n^2}(\F a\ast \F b)\mdsmblkcircle \F c.$
\end{theorem}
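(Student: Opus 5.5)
The natural route is to deduce this statement from Theorem \ref{th: dis conv to pro} together with the inversion-type identity $\F^2(a)(\ell)=na(-\ell)$, exactly as the continuous analogue \eqref{eq: intro conv th 2} was obtained from \eqref{eq: conv th intro}. Membership $(a\times b)\smblksquare c\in L^1(\Z_n,H)$ is trivial, since $\Z_n$ is finite and every $H$-valued function on it is integrable; so the content is the identity. I will write $\widetilde{a}(\ell)=a(-\ell)$. Then $\F^2 a=n\widetilde a$, and since $\F$ is a linear bijection on the finite-dimensional-in-$\Z_n$ space $L^1(\Z_n,H)$, we get $\F^{-1}=\frac1n\,\widetilde{\F(\cdot)}$. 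I will also note $\F\widetilde a=\widetilde{\F a}$.

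Set $A=\F a$, $B=\F b$, $C=\F c$, and apply Theorem \ref{th: dis conv to pro} to the triple $(A,B,C)$:
\[
\F\big((A\ast B)\mdsmblkcircle C\big)=(\F A\times \F B)\smblksquare \F C=n^3\,(\widetilde a\times\widetilde b)\smblksquare \widetilde c .
\]
Directly from the definitions, $(\widetilde a\times \widetilde b)(\ell)=\langle a(-\ell),b(\ell)\rangle$ and $(a\times b)(-\ell)=\langle a(-\ell),b(\ell)\rangle$. Hence $(\widetilde a\times\widetilde b)\smblksquare\widetilde c=\widetilde{(a\times b)\smblksquare c}$, and therefore
\[
\F\big((A\ast B)\mdsmblkcircle C\big)=n^3\,\widetilde{\big((a\times b)\smblksquare c\big)} .
\]

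Applying $\F$ once more and using $\F^2 g=n\widetilde g$ gives
\[
n\,\widetilde{(A\ast B)\mdsmblkcircle C}=n^3\,\F\big(\widetilde{(a\times b)\smblksquare c}\big)=n^3\,\widetilde{\F\big((a\times b)\smblksquare c\big)}.
\]
Removing the reflection on both sides yields $\F((a\times b)\smblksquare c)=\frac1{n^2}(\F a\ast\F b)\mdsmblkcircle \F c$, which is the claim.

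The only real obstacle is bookkeeping the reflections, because the product $\times$ is not symmetric under $\ell\mapsto-\ell$. The key check is the identity $(\widetilde a\times \widetilde b)=\widetilde{(a\times b)}$. Alternatively, one could verify the formula by direct expansion of $\sum_j\langle A(j),B(\ell'-j)\rangle$ using orthogonality of characters, $\sum_{\ell}e^{2\pi i \ell m/n}=n\delta_{m,0}$. I would use that expansion as a cross-check on the constant $n^{-2}$.
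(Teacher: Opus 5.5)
Your argument is correct and follows essentially the paper's route: the paper obtains this statement by combining Theorem~\ref{th: dis conv to pro} with $\F^2(a)(\ell)=na(-\ell)$. Your bookkeeping also checks out. The factor $n^3$ arises because $\langle nu,nv\rangle=n^2\langle u,v\rangle$ for real $n$, and together with the reflection identity $\widetilde{a}\times\widetilde{b}=\widetilde{a\times b}$ this yields exactly the constant $\frac{1}{n^2}$.
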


% \begin{proof}
% Consider,
%         \begin{align*}
%         \F((\F a \times \F b)\smblksquare \F c)&= \F(\F ((a\ast b)\mdsmblkcircle c))\\
%         &= \F^2((a\ast b)\mdsmblkcircle c)\\
%         &= \widetilde{((a\ast b)\mdsmblkcircle c)}\\
%         &= ((\widetilde {a}\ast \widetilde {b})\mdsmblkcircle \widetilde {c})\\
%         &= ((\F^2 a\ast \F^2 b)\mdsmblkcircle \F^2 c)\\
%         &= ((\F (\F a)\ast \F (\F b))\mdsmblkcircle \F (\F c)),
%     \end{align*}
% using $\F^2(a)= \widetilde {a}$, where $\widetilde {a}(k)=a(-k).$ Now using the bijectivity of $\F$, we get the desired result.
% \end{proof}

We now show that above convolution properties characterize the Fourier transform on $L^1(\Z_n,H)$.
\begin{theorem}\label{th: main discrete}
    Let $U: L^1(\Z_n, H) \to L^1(\Z_n,H)$ be a bijective transformation (not necessarily linear) such that the map $c\mapsto T(\mathbb{1}_{ce_k})$ is continuous, for any $c\in\C$ and  $T(\mathbb{1}_k)$ is a constant. Assume that
    for all $a,b,c \in L^1(\Z_n,H)$ 
    \begin{itemize}
        \item [(i)] $U((a\times b)\smblksquare c)= \frac{1}{n^2}(U(a)\ast U(b))\mdsmblkcircle U(c).$
        \smallskip
        
        \item [(ii)] $U((a\ast b)\mdsmblkcircle c)=(U(a)\times U(b))\smblksquare U(c).$
        \smallskip
        
 		%\item[(iii)] $U(a)=\sum_{k\in\N}U(\langle a,e_k\rangle e_k)$, where $a$ has the pointwise orthonormal expansion $a_j=\sum_{k\in\N}\langle a_j,e_k\rangle e_k$.
        \smallskip
        
        \item[(iii)] If $a\perp e_k$ for some $k\in\N$, then $U(a)\perp e_k$. 
        % \item[(iv)] The map $c\mapsto T(\mathbb{1}_{ce_k})$ is continuous, for any $c\in\C$ and  $T(\mathbb{1}_k)$ is a constant.
    \end{itemize}
    Then there exists $\eta \in \{1,2, \cdots  n-1\}$ that has no common divisor with $n$ such that
    \[
    \langle T(a)(\eta j), T(b)(\eta j)\rangle= \langle \F(a)(j), \F(b)(j)\rangle~~\text{ or }\, \langle T(a)(\eta j), T(b)(\eta j)\rangle= \langle \F(b)(j), \F(a)(j)\rangle,~~j \in \Z_n, a,b \in L^1(\Z_n,H).
    \]
    % \begin{itemize}
    %     \item [-] either, for each $j \in \Z_n$, for each $a,b \in L^1(\Z_n,H)$, $\langle T(a)(\eta j), T(b)(\eta j)\rangle= \langle a(j), b(j)\rangle$,
    %     \item [-] or, for each $j \in \Z_n$, for each $a,b \in L^1(\Z_n,H)$, $\langle T(a)(\eta j), T(b)(\eta j)\rangle= \overline{\langle a(j), b(j)\rangle}$.
    % \end{itemize}
\end{theorem}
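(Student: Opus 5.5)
Following the proof of Theorem \ref{th: main th r} (and of Jaming's result on $L^1(\Z/n\Z)$), I would first reduce to an operator that preserves the structures rather than interchanging them. I would set $T=U\circ\F^{-1}$, suitably normalised so that the factor $\frac{1}{n^2}$ is absorbed. Using Theorem \ref{th: dis conv to pro}, its companion for $(a\times b)\smblksquare c$, and $\F^2 a(\ell)=na(-\ell)$, I would check that $T$ satisfies
\[
T((a\ast b)\mdsmblkcircle c)=(Ta\ast Tb)\mdsmblkcircle Tc,\qquad T((a\times b)\smblksquare c)=(Ta\times Tb)\smblksquare Tc,
\]
up to fixed powers of $n$. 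Since $\F$ preserves orthogonality to each $e_k$, $T$ also preserves it. The task then becomes showing that $\langle Ta(\eta j),Tb(\eta j)\rangle=\beta(\langle a(j),b(j)\rangle)$ with $\beta(z)=z$ or $\beta(z)=\bar z$.

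The main structural steps mirror Steps 1 through 8 of the continuous proof, with deltas in place of Dirac distributions.

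\textbf{Step A.} By (iii), $T(0)=0$, and $T(\mathbb{1}_k)=\alpha_k\mathbb{1}_k$. Applying $T$ to identities such as $(\mathbb{1}_k\times\mathbb{1}_k)\smblksquare\mathbb{1}_k=\mathbb{1}_k$ gives $|\alpha_k|=1$. Here the hypothesis that $T(\mathbb{1}_k)$ is constant is used.

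\textbf{Step B.} Show $T(\delta_j^k)=\gamma\,\delta_{\sigma(j)}^k$ for a bijection $\sigma$ of $\Z_n$, with $\sigma$ and $\gamma$ independent of $k$. Because $\delta_j^k\smblksquare$-multiplication against anything is supported at a single point, the argument of Step 3 goes through: if $T\delta_j^k$ had two points in its support, one would produce $g_y\times\widetilde{g_z}=0$ while its preimage is nonzero, a contradiction. This step is simpler than Step 3 because everything is a finite vector and no derivatives appear.

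\textbf{Step C.} From $(\delta_i^k\ast\delta_j^k)\mdsmblkcircle\delta_0^k=\delta_{i+j}^k$, deduce that $\sigma$ is additive on $\Z_n$. Hence $\sigma(j)=\eta j$, and bijectivity forces $\gcd(\eta,n)=1$. The coefficients $\gamma$ are shown to be $1$ as in Step 6, using $T\delta_0^k=\pm\delta_0^k$.

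The scalar part is handled as in Steps 7 and 8. First, the discrete analogues of \eqref{eq: u lin 3}--\eqref{eq: u lin 3.2} yield $\langle Ta(\eta j),Tb(\eta j)\rangle=\beta(\langle a(j),b(j)\rangle)$, where $\beta(c)=\langle T(ce_1)(0),e_1\rangle$ is computed on constants $\mathbb{1}_{ce_1}$. Next, $\beta$ is a multiplicative bijection of $\C$. Continuity of $\beta$ cannot come from smoothness here; instead it comes from the continuity hypothesis on $c\mapsto T(\mathbb{1}_{ce_k})$, exactly as in Jaming. This gives $\beta(c)=(c/|c|)^{\pm1}|c|^{\alpha}$.

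The main obstacle is pinning $\alpha=1$. Step 9 of the continuous proof used integration against a bump function, which is unavailable here. My first attempt would exploit the convolution identity on constants, $(\mathbb{1}_{ce_1}\ast\mathbb{1}_{de_1})\mdsmblkcircle\mathbb{1}_{e_1}=n^2\,c\bar d\,\mathbb{1}_{e_1}$, and apply $T$ to it. Since the left side transforms to $n^2$ times products of $\beta$-values while the right side transforms to $\beta(n^2c\bar d)$, this gives $\beta(n^2)=n^2$, i.e. $n^{2\alpha}=n^2$. Equivalently, one could use additivity of sums over $\Z_n$ encoded in the convolution against $\mathbb{1}$. Because $\alpha$ is real only up to the constraint $\operatorname{Re}\alpha>0$, I expect extra care to exclude a nonreal $\alpha$ as well. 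Once $\alpha=1$, $\beta(z)=z$ or $\beta(z)=\bar z$, which gives the two alternatives in the statement after composing back with $\F$.
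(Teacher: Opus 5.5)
The genuine gap is in Step C, where you use ``$T\delta_0^k=\pm\delta_0^k$'' as though it carried over from Step 4 of the continuous proof. On $\Z_n$ this has to be proved, and the natural argument (the paper's Step 3) gives less than you need. Transport the identity $(b\ast\delta_0^k)\mdsmblkcircle\delta_0^k=b$, valid for $b\perp e_l$ ($l\neq k$), and write $T\delta_0^k=c\,\delta_p^k$. This gives only $|c|^2\delta_{2p}^k=\delta_0^k$, that is, $|c|=1$ (unimodular, not $\pm1$) and $2p\equiv 0 \pmod n$. Your additivity argument then yields only $\sigma(j)=\eta j+p$.

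On $\R^n$, $2u(0)=0$ forces $u(0)=0$, but on $\Z_n$ this holds only for odd $n$; the paper's Step 3 overlooks this as well. For even $n$ the gap cannot be closed. Let $\tau a=a(\cdot-\frac n2)$. Both ternary operations $(a,b,c)\mapsto(a\times b)\smblksquare c$ and $(a,b,c)\mapsto(a\ast b)\mdsmblkcircle c$ commute with $\tau$ applied to all three arguments, precisely because $2\cdot\frac n2\equiv 0$. Moreover, $\tau$ is bijective, preserves orthogonality to every $e_k$, fixes all constants, and $\tau\delta_0^k=\delta_{n/2}^k$. So $\tau$ satisfies every hypothesis of Theorem \ref{T4.4}. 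Yet $\langle \tau a(0),\tau b(0)\rangle=\langle a(\frac n2),b(\frac n2)\rangle$, which is $0$ for $a=b=\delta_0^1$, while $\langle a(0),b(0)\rangle=1$. Correspondingly, $U=\tau\circ\F$ satisfies the hypotheses of the statement (with $T=U\circ\F^{-1}=\tau$) and violates its conclusion at $j=0$. So your argument needs $n$ odd, and even then you must supply the computation above rather than cite the continuous case.

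Two further points. First, the discrete analogues of \eqref{eq: u lin 3}--\eqref{eq: u lin 3.2} do not directly give the pointwise identity you want. Since $(a\times b)(\ell)=\langle a(\ell),b(-\ell)\rangle$, applying $T$ to $(a\times b)\smblksquare\delta_j^k$ gives $\langle Ta(\eta j),Tb(-\eta j)\rangle=\beta(\langle a(j),b(-j)\rangle)$. In the continuous proof the sign was removed using hypothesis (iv) of Theorem \ref{th: main th r 1}, which has no counterpart here. This is repairable. By the analogue of Step 2 applied to $T$ and $T^{-1}$, $T$ maps constants onto constants, say $T\mathbb{1}_v=\mathbb{1}_{\phi(v)}$ with $\phi$ onto $H$. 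Taking $b=\mathbb{1}_v$ gives $\langle Ta(\eta j)-\phi(a(j)),\phi(v)\rangle=0$ for all $v$, so $Ta(\eta j)=\phi(a(j))$, and the stated form follows.

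Second, your determination of $\alpha$ is a genuinely different route from the paper's. The paper uses $a_t=(e_1,\sqrt t\,e_1,0,\dots,0)$ together with $(a\ast\mathbb{1}_k)\mdsmblkcircle\delta_0^k=(\mathbb{1}_{E[a]}\times\mathbb{1}_k)\smblksquare\mathbb{1}_k$ to reach $(1+\sqrt t)^\alpha=1+t^{\alpha/2}$. If the reduced $T$ satisfies the two identities exactly, your route is shorter and already complete. Taking $c=d=1$ gives $\beta(n^2)=n^2$. Taking $c=1$ and using multiplicativity gives $\beta(\bar d)=\overline{\beta(d)}$, so $\alpha$ is real, and $n^{2\alpha}=n^2$ then forces $\alpha=1$.

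However, this step depends on the exact power of $n$, which you left open as ``up to fixed powers of $n$''. Since $T$ is not homogeneous, a factor $n^2$ cannot be moved through it. For example, $T=U\circ\F^{-1}$ satisfies only $T((a\ast b)\mdsmblkcircle c)=\frac{1}{n^2}(Ta\ast Tb)\mdsmblkcircle T(n^2c)$. Under that identity yours collapses to $\beta(n^2c\bar d)=\beta(n^2)\beta(c)\overline{\beta(d)}$, which yields only $\beta(\bar d)=\overline{\beta(d)}$ and says nothing about $\beta(n^2)$. So the reduction must be made exact before this step can be used; the paper also merely asserts it.
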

As in the case of functions on $\R^n$, it is enough to prove following 

\begin{theorem} \label{T4.4}
    Let $T: L^1(\Z_n, H) \to L^1(\Z_n,H)$ be a bijective transformation (not necessarily linear) such that the map $c\mapsto T(\mathbb{1}_{ce_k})$ is continuous, for any $c\in\C$ and  $T(\mathbb{1}_k)$ is a constant. Assume that
    for all $a,b,c \in L^1(\Z_n,H)$ 
    \begin{itemize}
        \item [(i)] $T((a\times b)\smblksquare c)= (T(a)\times T(b))\smblksquare T(c).$
        \item [(ii)] $T((a\ast b)\mdsmblkcircle c)=(T(a)\ast T(b))\mdsmblkcircle T(c).$
 		%\item[(iii)] $T(\mathbb{1}_k)$ is a constant. 
        \item[(iii)] If $a\perp e_k$ for some $k\in\N$, then $T(a)\perp e_k$.
        % \item[(iv)] The map $c\mapsto T(\mathbb{1}_{ce_k})$ is continuous, for any $c\in\C$ and  $T(\mathbb{1}_k)$ is a constant.
    \end{itemize}
    Then there exists $\eta \in \{1,2, \cdots  n-1\}$ that has no common divisor with $n$ such that
    \[
    \langle T(a)(\eta j), T(b)(\eta j)\rangle= \langle a(j), b(j)\rangle~~\text{ or }\, \langle T(a)(\eta j), T(b)(\eta j)\rangle= \langle b(j), a(j)\rangle,~~j \in \Z_n, a,b \in L^1(\Z_n,H).
    \]
    % \begin{itemize}
    %     \item [-] either, for each $j \in \Z_n$, for each $a,b \in L^1(\Z_n,H)$, $\langle T(a)(\eta j), T(b)(\eta j)\rangle= \langle a(j), b(j)\rangle$,
    %     \item [-] or, for each $j \in \Z_n$, for each $a,b \in L^1(\Z_n,H)$, $\langle T(a)(\eta j), T(b)(\eta j)\rangle= \overline{\langle a(j), b(j)\rangle}$.
    % \end{itemize}
\end{theorem}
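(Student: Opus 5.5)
The plan is to transcribe the continuous proof of Theorem \ref{th: main th r 1} into the finite setting, where there are no distributions: the Kronecker elements $\delta_j^k$ and the constants $\mathbb{1}_v$ are honest elements of $L^1(\Z_n,H)$. This removes all support and derivative arguments, but it forces us to pin down the nonlinear scalar map $\beta$ by hand.

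\textbf{Steps 1--2 (zero, constants, Kronecker elements).} First, $T(0)=0$ by (iii), since $0\perp e_k$ for every $k$. Next, by (iii) and the hypothesis that $T(\mathbb{1}_k)$ is constant, $T(\mathbb{1}_k)=\alpha_k\mathbb{1}_k$. We compute $(\mathbb{1}_k\times\mathbb{1}_k)\smblksquare\mathbb{1}_k=\mathbb{1}_k$ and apply (i) to get $|\alpha_k|^2\alpha_k=\alpha_k$, hence $|\alpha_k|=1$; as in the paper we normalise $\alpha_k=1$. The role of the singleton-support step is played by the product: $\delta_j^k$ is characterised among elements orthogonal to all $e_l$, $l\neq k$, by the property that $(\delta_j^k\times b)\smblksquare c$ is supported in $\{j\}$ for all $b,c$. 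Using (i) for both $T$ and $T^{-1}$ (the hypotheses are symmetric), $T(\delta_j^k)$ is then a scalar multiple of a Kronecker element, $T\delta_j^k=\gamma_j\delta^k_{u(j)}$. A disjoint-support argument copied from Step 3 shows that $u(j)$ does not depend on $k$. Injectivity of $T$ makes $u$ a bijection of $\Z_n$.

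\textbf{Step 3 (additivity of $u$ and the index $\eta$).} We use the identity $(\delta_i^k\ast\delta_j^k)\mdsmblkcircle\delta_0^k=\delta_{i+j}^k$. Applying (ii) gives $\gamma_{i+j}\delta^k_{u(i+j)}=\gamma_i\gamma_j\gamma_0\,\delta^k_{u(i)+u(j)+u(0)}$. Taking $i=j=0$ forces $u(0)=0$ and $\gamma_0^3=\gamma_0$, so $\gamma_0=\pm1$. It follows that $u$ is an additive bijection of $\Z_n$, hence $u(j)=\eta j$ with $\gcd(\eta,n)=1$. The equation $\gamma_{i+j}=\pm\gamma_i\gamma_j$ makes $\gamma$ a character up to sign. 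Then, exactly as in Step 6, we compare $T\big((\mathbb{1}_k\ast a)\mdsmblkcircle\delta_j^k\big)$ for varying $j$ and find that $\gamma_j$ is independent of $j$, so $T\delta_j^k=\delta^k_{\eta j}$ after normalisation.

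\textbf{Step 4 (inner products, and the main obstacle).} Next we reproduce \eqref{eq: u lin2}--\eqref{eq: u lin 3.2}, and hence \eqref{eq: beta cd Tcd}, verbatim with sums in place of integrals. Here $c,d\in H$ are read as $\mathbb{1}_c,\mathbb{1}_d$, and $\langle a(j),b(-j)\rangle\delta_j^k$ is written as $\big(\delta_j^k\times\widetilde{b}\big)\smblksquare$-type products. This yields
\[
\langle Ta(\eta j),Tb(\eta j)\rangle=\beta(\langle a(j),b(j)\rangle),\qquad \beta(c):=\langle T(\mathbb{1}_{ce_1})(0),e_1\rangle .
\]
As in Step 8, $\beta$ is a multiplicative bijection of $\C$. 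Its continuity now comes directly from the assumed continuity of $c\mapsto T(\mathbb{1}_{ce_k})$, rather than from smoothness. Hence $\beta(c)=(c/|c|)^{\pm1}|c|^{\alpha}$ with $\operatorname{Re}\alpha>0$.

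The hard point is to force $\alpha=1$, which in Step 9 relied on integrating Schwartz functions. I would instead use the constants. We have $\mathbb{1}_k\ast\mathbb{1}_k\equiv n$, so $(\mathbb{1}_k\ast\mathbb{1}_k)\mdsmblkcircle\mathbb{1}_k=n^2\mathbb{1}_k$. Applying (ii) with $T\mathbb{1}_k=\mathbb{1}_k$ gives $T(n^2\mathbb{1}_k)=n^2\mathbb{1}_k$, so $\beta(n^2)=n^2$, i.e.\ $n^{2\alpha}=n^2$. Since $\alpha$ could a priori be complex, this only gives $\alpha\in 1+\tfrac{\pi i}{\log n}\Z$; to kill the imaginary part I would also test $\mathbb{1}_{te_k}$ for real $t>0$. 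From $(\mathbb{1}_{te_k}\ast\mathbb{1}_k)\mdsmblkcircle\mathbb{1}_k=n^2t\,\mathbb{1}_k$ and multiplicativity one gets $\beta(n^2t)=n^2\beta(t)$ for all $t>0$ while $\beta$ is continuous, so $\alpha$ must be real and therefore equal to $1$. The two choices of exponent sign $\pm1$ then give the two alternatives in the statement.
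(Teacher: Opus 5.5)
Your route essentially coincides with the paper's until the last step, and the step where you depart from it has a gap. The relation $\beta(n^2t)=n^2\beta(t)$ is just multiplicativity combined with $\beta(n^2)=n^2$, so it adds nothing. Continuity also cannot exclude an imaginary part of $\alpha$: the map $\beta(c)=\frac{c}{|c|}\,|c|^{1+i\pi/\log n}$ (with $\beta(0)=0$) is a continuous multiplicative bijection of $\C$ satisfying both relations. What is missing is positivity of $\beta$ on $(0,\infty)$; the paper gets this from the line $t^{\operatorname{Re}\alpha}=\langle T(a_t)(j_1),T(a_t)(j_1)\rangle=t^{\alpha}$. You can get it directly. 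First record, as the paper does, that $T$ maps constants to constants, so that $T(\mathbb{1}_{ce_1})=\beta(c)\mathbb{1}_1$. Then hypothesis (i) applied to $(\mathbb{1}_{\sqrt t e_1}\times\mathbb{1}_{\sqrt t e_1})\smblksquare\mathbb{1}_1=\mathbb{1}_{te_1}$ gives $\beta(t)=|\beta(\sqrt t)|^2>0$. Hence $t^{\alpha}>0$ for all $t>0$, and $\alpha\in\R$. With this repair, $n^{2\alpha}=n^2$ does force $\alpha=1$. Your ending is then genuinely shorter than the paper's, which constructs $a_t$ and extracts the additive relation $(1+\sqrt t)^{\alpha}=1+t^{\alpha/2}$ from $E[a_t]$. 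You only use the counting-measure normalisation $\mathbb{1}_k\ast\mathbb{1}_k\equiv n$, much as Step 9 of the continuous proof uses $\int f=1$.

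Your Step 3 contains a second problem, which you share with the paper. Since $\ast$ is conjugate-linear in its second argument, (ii) gives $\gamma_{i+j}\delta^k_{u(i+j)}=\gamma_i\overline{\gamma_j}\gamma_0\,\delta^k_{u(i)+u(j)+u(0)}$. At $i=j=0$ this yields $|\gamma_0|=1$ and $u(0)=3u(0)$, i.e.\ only $2u(0)=0$ in $\Z_n$, which forces $u(0)=0$ only when $n$ is odd. The paper's Step 3 makes the same jump from $\delta^k_{2u(0)}=\delta^k_0$, and for even $n$ the gap cannot be closed. The map $T(a)(\ell)=a(\ell-\frac n2)$ is bijective, fixes every $\mathbb{1}_{ce_k}$, and satisfies (i)--(iii): because $2\cdot\frac n2\equiv 0$, one has $b(-\ell-\frac n2)=b(-\ell+\frac n2)$ and $T(a)\ast T(b)=a\ast b$. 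Yet $\langle (T\delta_0^1)(0),(T\delta_0^1)(0)\rangle=0\neq 1=\langle \delta_0^1(0),\delta_0^1(0)\rangle$, so the conclusion fails for every $\eta$. Thus your argument, like the paper's, proves the statement only for odd $n$.

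Finally, a verbatim transfer of Step 7 only gives $\langle Ta(\eta j),Tb(-\eta j)\rangle=\beta(\langle a(j),b(-j)\rangle)$. In the continuous proof the reflection was removed using hypothesis (iv), which is not assumed here. Taking $b=\mathbb{1}_m$ instead gives $\langle Ta(\eta j),e_m\rangle=\beta(\langle a(j),e_m\rangle)$ for all $m$. The stated identity follows from this once $\beta$ is known to be the identity or complex conjugation.
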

\vspace{10pt}

\begin{proof}
As in the case of $\R^n$ in the previous section, we prove the theorem in the following steps.

\noindent
%\begin{enumerate}
{\textbf{Step 1 : }}
In this step, we investigate the action of $T$ on $0$ and $\mathbb{1}_k$. Appealing to the hypothesis (iii), it is easy to see that $T(0)=0$.
\par By making use of the hypothesis (iii) and the continuity of $T$ it follows that $T(\mathbb{1}_k)(j)= ce_k$ for some $c\in\C$. Observe that, if $a\in L^1(\Z_n,H)$ is such that $a\perp e_l$, for all $l\neq k$, then $(a\times \mathbb{1}_k)\smblksquare \mathbb{1}_k= a$. Applying $T$ both sides we obtain, $(T(a)\times T(\mathbb{1}_k))\smblksquare T(\mathbb{1}_k)= T(a)$. Then appealing to the bijectivity of $T$ and the continuity of $T$, it follows that $(b\times T(\mathbb{1}_k))\smblksquare T(\mathbb{1}_k)=b$ for all $b\in L^1(\Z_n,H)$ with $b\perp e_l$ for all $l\neq k.$ Now, taking $b= \mathbb{1}_k$, an easy calculation shows that $|c|=1$. As in the case of $\R^n$, for simplicity, we proceed with $c=1.$

\vspace{10pt}

 \noindent
{\bf Step 2:} In this step we show that the Support of $T\delta_j^k$ is singleton. Suppose that $Supp(T\delta_j^k)$ contains two distinct points $m,\, l.$ Then $T(\delta_j^k)(r) \neq 0$ for $r= m,l$. Let $a=T^{-1}(\delta_m^k), b= T^{-1}(\delta_l^k)$. Then

\begin{align*}
    (a\times \mathbb{1}_k)\smblksquare \delta_j^k = (T^{-1}(\delta_m^k)\times T^{-1}(\mathbb{1}_k))\smblksquare T^{-1}(T(\delta_j^k))= T^{-1}((\delta_m^k\times \mathbb{1}_k)\smblksquare T(\delta_j^k))\neq T^{-1}(0) 
    = 0,
\end{align*}
since
\begin{align*}
    (\delta_m^k\times \mathbb{1}_k)\smblksquare T(\delta_j^k)(m)= \langle \delta_m^k(m), \mathbb{1}_k(m)\rangle T\delta_j^k(m)= \langle e_k, e_k \rangle T\delta_j^k(m)\neq 0.
\end{align*}
Hence, $(a\times \mathbb{1}_k)\smblksquare \delta_j^k \neq 0$. Similarly, $(b\times \mathbb{1}_k)\smblksquare \delta_j^k \neq 0$, which implies that $(a\times \mathbb{1}_k)\smblksquare b \neq 0.$
% \begin{itemize}
%     \item $(a\times \mathbb{1}_k)\smblksquare \delta_j^k \neq 0$
%     \item $(b\times \mathbb{1}_k)\smblksquare \delta_j^k \neq 0$
% \end{itemize}
%From these two, we can conclude that, $(a\times \mathbb{1}_k)\smblksquare b \neq 0.$

On the other hand, for $r\in\Z_n$, we have \begin{align*}
    (T(a)\times T(\mathbb{1}_k))\smblksquare T(b)(r) = (\delta_m^k \times \mathbb{1}_k)\smblksquare \delta_l^k(r)= \langle \delta_m^k(r), e_k \rangle \delta_l^k (r)= 0, 
    %&& \text{since, } m \neq l
\end{align*}
which implies that $(a\times \mathbb{1}_k)\smblksquare b=0$. This is a contradiction. Hence, $Supp(T\delta_j^k)$ is singleton.

\vspace{10pt}

\noindent
{\bf Step 3:} In this step, we find the image of $\delta_0^k$ under $T$. Since, $Supp (T \delta_0^k)$ is singleton, $T\delta_0^k= c\delta_{u(0)}^k$ for some $c\in \C$ and $u(0)\in \Z_n.$ We show that $|c|=1$ and $u(0)=0$. 

\par For $a\in L^1(\Z_n,H)$ with $a\perp e_l$ for all $l\neq k$, one can show that $\left( a\ast \delta_0^k\right)\mdsmblkcircle \delta_0^k= a$.
Applying $T$ on both sides we obtain, $\left(Ta\ast T\delta_0^k\right)\mdsmblkcircle T\delta_0^k= Ta$. Then by making use of the bijectivity of $T$ and the hypothesis (iii), it follows that $\left(b\ast T\delta_0^k\right)\mdsmblkcircle T\delta_0^k= b$ for all $b\in L^1(\Z_n,H)$ with $b\perp e_l$ for all $l\neq k.$ Now, taking $b=\delta_0^k$, we get $\left(\delta_0^k \ast c\delta_{u(0)}^k\right)\mdsmblkcircle c\delta_{u(0)}^k= \delta_0^k.$ Since, $\delta_0^k\perp e_\ell$, for all $\ell\neq k$, we have $|c|^2\delta_{2u(0)}^k= \delta_0^k$, which immediately gives $u(0)=0$ and $|c|=1$.
For simplicity, we proceed with $c=1.$

\vspace{10pt}

\noindent
{\bf Step 4:} In this step we show that there is an $\eta \in \{1, \cdots, n-1\}$ having no common divisor with $n$ such that $T(\delta_j^k)= \delta_{\eta j}^k$.

Appealing to the previous step, we have $T(\delta_j^k)= \delta_{\sigma(j)}^v$, for some $v \in H$, $\sigma(j) \in \{0,1, \cdots , n-1\}.$ Since, $\delta_j^k \perp e_l$ for all $l \neq k,$ our hypothesis implies that $T(\delta_j^k) \perp e_l$ for all $l \neq k.$ Hence $\langle \delta_{\sigma(j)}^v(x), e_l \rangle=0$ for all $l \neq k,$ which leads to $\langle v, e_l\rangle=0$ for all $l\neq k$. Hence $v=c_je_k$, for some $c_j\in\C$. Therefore we have, $T(\delta_j^k)= \delta_{\sigma(j)}^{c_je_k}= c_j \delta_{\sigma(j)}^k.$ We now show that $c_j=1.$ Observe that
\begin{align*}
    (\mathbb{1}_k\ast \delta_0^k)\mdsmblkcircle \delta_j^k(l)
    %&= \sum_{s=0}^{n-1}(\mathbb{1}_k\ast \mathbb{1}_k)(s)\delta_j^k(l-s)\\
    = \sum_{s=0}^{n-1}\left(\sum_{t=0}^{n-1}\langle \mathbb{1}_k(t), \delta_0^k(s-t)\rangle\right)\delta_j^k(l-s)
    = \sum_{s=0}^{n-1}  \delta_j^k(l-s)
    %&= n \sum_{s=0}^{n-1}\delta_j^k(l-s)\\
    = e_k
    = \mathbb{1}_k(l).
\end{align*}
Thus, 
%using the additivity of $T$, we get
\begin{align*}
    \mathbb{1}_k = T( \mathbb{1}_k)= T((\mathbb{1}_k\ast \delta_0^k)\mdsmblkcircle \delta_j^k)= (\mathbb{1}_k \ast \delta_0^k)\mdsmblkcircle \delta_{\sigma(j)}^k= c_j  \mathbb{1}_k.
\end{align*}
This shows that $c_j=1.$ Hence, $T(\delta_j^k)= \delta_{\sigma(j)}^k.$
Using the bijectivity of $T$, it follows that $\sigma(j)$ is a permutation on $\{1,\cdots ,n-1\}$.\\
In order to show that $\sigma(j)$ does not have a common divisor with n, consider
%Now, we claim that $(\delta_i^k\ast \delta_j^k)\mdsmblkcircle \delta_0^k= \delta_{i+j}^k.$
\begin{align*}
    (\delta_l^k\ast \delta_m^k)\mdsmblkcircle \delta_0^k(j) 
    %= \sum_{s=0}^{n-1}(\delta_i^k\ast \delta_j^k)(s)\delta_0^k(l-s)\\
    = \sum_{s=0}^{n-1}(\sum_{t=0}^{n-1}\langle \delta_l^k(t), \delta_m^k(s-t)\rangle)\delta_0^k(j-s)= \sum_{t=0}^{n-1}\langle \delta_l^k(t), \delta_m^k(j-t)\rangle e_k
    %&= \langle e_k, e_k \rangle e_k && \text{when} \, l=i+j,\\
    %&= 0 && \text{o/w}\\
    =\delta_{l+m}^k(j).
\end{align*}
Thus,
\begin{align*}
    \delta_{\sigma(l+m)}^k = T(\delta_{l+m}^k)= T((\delta_l^k\ast \delta_m^k)\mdsmblkcircle \delta_0^k)= (T(\delta_l^k)\ast T(\delta_m^k))\mdsmblkcircle T(\delta_0^k)= (\delta_{\sigma(l)}^k \ast \delta_{\sigma(m)}^k)\mdsmblkcircle \delta_0^k= \delta_{\sigma(l)+\sigma(m)}^k,
\end{align*}
shows that $\sigma(l+m)= \sigma(l)+\sigma(m).$ This implies that $\sigma(m)= m \sigma(1).$ Since, $\sigma$ is a permutation, appealing to B\`ezout's theorem we get that $\sigma(1)$ has no common divisor with $n$.

\vspace{10pt}

\noindent
{\bf Step 5:} Finally, in this step we obtain the desired characterization. First, we derive some useful identities. For $a \in L^1(\Z_n, H)$, we let $E[a]:= \sum_{j=0}^{n-1}a_j$. Then for $j\in \Z_n$,
    \begin{align*}
        (a\ast \mathbb{1}_k)\mdsmblkcircle \delta_0^k(j)%&= \sum_{s=0}^{n-1}(a\ast \mathbb{1}_k)(s)\mathbb{1}_k(i-s)\\
        =\sum_{s=0}^{n-1}\left(\sum_{t=0}^{n-1}\left\langle a(t), \mathbb{1}_k(s-t)\right\rangle\right)\delta_0^k(j-s)
        %&= \sum_{s=0}^{n-1}(\sum_{t=0}^{n-1}\langle a(t), e_k\rangle)e_k\\
        =\sum_{s=0}^{n-1}\left\langle \sum_{t=0}^{n-1} a(t), e_k \right\rangle e_k
        %&= \sum_{s=0}^{n-1}\langle E[a], e_k \rangle e_k\\
        &= \langle E[a], e_k \rangle e_k\\
        &= (\mathbb{1}_{E[a]}\times \mathbb{1}_k)\smblksquare \mathbb{1}_k(j).
    \end{align*}
    Hence $(a\ast \mathbb{1}_k)\mdsmblkcircle \delta_0^k= (\mathbb{1}_{E[a]}\times \mathbb{1}_k)\smblksquare \mathbb{1}_k.$ Now applying $T$ on both the sides we obtain, $(Ta\ast \mathbb{1}_k)\mdsmblkcircle \delta_0^k= \left(T(\mathbb{1}_{E[a]})\times \mathbb{1}_k\right)\smblksquare \mathbb{1}_k$, which leads to %$n(T(\mathbb{1}_{E[Ta]})\times \mathbb{1}_k)\smblksquare \mathbb{1}_k= n(T(\mathbb{1}_{E[a]})\times \mathbb{1}_k)\smblksquare \mathbb{1}_k$. So we have 
    $$(\mathbb{1}_{E[Ta]}\times \mathbb{1}_k)\smblksquare \mathbb{1}_k= (T(\mathbb{1}_{E[a]})\times \mathbb{1}_k)\smblksquare \mathbb{1}_k.$$

    We now define the map $\beta:\C\to\C$, by $\beta(c):= \left\langle E\left[T\left((c/n)\mathbb{1}_k\right)\right], e_k \right\rangle$,\, $c \in \C.$ Then, for $j\in\Z_n,$
    \begin{align*}
        \beta(c)\mathbb{1}_k(j) 
        %&= \langle E[T(c/n)\mathbb{1}_k], e_k \rangle e_k \\
        &= \left\langle \mathbb{1}_{E\left[T\left((c/n) \mathbb{1}_k\right)\right]}(j), \mathbb{1}_k(j)\right\rangle \mathbb{1}_k(j)
        = (\mathbb{1}_{E[T(c/n) \mathbb{1}_k]}\times \mathbb{1}_k)\smblksquare \mathbb{1}_k(j)
        = (T((\mathbb{1}_{E[(c/n)\mathbb{1}_k]})\times \mathbb{1}_k)\smblksquare \mathbb{1}_k)(j)
        = T(\mathbb{1}_{ce_k})(j).
    \end{align*}
    %So, $\beta(c)\mathbb{1}_k(j)= T(\mathbb{1}_{ce_k})(j)$. 
    Hence we have the following identity,
    $$\beta(c)\mathbb{1}_k= T(\mathbb{1}_{c e_k}).$$

    Moreover, for $u,v\in H$, we have 
    \begin{align*}
        \beta(\langle u, v \rangle)\mathbb{1}_k = T(\mathbb{1}_{\langle u,v \rangle e_k})= T((\mathbb{1}_u\times \mathbb{1}_v)\smblksquare \mathbb{1}_k)= (T(\mathbb{1}_u)\times T(\mathbb{1}_v))\smblksquare \mathbb{1}_k.
    \end{align*}
    So, $$\beta(\langle u, v \rangle)= \langle T(\mathbb{1}_u)(j), T(\mathbb{1}_v)(j)\rangle,$$ for any $j \in \Z_n.$ Now, for $a, b \in L^1(\Z_n, H)$ and $j \in \Z_n$, we have
\begin{align*}
    \langle T(a)(\eta j), T(b)(\eta j) \rangle\delta_{\eta j}^k &= (T(a)\times T(b))\smblksquare \delta_{\eta j}^k\\
        &= (T(a)\times T(b))\smblksquare T(\delta_j^k)\\
        &= T((a\times b)\smblksquare \delta_ j^k)\\
        &= T(\langle a(j), b(j)\rangle \delta_j^k)\\
        &= T((\mathbb{1}_{a(j)}\times \mathbb{1}_{b(j)})\smblksquare \delta_j^k)\\
        %&= (T(\mathbb{1}_{a(j)})\times T(\mathbb{1}_{b(j)})\smblksquare T\delta_j^k\\
        &= \langle T(\mathbb{1}_{a(j)})(\eta j), T(\mathbb{1}_{b(j)})(\eta j)\rangle \delta_{\eta j}^k\\
        &= \beta(\langle a(j), b(j)\rangle)\delta_{\eta j}^k.
    \end{align*}
    Hence, 
    \begin{equation}\label{eq: zn form}
    \langle T(a)(\eta j), T(b)(\eta j)\rangle = \beta(\langle a(j), b(j)\rangle).
    \end{equation}
    Let the permutation $j\mapsto \eta j$ be denoted by $\sigma$. Then we rewrite the above equality as
\begin{equation}\label{eq: zn form 1}
\langle T(a)(j), T(b)(j)\rangle= \beta\left(\left\langle a(\sigma^{-1}(j)), b(\sigma^{-1}(j))\right\rangle\right).
\end{equation}
As in the case of $\R^n$, we show that $\beta$ is a continuous multiplicative bijection. The continuity of $\beta$ follows from the hypothesis (v).

    % 
    %     \langle T(a)(j), T(b)(j)\rangle= \beta\left(\left\langle a\left(\sigma^{-1}\left(j\right)\right), b\left(\sigma^{-1}\left(j\right)\right)\right\rangle\right).
    % \end{equation}
    
    %where $T(\delta_i^k)= \delta_j^k.$

    % As in the continuous case, the result follows once we show that
    %     $\beta$ is a additive and
    %      multiplicative
    %     bijection which maps $\R$ onto $\R$.
    %\end{enumerate}
For $c_1,c_2\in\C$, we have
    \begin{align*}
        \beta(c_1 c_2)\mathbb{1}_k = T(\mathbb{1}_{c_1 c_2 e_k})= T((\mathbb{1}_{c_1 e_k}\times \mathbb{1}_k)\smblksquare \mathbb{1}_{c_2 e_k})= (T(\mathbb{1}_{c_1 e_k})\times \mathbb{1}_k)\smblksquare T(\mathbb{1}_{c_2 e_k})
        &= (\beta(c_1)\mathbb{1}_k\times \mathbb{1}_k)\smblksquare \beta(c_2)\mathbb{1}_k\\
        %&= \beta(c_1)\beta(c_2)(\mathbb{1}_k\times \mathbb{1}_k)\smblksquare \mathbb{1}_k
        &= \beta(c_1)\beta(c_2)\mathbb{1}_k,
    \end{align*}
    which shows that $\beta$ is multiplicative. 
 %    The additivity of $\beta$ follows from

 % \begin{align*}
 %        \beta(c_1+c_2)\mathbb{1}_k = T(\mathbb{1}_{(c_1+c_2)e_k})= T(\mathbb{1}_{c_1e_k}+\mathbb{1}_{c_2e_k})= T(\mathbb{1}_{c_1e_k})+T(\mathbb{1}_{c_2e_k})= \beta(c_1)\mathbb{1}_k +\beta(c_2)\mathbb{1}_k= (\beta(c_1)+\beta(c_2))\mathbb{1}_k.
 %    \end{align*}
    %Hence, $\beta(c_1+c_2)= \beta(c_1)+\beta(c_2).$

%\vspace{10pt}
    
    %\textbf{$\beta$ is injective: }
    %Let, $\beta(c_1)= \beta(c_2)$, implies, $\beta(c_1)\mathbb{1}_k= \beta(c_2)\mathbb{1}_k$, implies, $T(\mathbb{1}_{c_1e_k})= T(\mathbb{1}_{c_2e_k})$, implies, $\mathbb{1}_{c_1 e_k}= \mathbb{1}_{c_2 e_k},$ implies, $c_1= c_2.$
    Using the injectivity of $T$, it is easy to see that $\beta$ is injective.
    %\textbf{$\beta$ is onto: }
    %Let $d \in \C.$ This implies $\mathbb{1}_{de_k}\in L^1(\Z_n, H)$. Now as $T$ is onto, so there exist $a \in L^1(\Z_n, H)$ such that $T(a)= \mathbb{1}_{de_k}.$ Now if we can show that $a= \mathbb{1}_{ce_k}$ for some $c\in \C$, then we are done. Now $T^{-1}(\mathbb{1}_{de_k})=a.$ $T^{-1}$ satisfies same property with $T$. Now $\mathbb{1}_{de_k} \perp e_l$ for all $\ \neq k,$ implies, $T^{-1}(\mathbb{1}_{de_k}) \perp e_l$ for all $l \neq k$, implies, $\langle a(j), e_l \rangle = 0$ for all $ l \neq k$, for all $j \in \Z_n,$ implies $a(j)= c_je_k$. Now we want to show that $a(j)=ce_k$. For that, our claim is that $T$ maps constant function to constant function, So, $T^{-1}$ will satisfy the same.
In order to show that $\beta$ is onto, first we show that $T$ maps constants to constants. For $v \in H$, we shall show that $T(\mathbb{1}_v)= \mathbb{1}_w,$ for some $w \in H.$ Let $g_v=\frac{1}{n^2}\mathbb{1}_v.$ Then for $i\in\Z_n$, we have
    \begin{align*}
        (\mathbb{1}_k\ast \mathbb{1}_k)\mdsmblkcircle g_v(j) 
        %&= \sum_{s=0}^{n-1}(\mathbb{1}_k\ast f_k)(s)g_v(i-s)\\
        &=  \sum_{s=0}^{n-1}\left(\sum_{t=0}^{n-1}\langle \mathbb{1}_k(t), \mathbb{1}_k(s-t)\rangle\right)g_v(j-s)
        = \sum_{s=0}^{n-1}\sum_{t=0}^{n-1}\langle e_k, e_k \rangle g_v(j-s)
        %&=  \sum_{s=0}^{n-1} (n.\frac{1}{n})\frac{1}{n}v\\
        = v
        = \mathbb{1}_v(j).
    \end{align*}
%So, we have $(\mathbb{1}_k\ast f_k)\mdsmblkcircle g_v= \mathbb{1}_v$. Now applying $T$ on both sides we have, 
Thus, $T(\mathbb{1}_v)= (\mathbb{1}_k\ast \mathbb{1}_k)\mdsmblkcircle T(g_v)$. Moreover,
    \begin{align*}
        (\mathbb{1}_k\ast \mathbb{1}_k)\mdsmblkcircle T(g_v)(j) %&= \sum_{s=0}^{n-1} (\mathbb{1}_k\ast Tf_k)(s)Tg_v(i-s)\\
        =\sum_{s=0}^{n-1}\sum_{t=0}^{n-1}\langle e_k, e_k\rangle Tg_v(j-s)
        %&= \sum_{s=0}^{n-1}(\sum_{t=0}^{n-1}\langle e_k, c_{s-t}e_k\rangle)Tg_v(i-s)\\
        %= \sum_{s=0}^{n-1}(\sum_{t=0}^{n-1}\overline{c_{s-t}})Tg_v(i-s)
        = n\sum_{s=0}^{n-1}Tg_v(j-s)   %&& \text{ where, } c= \sum_{t=0}^{n-1}\overline{c_{s-t}} \\
        = w,~\text{ a constant}.
        %&& \text{which is a constant}
    \end{align*}
%Hence we have, $T(\mathbb{1}_v)= \mathbb{1}_w.$
This shows that $T$ maps a constant function to a constant function. Similarly, we can show that $T^{-1}$ maps constant to constant. Let $d\in \C$. Then using ontoness of $T$, we have $a\in L^1(\Z_n,H)$ such that $T(a)=\mathbb{1}_{de_k}$. This is same as $T^{-1}(\mathbb{1}_{de_k})=a$. Since $T^{-1}$ maps constants to constants, $a=\mathbb{1}_v$, for some $v\in H$. Since, $\mathbb{1}_{de_k}\perp e_l$, for $l\neq k$, using our hypothesis, it follows that $a=\mathbb{1}_v=\mathbb{1}_{ce_k}$, for some $c\in\C$. Then it is easy to see that $\beta(c)=d$.
\par 
The above properties imply that $\beta$ has the following form.
$$\beta(c)=\left( \frac{c}{|c|}\right)^k|c|^\alpha,~~\, k \in \{\pm 1\},\, Re(\alpha)>0.$$
In order to obtain the desired equality, it is enough to show that $\alpha=1$.
\par By making use of \eqref{eq: zn form 1}, we have 
\begin{equation}\label{eq: disc T form}
\langle T(a)(j), T(b)(j)\rangle = \left(\frac{\left\langle a\left(\sigma^{-1}\left(j\right)\right), b\left(\sigma^{-1}\left(j\right)\right)\right\rangle}{\left|\left\langle a\left(\sigma^{-1}\left(j\right)\right), b\left(\sigma^{-1}\left(j\right)\right)\right\rangle\right|}\right)^k\left|\left\langle a\left(\sigma^{-1}\left(j\right)\right), b\left(\sigma^{-1}\left(j\right)\right)\right\rangle\right|^\alpha,
\end{equation}
for all $a,b\in L^1(\Z_n,H)$. For $t>0$, set $a_t= (e_1,\sqrt{t}e_1,0, \cdots  ,0)$. Then, using the hypothesis (iii), it follows that $T(a)(j)=c_je_1$ for some $c_j\in\C$. Let $\sigma^{-1}(j_0)=0$ and $\sigma^{-1}(j_1)=1$. Now, appealing to \eqref{eq: disc T form}, we get 
$$\left\langle T(a_t)(j), T(\mathbb{1}_k)(j)\right\rangle = \left(\frac{\left\langle a_t\left(\sigma^{-1}\left(j\right)\right), \mathbb{1}_k\right\rangle}{|\left\langle a_t\left(\sigma^{-1}\left(j\right)\right), \mathbb{1}_k\right\rangle|}\right)^k\left|\left\langle a_t\left(\sigma^{-1}\left(j\right)\right), \mathbb{1}_k\right\rangle\right|^\alpha.$$
It follows from the above equality that $c_{j_0}=1$, $c_{j_1}=t^{\alpha/2}$ and $c_j=0$ for $j\neq j_0,\,j_1$. Observe that $t^{Re(\alpha)}=\langle T(a_t)(j_1),T(a_t)(j_1)\rangle=\langle a_t(1),a_t(1)\rangle^\alpha=t^\alpha$, which forces $\alpha$ to be real. Further, by putting $a=b=\mathbb{1}_{E[a_t]}$ in \eqref{eq: disc T form}, we get $\left\|E[T(a_t)]\right\|^2= \left\|E[a_t]\right\|^{2\alpha}$, which is same as $\left(1+\sqrt{t}\right)^\alpha= 1+\left(\sqrt{t}\right)^\alpha$. Since the last equality holds for all $t>0$, we get $\alpha=1$. This completes the proof of Theorem \ref{T4.4}.

\end{proof}

We have the following corollary in this context.
\begin{corollary}
    If in addition to the hypothesis of Theorem \ref{th: main discrete}, U satisfies $U(b_k)=b_k$, for all $k\in\N$, where 
    %{\color{red}\st{$a_k(j)= e_k$ for all $j=0,1,\cdots,n$ and}} 
    $b_k=\mathbb{1}_k+\F \mathbb{1}_k$, then
    \begin{itemize}
        \item [(i)] either, $U a(\eta j)= \F a(j)$ for all $a \in L^1(\Z_n,H)$ and $j\in \{0,1,\cdots , n-1\}.$ 
        \item [(ii)] or, $U a(\eta j)= \displaystyle\sum_k \overline{\langle \F ( a)(j), e_k\rangle}e_k$ for all $a\in L^1(\Z_n,H)$ and $j\in \{0,1,\cdots ,n-1\}$.
    \end{itemize}

\end{corollary}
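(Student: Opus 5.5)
Throughout, write $W=\F^{-1}\circ U$; the statement is then about $W$. The plan is to determine $W$ completely from Theorem \ref{th: main discrete} (equivalently Theorem \ref{T4.4} applied to $W$) together with the normalization $U(b_k)=b_k$.

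\textbf{Step 1: inner products are preserved up to $\beta$.} By Theorem \ref{th: main discrete} and the proof of Theorem \ref{T4.4}, $W$ satisfies (after the simplifying choices made there, which the normalization must now justify)
\[
\langle W(a)(\eta j), W(b)(\eta j)\rangle=\beta(\langle a(j),b(j)\rangle),
\]
with $\beta(c)=c$ or $\beta(c)=\bar c$. Moreover $W(\delta_j^k)=c\,\delta_{\eta j}^k$ and $W(\mathbb{1}_k)=c'\mathbb{1}_k$, where $|c|=|c'|=1$ are the phases that were set to $1$ in Steps 1 and 3 of that proof.

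\textbf{Step 2: use $U(b_k)=b_k$ to fix the phases.} I would compute $W(b_k)=\F^{-1}(b_k)$ directly. Since $\F\mathbb{1}_k=n\delta_0^k$ and $\F^{-1}\delta_0^k=\frac1n\mathbb{1}_k$, this gives $\F^{-1}b_k=\frac1n\mathbb{1}_k+\delta_0^k$. On the other hand, $b_k=\mathbb{1}_k+n\delta_0^k$ equals $\mathbb{1}_k+\delta_0^k$ at $0$ and $\mathbb{1}_k$ elsewhere. From $\langle W(b_k)(\eta j),W(\mathbb{1}_k)(\eta j)\rangle=\beta(\langle b_k(j),e_k\rangle)$ and $\langle W(b_k)(\eta j),W(\delta_0^k)(\eta j)\rangle=\beta(\langle b_k(j),\delta_0^k(j)\rangle)$, and since $W(b_k)$ is supported along $e_k$ by (iii), we get equations linking $c'$, $c$ and the known values of $\F^{-1}b_k$. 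Comparing the values at $j=0$ and at $j\neq0$ should force $c=c'=1$; here one uses $n\ge2$, and real positivity of the entries removes the conjugation ambiguity in these particular identities.

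\textbf{Step 3: recover $W$ pointwise.} With phases fixed, take $b=\delta_j^k$ in the identity of Step 1. This gives
\[
\langle W(a)(\eta j),e_k\rangle=\beta(\langle a(j),e_k\rangle)
\]
for every $k$. Expanding in the basis, $W(a)(\eta j)=a(j)$ when $\beta=\mathrm{id}$, and $W(a)(\eta j)=\sum_k\overline{\langle a(j),e_k\rangle}e_k$ when $\beta$ is conjugation. Since $U=\F\circ W$, we can translate back via $W=\F^{-1}U$, i.e. apply the same reasoning to $U$ with $T=U\circ\F$ as in the proof of Theorem \ref{th: main th r}. This yields (i) $Ua(\eta j)=\F a(j)$, or (ii) its coordinatewise conjugate, after the relabeling of $\eta$ that Theorem \ref{th: main discrete} already incorporates.

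\textbf{Main obstacle.} The delicate point is Step 2: the phases $c,c'$ were silently normalized in the proof, and the hypothesis $U(b_k)=b_k$ must pin both down simultaneously with the choice of $\beta$. I would first check carefully that this single normalization suffices for each $k$ independently. I would also check that the $\F$ versus $\F^{-1}$ placement is consistent with where $\eta$ acts, possibly replacing $\eta$ by $-\eta$, which is still coprime to $n$.
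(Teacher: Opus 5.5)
Your Step 2 is where the argument breaks. It breaks for a substantive reason, not just because it stops at ``should force''.

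First, the arithmetic needs fixing. $\F^{-1}b_k=\F^{-1}\mathbb{1}_k+\F^{-1}\F\mathbb{1}_k=\delta_0^k+\mathbb{1}_k$, not $\frac1n\mathbb{1}_k+\delta_0^k$. On the other side, $b_k=\mathbb{1}_k+n\delta_0^k$ has $b_k(0)=(n+1)e_k$ and $b_k(j)=e_k$ for $j\neq0$.

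Now carry out the comparison you propose, using the identity of Step 1 for $W=\F^{-1}\circ U$. Pairing $W(b_k)=\mathbb{1}_k+\delta_0^k$ with $W(\mathbb{1}_k)=c'\mathbb{1}_k$ gives $\overline{c'}=\beta(1)=1$ at $j\neq0$, but $2\,\overline{c'}=\beta(n+1)=n+1$ at $j=0$. Pairing with $W(\delta_0^k)=c\,\delta_0^k$ at $j=0$ gives $2\bar c=n+1$, which is incompatible with $|c|=1$. Most simply, taking $a=b=b_k$ in Step 1 forces $\|W(b_k)(0)\|=\|b_k(0)\|$, i.e. $2=n+1$. Your own value $\frac1n\mathbb{1}_k+\delta_0^k$ would instead give $|c'|=n$, again a contradiction. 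So for $n\ge2$ the normalization does not determine the phases; it contradicts Step 1.

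The source of the problem is the statement itself. With the paper's unnormalized transform, $\F^2a(\ell)=n\,a(-\ell)$, hence $\F b_k=n\delta_0^k+n\mathbb{1}_k\neq b_k$. So $\F$ itself violates $U(b_k)=b_k$. Substituting $a=b=b_k$ into the conclusion of Theorem~\ref{th: main discrete} (with $U$ in place of $T$) gives $\|b_k(\eta j)\|=\|\F b_k(j)\|$, i.e. $1=n$ for $j\neq0$. Thus no $U$ satisfies the hypotheses, and the corollary is true only vacuously.

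The paper gives no proof of this corollary. The intended argument, mirroring Corollary~\ref{cor: main th cor}, would substitute the normalized element $b=b_k$ directly into the identity of Theorem~\ref{th: main discrete} to read off the $k$-th coordinate of $Ua(\eta j)$. It runs into the same factor $n$.

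A non-vacuous statement needs a different normalization. One option is $U(b_k)=\F b_k$, so that $W(b_k)=b_k$. Another is the unitary transform $n^{-1/2}\F$, for which $\F^2$ is the reflection and $b_k$ really is fixed (with the constants in (i), (ii) adjusted). Under such a normalization your Steps 2--3 do work: the values at $j\neq0$ and $j=0$ give $c'=1$ and $c=1$, and then $b=\delta_j^k$ recovers $W(a)(\eta j)$ coordinatewise.

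One last correction. From $Wa=a(\eta^{-1}\,\cdot)$ you get $Ua(m)=\F a(\eta m)$. So the relabeling is $\eta\mapsto\eta^{-1}$ modulo $n$ in case (i), and $\eta\mapsto-\eta^{-1}$ in case (ii), not $-\eta$.
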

% \begin{remark}
% {\color{blue} The structure of the proof presented here closely follows that of \cite{Alesker_paper}, with significant adjustments, as we are working with two different convolutions and omitting technical details that are irrelevant in the context of finite groups. The principal distinction lies in the necessity of assuming that the operator TT acts continuously on constant functions, whereas \cite{Alesker_paper} assumes that $T$ maps smooth functions to smooth functions.

% It is also worth emphasizing that hypotheses (i) and (ii) are invoked solely in cases where either $a$ or $b$ is a constant $c_1$ or a Dirac measure.}
% \end{remark}

\subsection*{Remarks}
In \cite{Hasan1}, the authors have shown that $\mathcal{L}^2(\R,\mathcal{B})$, containing all $\mathcal{B}$-valued square (Bochner) integrable functions on $\R$ is a Hilbert $C^*$-module, where $\mathcal{B}$ is a Hilbert $C^*$-module. Hilbert $C^*$-modules generalizes the notion of a Hilbert space by allowing the inner product to take values in more general $C^*$-algebras rather than that of the complex numbers. They have shown that the Fourier transform is a unitary operator on $\mathcal{L}^2(\R,\mathcal{B})$. Moreover, viewing $\mathcal{L}^2(\R,\mathcal{B})$ as the tensor product of $L^2(\R)$ and $\mathcal{B}$, the authors have shown that an operator of the form $S\otimes I$ is the Fourier transform on $\mathcal{L}^2(\R,\mathcal{B})$ under some additional hypotheses.  
It is known that the Fourier transform maps $S_{\mathcal{B}}$ into $S_{\mathcal{B}}$ (see Proposition 2.4.22 in \cite{VVFT}), where $\mathcal{B}$ is a Banach space and $S_\mathcal{B}$ denotes the space of all $\mathcal{B}$-valued Schwartz class functions. So, it is natural to ask for a characterization of the Fourier transform on $S_{\mathcal{B}}$, where $\mathcal{B}$ is a Hilbert $C^*$-module or more generally a Banach space. 

\medskip
%\smallskip
%\bigskip
\noindent{\bf Acknowledgements.} 
This work is a part of first author's Ph. D. thesis. He thanks the National Institute of Science Education and Research Bhubaneswar, India, for providing an excellent research facility. This work started at NISER Bhubaneswar, India, during the second author's postdoctoral tenure. The second author is thankful to the National Board for Higher Mathematics (NBHM) under the Department of Atomic Energy (DAE), Govt. of India for the postdoctoral fellowship with reference number 0204/27/(14)/2023/R\&D-II/11891.
RM acknowledges the partial support provided by the research grants  (ANRF/ARGM/2025/000750/MTR).
%RM acknowledges the partial support provided by the research grants (DST/INSPIRE/04/2019/001914).

%\bigskip

\medskip

\bibliography{ref12}
\bibliographystyle{amsplain}
\end{document}